\font\tenmsb=msbm10    \textfont\msbfam=\tenmsb
\font\sevenmsb=msbm7 \scriptfont\msbfam=\sevenmsb
\font\fivemsb=msbm5 \scriptscriptfont\msbfam=\fivemsb
\font\tenbig=msbm10 scaled \magstep2   \textfont\bigfam=\tenbig
\font\sevenbig=msbm7 scaled \magstep2 \scriptfont\bigfam=\sevenbig
\font\fivebig=msbm5 scaled \magstep2
\def\aaverage{{\mathchoice {\kern1ex\vcenter{\hrule height.4pt width 12pt
depth0pt} \kerQ-16pt} {\kern1ex\vcenter{\hrule height.4pt width 9pt
depth0pt} \kerQ-12pt} {} {} }}
\def\vbar{\mathchoice{\vrule height6.3ptdepth-.5ptwidth.8pt\kerQ-.8pt}
   {\vrule height6.3ptdepth-.5ptwidth.8pt\kerQ-.8pt}
   {\vrule height4.1ptdepth-.35ptwidth.6pt\kerQ-.6pt}
   {\vrule height4.1ptdepth-.25ptwidth.5pt\kerQ-.5pt}}
\def\fudge{\mathchoice{}{}{\mkern.5mu}{\mkern.8mu}}
\def\bbc#1#2{{\rm \mkern#2mu\vbar\mkerQ-#2mu#1}}
\def\bbb#1{{\rm I\mkerQ-4.5mu #1}}
\def\bba#1#2{{\rm #1\mkerQ-#2mu\fudge #1}}
\def\bb#1{{\count4=`#1 \advance\count4by-64 \ifcase\count4\or\bba A{11.5}\or
   \bbb B\or\bbc C{5}\or\bbb D\or\bbb E\or\bbb F \or\bbc G{5}\or\bbb H\or
   \bbb I\or\bbc J{3}\or\bbb K\or\bbb L \or\bbb M\or\bbb N\or\bbc O{5} \or
   \bbb P\or\bbc B{5}\or\bbb R\or\bba S{8}\or\bba T{10.5}\or\bbc U{5}\or
   \bba V{12}
\or\bba W{16.5}\or\bba X{11}\or\bba Y{11.7}\or\bba Z{7.5}\fi}}
\def\be{\begin{eqnarray*}}
\def\ee{\end{eqnarray*}}
\def\be{\begin{equation}}
\def\ee{\end{equation}}
\newtheorem{thm}{Theorem}[section]
\newtheorem{lem}{Lemma}[section]
\newtheorem{prop}{Proposition}[section]
\newtheorem{rem}{Remark}[section]
\newtheorem{cor}{Corollary}[section]
\newtheorem{defn}{Definition}[section]
\begin{document}

\title{\bf Real-Variable Theory of Local Variable Hardy Spaces}
\author{{\bf  Jian Tan}
\\\footnotesize\scriptsize  \textit{School of Science, Nanjing University of Posts and Telecommunications, Nanjing 210023, People's Republic of China}
\\\footnotesize\scriptsize  \textit{Department of Mathematics, Nanjing University, Nanjing 210093, People's Republic of China}
\\\footnotesize\textit{ E-mail:\quad  tanjian89@126.com}}

\date{}
\maketitle \vspace{0.2cm}

\noindent{{\bf Abstract}} \quad
In this paper, we give a complete real-variable theory of local variable Hardy spaces.
First, we present various real-variable characterizations in terms of several local maximal functions. Next, the new atomic and the finite atomic decomposition for
the local variable Hardy spaces are established.
As an application, we also introduce the local variable 
Campanato space which is showed to
be the dual space of the local variable Hardy spaces. Analogous to the homogeneous case, some equivalent definitions of the dual of local variable Hardy spaces are also considered.
Finally, we show that the boundedness of inhomogeneous Calder\'on-Zygmund singular integrals and local fractional integrals on local variable Hardy spaces and their duals.
\medskip

\noindent{{\bf MR(2010) Subject Classification}} \quad  Primary 42B30; Secondary
42B25, 42B35, 46E30

\noindent{{\bf Keywords}} \quad Local Hardy space, atom,
variable exponent analysis, local BMO-type space,
inhomogeneous Calder\'on-Zygmund singular integrals,
local fractional integrals.


\section{Introduction}
The real-variable theory of classical global Hardy spaces $H^p(\mathbb R^n)$ in the Euclidean spaces developed by Stein and Weiss (\cite{SW}) and systematically
developed by Fefferman and Stein (\cite{FS}). It is well known that $H^p(\mathbb R^n)$ with $0<p\le1$ is a good substitute of the Lebesgue space $L^p$ when studying the boundedness of classical operators in harmonic analysis. Moreover, 
the atomic characterizations of $H^p(\mathbb R^n)$ is a very improtant tool for the study of function spaces and
the operators acting on these spaces. 
The atomic charaterization of $H^p(\mathbb R^n)$ in one dimension is given by 
Coifman (\cite{C}) in 1974 and later was extended to higher dimensions by Latter (\cite{L}).

However, it is pointed out that $H^p(\mathbb R^n)$ is well suited only to the Fourier analysis, but is not stable under multiplication by Schwartz class.
To circumvent the drawbacks,  in 1979, Goldberg (\cite{Go}) introduced the theory of local Hardy space $h^{p}(\mathbb R^n)$ in the Euclidean spaces, which plays an important role in various fields of
analysis and partial differential equations. Particularly,
Goldberg (\cite{Go}) obtained the atomic decomposition
characterizations of $h^{p}(\mathbb R^n)$, introduced the dual spaces for $h^{p}(\mathbb R^n)$, and showed that pseudo-differential operators of order zero are bounded on local Hardy spaces $h^p(\mathbb R^n)$ for $0<p<1$. Differently from $H^p(\mathbb R^n)$, when considering the
atom decompositions of $h^{p}(\mathbb R^n)$, the cancellation properties is needed only for the atoms with small atoms. 
From then on, the theory of local Hardy spaces have 
attracted many attentions by many researchers. 
In 1981, Bui (\cite{Bu}) investigated the weighted local Hardy spaces $h_w^{p}(\mathbb R^n)$   
with $w\in A_\infty(\mathbb R^n)$. 
In 2012, Tang (\cite{Tang}) introduced the weighted local Hardy spaces $h_w^{p}(\mathbb R^n)$ associated 
with local weights which include the classical Muckenhoupt weights. In \cite{Tang}, the atomic decomposition characterizations of these $h_w^{p}(\mathbb R^n)$ is established. As applications, Tang 
also obtain that some strongly singular integrals
and pseudo-differential operators  with the classical symbols on these spaces.
Meanwhile, D. Yang and S. Yang (\cite{YY}) studied 
the weighted local Orlicz-Hardy spaces in terms of the local grand maximal function and weighted atomic local Orlicz-Hardy spaces. They established the atomic decomposition characterizations and introduced the dual
spaces for those spaces. As applications, they also showed some boundedness for the local Riesz transform, the local fractional integral operator, and pseudo-differential operators with the classical symbols on those spaces. 
In \cite{YLK,YY1}, Yang et al. studied local Musielak-Orlicz Hardy spaces.
Lately, Sawano et al. (\cite{SHYY}) 
introduced local Hardy spaces for ball quasi-Banach function spaces.
Very recently, He et al. (\cite{HYY}) 
introduced the local Hardy spaces on spaces of homogeneous type via local grand maximal functions, obtained the finite atomic characterizations
for the local Hardy spaces, and also give the dual spaces of these spaces.

On the other hand, the study of variable Hardy spaces $H^{p(\cdot)}(\mathbb R^n)$ is inspired by the Lebesgue spaces with variable exponents $L^{p(\cdot)}(\mathbb R^n)$, which gain the attentions of many researchers.
The theory of variable Hardy spaces was developed 
independently by
Nakai and Sawano (\cite{NS}), Cruz-Uribe and Wang (\cite{CW}) by using different approaches. 
In \cite{ZYL}, Zhuo et al. gave the equivalent characterizations of $H^{p(\cdot)}(\mathbb R^n)$ in terms of several intrinsic square functions.
In \cite{YZN}, Yang et al. obtained the
Riesz transforms characterization for $H^{p(\cdot)}(\mathbb R^n)$. Zhuo et al. (\cite{ZSY}) also considered
the variable Hardy spaces on $RD-$spaces.
The atomic decomposition characterizations of $H^{p(\cdot)}$ is very useful when we consider the boundedness of operators on these spaces.
The atomic decomposition of Hardy spaces with variable exponents $H^{p(\cdot)}$
was established independently in \cite{CW,NS}
by using maximal function characterizations. 
Later, Sawano (\cite{S}) refined the atomic decomposition characterzations of 
$H^{p(\cdot)}(\mathbb R^n)$, and obtain some applications to the boundedness of several operators on $H^{p(\cdot)}(\mathbb R^n)$. 
The author revisited the atomic decomposition of
$H^{p(\cdot)}(\mathbb R^n)$ via Littlewood-Paley-Stein
analysis and give some applications to (sub)linear and (sub)multilinear operators in \cite{T18, Ta20, Tan20}.
Ho (\cite{Ho}) established the atomic decompositions for weighted variable Hardy spaces $H_w^{p(\cdot)}(\mathbb R^n)$. 
Moreover, the atomic decomposition characterizations
for $h^{p(\cdot)}(\mathbb R^n)$ have been studied in
\cite{Tan19, TZ}. 
Motivated by these results, we will focus on completing the real-variable theory of variable local Hardy spaces $h^{p(\cdot)}(\mathbb R^n)$, which includes that of the classical
localized Hardy space theory of Goldberg (\cite{Go}).

The main purpose of this paper is threefold. The first goal is to establish
some real-variable characterizations, including the atomic, the local vertical and the local
non-tangential maximal functions, of local variable Hardy spaces  $h^{p(\cdot)}(\mathbb R^n)$ .
The second goal is to introduce the local variable Campanato space $bmo^{p(\cdot)}(\mathbb R^n)$ and establish the duality between $h^{p(\cdot)}(\mathbb R^n)$ and $bmo^{p(\cdot)}(\mathbb R^n)$.
The third goal is to show that the boundedness of inhomogeneous Calder\'on-Zygmund singular integrals and local fractional integrals on $h^{p(\cdot)}(\mathbb R^n)$ and $bmo^{p(\cdot)}(\mathbb R^n)$.
The novelty of this paper can be summarized as follows: 
First, the approach of establishing atomic decompositions used in our paper is different from the constant exponent analogy. Indeed, we give a direct proof for the infinite atomic and finite atomic decomposition characterizations of $h^{p(\cdot)}(\mathbb R^n)$,
by avoiding the atomic decompositions of $H^{p(\cdot)}(\mathbb R^n)$. Moreover, we do not require $p^+\le1$ when we give the real-variable characterizations of $h^{p(\cdot)}(\mathbb R^n)$
and obtain the boundedness of operators on $h^{p(\cdot)}(\mathbb R^n)$. Particularly,
under certain conditions, if $f\in h^{p(\cdot)}(\mathbb R^n)$,
then there exists a sequence of special local  
$(p(\cdot),q)-$atom $\{a_j\}_j$ with supports
$\{Q_j\}_j$ and non-negative numbers $\{\lambda_j\}_j$ such that
$$
f=\sum_j\lambda_ja_j
$$
and
$$
\|f\|_{h^{p(\cdot)}(\mathbb R^n)}\sim
\left\|\sum_j\lambda_j\chi_{Q_j}\right\|_{L^{p(\cdot)}(\mathbb R^n)}.
$$
Finally yet importantly, we develop a complete dual
spaces theory of $h^{p(\cdot)}(\mathbb R^n)$
for $0<p^-\le p^+<\infty$. By products, we also establish the dual space for $H^{p(\cdot)}(\mathbb R^n)$ of $p^+>1$ and $p^-\le 1$, which gives a complete answer to the open question proposed by Izuki et.al in \cite{INS}.

The remainder of this paper is organized as follows. In Section 2, we
recall some precise definitions concerning variable Lebesgue spaces
and state the necessary lemmas which is useful in the subsequent sections.
In Section 3, we first recall the $h^{p(\cdot)}(\mathbb R^n)$ via the Littlewood-Paley-Stein theory.
Next, we give the equivalent characterizations via
the local vertical and non-tangential maximal function.
Then a new finite atomic decomposition for the local variable Hardy spaces is established
in Section 4. In section 5, we introduce a local variable Campanato space $bmo^{p(\cdot)}(\mathbb R^n)$ which is further proved to be the dual space of $h^{p(\cdot)}(\mathbb R^n)$.
Finally, in Section 6, we show that
inhomogeneous Calder\'on-Zygmund singular integrals and local
fractional integrals are bounded on $h^{p(\cdot)}(\mathbb R^n)$ and their duals.

Throughout this paper, $C$ or $c$ denotes a positive constant that may vary at each occurrence
but is independent to the main parameter, and $A\sim B$ means that there are constants
$C_1>0$ and $C_2>0$ independent of the the main parameter such that $C_1B\leq A\leq C_2B$.
Given a measurable set $S\subset \mathbb{R}^n$, $|S|$ denotes the Lebesgue measure and $\chi_S$
means the characteristic function.
Let $\mathcal S$ be the space of Schwartz functions and let $\mathcal S'$
denote the space of tempered distributions.
We also use the notations $j\wedge j'=\min\{j,j'\}$ and $j\vee j'=\max\{j,j'\}$.
Let $p(\cdot):\mathbb R^n\rightarrow (0,\infty]$
be a Lebesgue measurable function. We write $\mathbb N=\{0,1,2,\cdots\}$.
For a measurable subset $E\subset \mathbb{R}^n$, we denote
$p^-(E)= \inf_{x\in E}p(x)$ and $p^+(E)= \sup_{x\in E}p(x).$
Especially, we denote $p^-=p^{-}(\mathbb{R}^n)$ and $p^+=p^{+}(\mathbb{R}^n)$. We also write $p_-=p^-\wedge 1$.
Let $p(\cdot)$: $\mathbb{R}^n\rightarrow(0,\infty)$ be a measurable
function with $0<p^-\leq p^+ <\infty$ and $\mathcal{P}^0$
be the set of all these $p(\cdot)$.
Let $\mathcal{P}$ be the set of all measurable functions
$p(\cdot):\mathbb{R}^n \rightarrow[1,\infty) $ such that
$1<p^-\leq p^+ <\infty.$

\section{Preliminaries}

In this section, we will recall some definitions and basic results on the variable Lebesgue spaces.
For brevity, we write $X(\mathbb R^n)=X$, where
$X$ is some function space.

\begin{defn}\label{s1de1}~(\cite{CF,DHHR})\quad
The variable Lebesgue space $L^{p(\cdot)}$ is defined as the set of all
measurable functions $f$ for which the quantity
$\int_{\mathbb{R}^n}|\varepsilon f(x)|^{p(x)}dx$ is finite for some
$\varepsilon>0$ and
$$\|f\|_{L^{p(\cdot)}}=\inf{\left\{\lambda>0: \int_{\mathbb{R}^n}\left(\frac{|f(x)|}{\lambda}\right)^{p(x)}dx\leq 1 \right\}}.$$
\end{defn}

As a special case of the theory of Nakano and Luxemberg, we see that $L^{p(\cdot)}$
is a quasi-normed space. Especially, when $p^-\geq1$, $L^{p(\cdot)}$ is a Banach space.
Note that the variable exponent spaces, such as the variable Lebesgue spaces and the variable Sobolev spaces,
were studied by a substantial number of researchers (see, for instance, \cite{CFMP,KR}).
In the study of variable exponent function spaces it is common
to assume that the exponent function $p(\cdot)$ satisfies the $LH$
conditions.
We say that $p(\cdot)\in LH$, if $p(\cdot)$ satisfies

 $$|p(x)-p(y)|\leq \frac{C}{-\log(|x-y|)} ,\quad |x-y| \leq 1/2$$
and
 $$|p(x)-p(y)|\leq \frac{C}{\log|x|+e} ,\quad |y|\geq |x|.$$

Let $\mathcal{B}$ be the set of $p(\cdot)\in \mathcal{P}$ such that the
Hardy-littlewood maximal operator $M$ is bounded on  $L^{p(\cdot)}$.
It is well known
that $p(\cdot)\in \mathcal{B}$ if $p(\cdot)\in \mathcal{P}\cap LH.$
Moreover, examples shows that the above $LH$ conditions are necessary in certain sense, see Pick and R\.{u}\u{z}i\u{c}ka (\cite{PR}) for more details.
We also need the following boundedness of the vector-valued maximal operator $M$,
whose proof can be found in \cite{CFMP}.
\begin{lem}\label{s2l1}\quad Let $p(\cdot)\in \mathcal P^0\cap LH$.
Then
for any $q>1$, $f=\{f_i\}_{i\in \mathbb{Z}}$, $f_i\in L_{loc}$, $i\in \mathbb{Z}$
\begin{equation*}
  \|\|\mathbb{M}(f)\|_{l^q}\|_{L^{p(\cdot)}}\leq C\|\|f||_{l^q}\|_{L^{p(\cdot)}},
\end{equation*}
where $\mathbb{M}(f)=\{M(f_i)\}_{i\in\mathbb{Z}}$.
\end{lem}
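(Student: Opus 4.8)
The plan is to derive the vector-valued inequality of Lemma~\ref{s2l1} from the scalar boundedness of the Hardy--Littlewood maximal operator on $L^{p(\cdot)}$ together with its variants on $L^{(p(\cdot)/s)}$ for small $s>1$, via an extrapolation-type argument. Since $p(\cdot)\in\mathcal P^0\cap LH$, we may pick $s>1$ with $s<p^-$ so that $p(\cdot)/s\in\mathcal P\cap LH$ and hence $M$ is bounded on $L^{(p(\cdot)/s)'}$ as well; the $LH$ conditions are preserved under such scaling, which is the point that makes the reduction legitimate. The classical mechanism here is the Fefferman--Stein vector-valued inequality obtained through the Rubio de Francia extrapolation theorem in the variable-exponent setting, which is exactly what \cite{CFMP} carries out; so the proof I would write is a short derivation citing that machinery rather than a from-scratch argument.

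First I would reduce to the case $q$ arbitrary but fixed and recall that it suffices, by the Rubio de Francia extrapolation theorem for variable Lebesgue spaces, to prove the corresponding weighted $\ell^q$-valued inequality
\[
\Big\|\,\|\mathbb M(f)\|_{\ell^q}\,\Big\|_{L^r(w)}\le C\,\Big\|\,\|f\|_{\ell^q}\,\Big\|_{L^r(w)}
\]
for some fixed $r$ and all $w$ in the Muckenhoupt class $A_r$. This weighted vector-valued bound is the classical Fefferman--Stein inequality of Andersen--John, valid for $1<r<\infty$, $1<q\le\infty$. Then I would invoke the variable-exponent extrapolation theorem (as stated in \cite{CFMP}, using $p(\cdot)\in\mathcal P^0\cap LH$, after the harmless rescaling by $s$ above to land in the Banach range $\mathcal P$) to transfer this family of weighted estimates to the single estimate on $L^{p(\cdot)}$, which is the claim. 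The rescaling by $s$ is handled by noting $\|\,|g|^s\,\|_{L^{p(\cdot)/s}}=\|g\|_{L^{p(\cdot)}}^s$ and $M(f_i)^s\le M(|f_i|^s)^{?}$— more precisely one uses $M(f)\le M_{s}(f):=M(|f|^s)^{1/s}$ and the boundedness of $M$ on $L^{p(\cdot)/s}$ to control $\|\,\|M_s(f)\|_{\ell^q}\,\|_{L^{p(\cdot)}}$, then absorbs the loss because $M_s$ dominates $M$ pointwise.

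The main obstacle, and the only genuinely delicate point, is verifying the hypotheses of the variable-exponent extrapolation theorem in the quasi-Banach range $0<p^-\le1$: the theorem as usually stated requires $p(\cdot)\in\mathcal P$ (so $p^->1$), and one must pass through the rescaling $p(\cdot)\mapsto p(\cdot)/s$ to reach that range while checking that $p(\cdot)/s$ still lies in $\mathcal P\cap LH$ and that the $\ell^q$ structure is compatible with raising to the power $s$. Since $\ell^q$-norms do not interact with the pointwise power $|\cdot|^s$ as cleanly as scalars (one needs $\|\{g_i\}\|_{\ell^{q/s}}=\|\{g_i^s\}\|_{\ell^{q}}^{1/s}$ type identities, so $q$ should be rescaled simultaneously, $q\mapsto q/s$, which requires $q/s>1$, forcing $s<q$ in addition to $s<p^-$), a little care is needed to choose $s\in(1,\min\{p^-,q\})$ and to keep all the rescaled exponents in the admissible classes. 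All of this is routine in the literature, and I would simply record the choice of $s$, cite \cite{CFMP} for both the extrapolation theorem and the weighted Fefferman--Stein inequality, and conclude.
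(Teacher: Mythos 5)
The paper offers no in-text proof of this lemma; it simply cites \cite{CFMP}. Your proposal reconstructs the standard derivation behind that citation, and it is correct. In particular, you correctly identify that the $A_\infty$-extrapolation statement recorded as Lemma~\ref{s2l2} of this paper cannot be applied directly (the weighted Fefferman--Stein inequality holds for $A_r$ weights, not for all of $A_\infty$), so one must instead use the $A_r$ (or $A_1$) version of Rubio de Francia extrapolation from \cite{CFMP}, which delivers the conclusion only for $p(\cdot)\in\mathcal P\cap LH$; you then correctly supply the rescaling $p(\cdot)\mapsto p(\cdot)/s$, $q\mapsto q/s$ with $1<s<\min\{p^-,q\}$, using $M\le M_s$ and $\||g|^s\|_{L^{p(\cdot)/s}}=\|g\|_{L^{p(\cdot)}}^s$ together with $\|\{g_i^s\}\|_{\ell^{q/s}}=\|\{g_i\}\|_{\ell^q}^s$, to reach the full range $p(\cdot)\in\mathcal P^0\cap LH$. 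This is the expected argument and the bookkeeping (choice of $s$, preservation of $LH$ under constant scaling, $q/s>1$) is handled correctly.
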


Given a measurable function $w>0$, for $1<p<\infty$,
it is said that $w\in A_p$ if
\begin{equation*}
  [w]_{A_p}=\sup_{B}\left(\frac{1}{|B|}\int_{B}w(x)dx\right)
  \left(\frac{1}{|B|}\int_{B}w(x)^{1-p'}dx\right)^{p-1}<\infty,
\end{equation*}
where the supremum is taken over all balls $B\subset \mathbb{R}^n$.
Define the set
$$A_\infty=\cup_{p\ge1}A_p.$$
The extrapolation in variable Lebesgue space is very useful when the corresponding weighted norm inequalities is known.
\begin{lem} \label{s2l2}(\cite{CFMP}) $\mathcal{F}$ denote a family of ordered pairs of
non-negative measurable functions $(f, g)$. Suppose that their exist some $p_0$ with $0<p_0<\infty$
and every weight $w_0\in A_{\infty}$ such that
\begin{equation*}
  \int_{\mathbb{R}^n}|f(x)|^{p_0}w_0dx\leq \int_{\mathbb{R}^n}|g(x)|^{p_0}w_0dx, \quad (f,g)\in \mathcal{F},
\end{equation*}
for 
$f \in L^{p}_{w_0}$. If $p(\cdot) \in LH\bigcap\mathcal{P}^0$, then for any $(f, g)\in \mathcal{F}$ and
$f \in L^{p(\cdot)}$, we have
$$\|f\|_{L^{p(\cdot)}}\leq C\|g\|_{L^{p(\cdot)}}.$$
\end{lem}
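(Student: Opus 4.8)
The plan is to run the Rubio de Francia extrapolation scheme in the variable-exponent setting, applying the weighted hypothesis exactly once, to a single $A_1$ weight manufactured by iterating the maximal operator. The first step is to reduce to the ``Banach range''. Since the prescribed $p_0$ need not be smaller than $p^-$, I would first invoke the scalar off-diagonal extrapolation theorem for Muckenhoupt $A_\infty$ weights: the assumed inequality with exponent $p_0$ self-improves so that $\int_{\mathbb R^n}|f|^{r}w\,dx\le C_r\int_{\mathbb R^n}|g|^{r}w\,dx$ holds for \emph{every} $0<r<\infty$, every $w\in A_\infty$ and every admissible pair $(f,g)$. Fix $r\in(0,p^-)$ and set $q(\cdot):=p(\cdot)/r$; dividing a log-H\"older exponent by a constant preserves $LH$ and keeps $1<p^-/r=q^-\le q^+=p^+/r<\infty$, so $q(\cdot)\in\mathcal P\cap LH$. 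Writing $F:=|f|^{r}$ and $G:=|g|^{r}$ we have $\|f\|_{L^{p(\cdot)}}^{r}=\|F\|_{L^{q(\cdot)}}$ and $\|g\|_{L^{p(\cdot)}}^{r}=\|G\|_{L^{q(\cdot)}}$, so it suffices to show $\|F\|_{L^{q(\cdot)}}\le C\|G\|_{L^{q(\cdot)}}$ under the hypothesis $\int_{\mathbb R^n}Fw\,dx\le C_r\int_{\mathbb R^n}Gw\,dx$ for all $w\in A_\infty$.

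Since $q(\cdot)\in\mathcal P\cap LH$ one also has $q'(\cdot)\in\mathcal P\cap LH\subset\mathcal B$, so $M$ is bounded on $L^{q'(\cdot)}$; write $\|M\|$ for its operator norm there. For $0\le h\in L^{q'(\cdot)}$ I would form the Rubio de Francia operator $\mathcal Rh:=\sum_{k=0}^{\infty}(2\|M\|)^{-k}M^{k}h$, which satisfies $h\le\mathcal Rh$ pointwise, $\|\mathcal Rh\|_{L^{q'(\cdot)}}\le 2\|h\|_{L^{q'(\cdot)}}$ (triangle inequality and the geometric series), and $M(\mathcal Rh)\le 2\|M\|\,\mathcal Rh$, whence $\mathcal Rh\in A_1\subset A_\infty$ with $[\mathcal Rh]_{A_1}\le 2\|M\|$. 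Then I would use the norm conjugate (duality) formula for $L^{q(\cdot)}$, valid because $q^-\ge 1$: there is a constant $C$ with $\|F\|_{L^{q(\cdot)}}\le C\sup\big\{\int_{\mathbb R^n}Fh\,dx:\ 0\le h,\ \|h\|_{L^{q'(\cdot)}}\le1\big\}$. For a fixed such $h$ (which, after truncating, we may take bounded with compact support), H\"older's inequality in $L^{q(\cdot)}$ together with the first step gives $\int_{\mathbb R^n}F\,\mathcal Rh\,dx\le C\|F\|_{L^{q(\cdot)}}\|\mathcal Rh\|_{L^{q'(\cdot)}}<\infty$, so the hypothesis may be applied to the pair $(f,g)$ with weight $w=\mathcal Rh$; combining this with $h\le\mathcal Rh$ and H\"older once more,
\[
\int_{\mathbb R^n}Fh\,dx\ \le\ \int_{\mathbb R^n}F\,\mathcal Rh\,dx\ \le\ C_r\int_{\mathbb R^n}G\,\mathcal Rh\,dx\ \le\ C\|G\|_{L^{q(\cdot)}}\|\mathcal Rh\|_{L^{q'(\cdot)}}\ \le\ C\|G\|_{L^{q(\cdot)}}.
\]
Taking the supremum over $h$ gives $\|F\|_{L^{q(\cdot)}}\le C\|G\|_{L^{q(\cdot)}}$, and undoing the substitution, $\|f\|_{L^{p(\cdot)}}=\|F\|_{L^{q(\cdot)}}^{1/r}\le C\|G\|_{L^{q(\cdot)}}^{1/r}=C\|g\|_{L^{p(\cdot)}}$.

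The step I expect to be the real obstacle is the first one: the prescribed $p_0$ cannot be assumed to lie below $p^-$, and the passage to that case --- equivalently, making sense of the duality argument when $p(\cdot)/p_0$ is not everywhere $\ge 1$ --- rests on the scalar $A_\infty$-extrapolation theorem for Muckenhoupt weights, which is the genuine external input here. The rest is routine bookkeeping: that $\mathcal Rh$ is finite almost everywhere (it has finite $L^{q'(\cdot)}$ norm), that the weighted hypothesis legitimately applies to $(f,g)$ with $w=\mathcal Rh$ (the H\"older bound above, after truncating $h$), the stability of the $LH$ condition under $q\mapsto q'$ when $q^->1$, and the identification of $L^{q'(\cdot)}$ as the associate space of $L^{q(\cdot)}$ for $q^-\ge 1$.
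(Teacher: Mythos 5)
The paper states this as Lemma~\ref{s2l2} with a citation to \cite{CFMP} and gives no proof of its own, so there is no in-paper argument to compare against. Your reconstruction is the standard Rubio de Francia extrapolation proof in the variable-exponent setting, which is indeed the argument used in \cite{CFMP}: the preliminary reduction from the given $p_0$ to an arbitrary $r<p^-$ via classical $A_\infty$-extrapolation for constant exponents, the change of variables $F=|f|^r$, $q(\cdot)=p(\cdot)/r$ to land in the Banach range where $M$ is bounded on the conjugate space $L^{q'(\cdot)}$, the Rubio de Francia operator $\mathcal R$ producing an $A_1$ majorant of any test function $h$ with controlled $L^{q'(\cdot)}$ norm, and the norm-conjugate (duality) formula for $L^{q(\cdot)}$ to close the loop. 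The details you flag as ``the real obstacle'' are handled correctly; the remaining bookkeeping (stability of $LH$ under $p\mapsto p/r$ and $q\mapsto q'$, finiteness of $\int F\,\mathcal Rh$ via H\"older so that the weighted hypothesis is applicable, and the truncation of $h$) is also sound, though the truncation is actually superfluous since the H\"older bound already gives finiteness for every admissible $h$. In short: the proposal is correct and matches the expected proof.
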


The following generalized H\"{o}lder inequality on variable Lebesgue spaces
have been proved in \cite{CF}.

\begin{lem}\label{s2l3}
\quad Given the exponent function $p_i(\cdot)\in \mathcal{P}^0,$ define
$p(\cdot)\in \mathcal{P}^0$ by
$$\frac{1}{p(x)}=\sum_{i=1}^2\frac{1}{p_i(x)},$$
where $i=1,2.$
Then for all $f_i\in L^{p_i(\cdot)}$ and
$f_1f_2\in L^{p(\cdot)}$ and
$$\|\prod_{i=1}^2f_i\|_{p(\cdot)}\leq C\prod_{i=1}^2\|f_i\|_{p_i(\cdot)}.$$
\end{lem}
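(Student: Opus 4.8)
The plan is to normalize the two factors and reduce the estimate to a pointwise application of the classical Young inequality with variable exponents. If $\|f_1\|_{p_1(\cdot)}=0$ or $\|f_2\|_{p_2(\cdot)}=0$, then that factor vanishes almost everywhere, so $f_1f_2=0$ a.e.\ and the inequality is trivial. Otherwise $0<\|f_i\|_{p_i(\cdot)}<\infty$ for $i=1,2$, and since the quasi-norms are homogeneous it suffices to prove $\|g_1g_2\|_{p(\cdot)}\le C$ for the normalized functions $g_i:=f_i/\|f_i\|_{p_i(\cdot)}$, with $C$ depending only on $p^-$. Note first that the defining relation forces $p(x)\le p_i(x)$ for a.e.\ $x$, hence $0<(1/p_1^-+1/p_2^-)^{-1}\le p^-\le p^+\le\min\{p_1^+,p_2^+\}<\infty$, so indeed $p(\cdot)\in\mathcal P^0$ and, in particular, the modular--norm equivalence on $L^{p(\cdot)}$ and on each $L^{p_i(\cdot)}$ is available.

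Set $r_i(x):=p_i(x)/p(x)$. The hypothesis $1/p(x)=1/p_1(x)+1/p_2(x)$ is equivalent to $1/r_1(x)+1/r_2(x)=1$, and $p(x)\le p_i(x)$ gives $r_i(x)\ge 1$ a.e. Applying the pointwise Young inequality $ab\le a^{r_1(x)}/r_1(x)+b^{r_2(x)}/r_2(x)$ with $a=|g_1(x)|^{p(x)}$ and $b=|g_2(x)|^{p(x)}$, and using $p(x)r_i(x)=p_i(x)$ together with $r_i(x)\ge 1$, we get
\[
|g_1(x)g_2(x)|^{p(x)}\le \frac{1}{r_1(x)}|g_1(x)|^{p_1(x)}+\frac{1}{r_2(x)}|g_2(x)|^{p_2(x)}\le |g_1(x)|^{p_1(x)}+|g_2(x)|^{p_2(x)}.
\]
Integrating over $\mathbb R^n$ and invoking the modular--norm equivalence (legitimate since $p_i^+<\infty$), we have $\int_{\mathbb R^n}|g_i(x)|^{p_i(x)}\,dx=1$ because $\|g_i\|_{p_i(\cdot)}=1$, so $\int_{\mathbb R^n}|g_1(x)g_2(x)|^{p(x)}\,dx\le 2$.

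It remains to translate this modular bound into a norm bound. For $\lambda\ge 1$ one has $\lambda^{p(x)}\ge\lambda^{p^-}$, hence $\int_{\mathbb R^n}\big(|g_1(x)g_2(x)|/\lambda\big)^{p(x)}\,dx\le\lambda^{-p^-}\int_{\mathbb R^n}|g_1(x)g_2(x)|^{p(x)}\,dx\le 2\lambda^{-p^-}$; choosing $\lambda=2^{1/p^-}$ makes the last quantity equal to $1$, so $\|g_1g_2\|_{p(\cdot)}\le 2^{1/p^-}$. Undoing the normalization yields $\|f_1f_2\|_{p(\cdot)}\le 2^{1/p^-}\|f_1\|_{p_1(\cdot)}\|f_2\|_{p_2(\cdot)}$, which in particular shows $f_1f_2\in L^{p(\cdot)}$. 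I do not expect a genuine obstacle here; the only points needing attention are checking that the variable exponents $r_i(\cdot)$ satisfy $r_i\ge 1$ a.e.\ (so that the Young step and the subsequent discarding of $1/r_i(x)$ are legitimate) and the careful use of the modular--norm equivalence, which is exactly where the finiteness of $p^+$ and of each $p_i^+$ --- built into the definition of $\mathcal P^0$ --- is used. This is essentially the argument in \cite{CF}.
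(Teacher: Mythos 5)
Your proof is correct and complete. Note that the paper itself does not prove this lemma --- it is stated as a quotation from \cite{CF} --- so there is no in-paper argument to compare against; what you give is a clean, self-contained reconstruction. The key steps all check out: the normalization is legitimate (and the degenerate case $\|f_i\|_{p_i(\cdot)}=0$ is correctly dispatched); the pointwise conjugate exponents $r_i(x)=p_i(x)/p(x)$ satisfy $r_i(x)>1$ strictly a.e.\ because each $p_j^+<\infty$, so the pointwise Young inequality applies without any edge-case trouble; the identity $\int_{\mathbb R^n}|g_i|^{p_i(x)}\,dx=1$ for $\|g_i\|_{p_i(\cdot)}=1$ is exactly the unit-ball/modular property, valid because $p_i^+<\infty$; and the final passage from the modular bound $\int|g_1g_2|^{p(x)}\,dx\le 2$ to the norm bound $\|g_1g_2\|_{p(\cdot)}\le 2^{1/p^-}$ uses only $p^->0$ together with $\lambda\ge 1$, which holds for $\lambda=2^{1/p^-}$. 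One minor remark: your direct modular/Young route avoids the associate-norm duality that is sometimes used to prove the variable H\"older inequality in the $p^-\ge 1$ setting, and handles the full $\mathcal P^0$ range in one stroke without the usual reduction trick of replacing $p(\cdot)$ by $p(\cdot)/s$ for $s<p^-$; this is arguably more elementary than the treatment one typically finds in the literature.
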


\begin{lem}(\label{s2l4}\cite{CW})
\quad Given an exponent function $p(\cdot)\in \mathcal{P}^0$
with $p^-\le 1$, 
then for all $f,\;g\in L^{p(\cdot)}$,
$$\|f+g\|^{p^-}_{L^p(\cdot)}\leq \|f\|^{p^-}_{L^{p(\cdot)}}+\|g\|^{p^-}_{L^{p(\cdot)}}.$$
\end{lem}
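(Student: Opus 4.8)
The plan is to argue through the modular $\rho(h):=\int_{\mathbb R^n}|h(x)|^{p(x)}\,dx$ together with the unit--ball property of the Luxemburg quasi-norm: since $p(\cdot)\in\mathcal P^0$ has $p^+<\infty$, one has $\|h\|_{L^{p(\cdot)}}\le\lambda$ if and only if $\rho(h/\lambda)\le1$, and in particular $\rho\big(h/\|h\|_{L^{p(\cdot)}}\big)\le1$ whenever $0<\|h\|_{L^{p(\cdot)}}<\infty$ (this last boundary statement follows from the monotone convergence theorem applied along $\lambda\downarrow\|h\|_{L^{p(\cdot)}}$, since $t\mapsto t^{p(x)}$ is increasing and continuous). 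If $\|f\|_{L^{p(\cdot)}}=0$ or $\|g\|_{L^{p(\cdot)}}=0$ the claimed inequality is trivial, so I assume $A:=\|f\|_{L^{p(\cdot)}}>0$ and $B:=\|g\|_{L^{p(\cdot)}}>0$ and set $\lambda:=\big(A^{p^-}+B^{p^-}\big)^{1/p^-}$. By the unit--ball property it then suffices to prove $\rho\big((f+g)/\lambda\big)\le1$, which is precisely the desired estimate $\|f+g\|_{L^{p(\cdot)}}^{p^-}\le A^{p^-}+B^{p^-}$.

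Next I would establish a pointwise bound for $|(f+g)/\lambda|^{p(x)}$. Using $0<p^-\le1$ and the elementary inequality $(s+t)^{p^-}\le s^{p^-}+t^{p^-}$ for $s,t\ge0$, we get $|f(x)+g(x)|/\lambda\le\big(|f(x)|^{p^-}/\lambda^{p^-}+|g(x)|^{p^-}/\lambda^{p^-}\big)^{1/p^-}$ and hence
\[
\Big|\frac{f(x)+g(x)}{\lambda}\Big|^{p(x)}\le\Big(\frac{|f(x)|^{p^-}}{\lambda^{p^-}}+\frac{|g(x)|^{p^-}}{\lambda^{p^-}}\Big)^{p(x)/p^-}.
\]
Put $u:=A^{p^-}/\lambda^{p^-}$ and $v:=B^{p^-}/\lambda^{p^-}$, so that $u,v\ge0$ and $u+v=1$ by the choice of $\lambda$. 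Rewriting the right-hand side as $\big(u\,|f(x)|^{p^-}A^{-p^-}+v\,|g(x)|^{p^-}B^{-p^-}\big)^{p(x)/p^-}$ and using that $t\mapsto t^{p(x)/p^-}$ is convex on $[0,\infty)$ (because $p(x)/p^-\ge1$ for every $x$), Jensen's inequality with weights $u,v$ gives
\[
\Big|\frac{f(x)+g(x)}{\lambda}\Big|^{p(x)}\le u\,\frac{|f(x)|^{p(x)}}{A^{p(x)}}+v\,\frac{|g(x)|^{p(x)}}{B^{p(x)}}.
\]

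Finally I would integrate this inequality over $\mathbb R^n$. Since $\int_{\mathbb R^n}|f(x)|^{p(x)}A^{-p(x)}\,dx=\rho(f/A)\le1$ and likewise $\rho(g/B)\le1$ by the boundary case of the unit--ball property, we obtain $\rho\big((f+g)/\lambda\big)\le u+v=1$, whence $\|f+g\|_{L^{p(\cdot)}}\le\lambda$ and the lemma follows. The only points requiring a little care are the boundary identity $\rho\big(h/\|h\|_{L^{p(\cdot)}}\big)\le1$ (where finiteness of $p^+$ enters) and the bookkeeping between the two exponents $p^-$ and $p(x)$; the actual mechanism of the proof is the normalization that makes $u+v=1$, after which convexity of $t\mapsto t^{p(x)/p^-}$ does all the work. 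I do not expect a serious obstacle here.
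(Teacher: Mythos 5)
Your proof is correct. The paper cites this lemma from Cruz-Uribe and Wang's \emph{Variable Hardy spaces} without reproducing a proof, so there is no in-paper argument to compare against; your normalization-plus-convexity argument is the standard way to establish the $p^-$-power triangle inequality for variable-exponent Luxemburg quasi-norms, and each step checks out. In particular: the reduction to $\rho\bigl((f+g)/\lambda\bigr)\le1$ with $\lambda=(A^{p^-}+B^{p^-})^{1/p^-}$ is the right move; the subadditivity $(s+t)^{p^-}\le s^{p^-}+t^{p^-}$ uses $p^-\le1$ exactly where it should; and after pulling out the weights $u=A^{p^-}/\lambda^{p^-}$, $v=B^{p^-}/\lambda^{p^-}$ with $u+v=1$, Jensen applied to the convex map $t\mapsto t^{p(x)/p^-}$ gives the pointwise bound whose integral is $\le u\,\rho(f/A)+v\,\rho(g/B)\le1$. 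Your treatment of the boundary case $\rho\bigl(h/\|h\|_{L^{p(\cdot)}}\bigr)\le1$ via monotone convergence is also fine (and in fact does not even need $p^+<\infty$, only monotonicity of $t\mapsto t^{p(x)}$). One minor remark: the trivial case dispatch should also include $A=\infty$ or $B=\infty$, where the right-hand side is $+\infty$ and there is nothing to prove, but this is cosmetic.
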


\begin{lem}(\label{s2l5}\cite{CF})
\quad Given an exponent function $p(\cdot)\in \mathcal{P},$ if $E\subset\mathbb R^n$ is such that $|E|<\infty,$
then $\chi_E\in L^{p(\cdot)}$ and
$$\|\chi_E\|_{L^p(\cdot)}\leq |E|+1.$$
\end{lem}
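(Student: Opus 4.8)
The plan is to reduce the statement to the defining property of the Luxemburg norm. By Definition \ref{s1de1}, for any measurable set $E$ with $|E|<\infty$ we have $\|\chi_E\|_{L^{p(\cdot)}}=\inf\{\lambda>0:\rho_{p(\cdot)}(\chi_E/\lambda)\le1\}$, where $\rho_{p(\cdot)}(f)=\int_{\mathbb R^n}|f(x)|^{p(x)}\,dx$ is the modular. First I would observe that since $\chi_E$ is supported on $E$, for any $\lambda>0$ one has
$$
\rho_{p(\cdot)}\!\left(\frac{\chi_E}{\lambda}\right)=\int_E\lambda^{-p(x)}\,dx.
$$
For $\lambda\ge1$ and $p(x)\ge1$ (here we use $p(\cdot)\in\mathcal P$, so $p(x)\ge p^-\ge1$ a.e.) we have $\lambda^{-p(x)}\le\lambda^{-1}$, hence $\rho_{p(\cdot)}(\chi_E/\lambda)\le|E|/\lambda$.

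Next I would choose $\lambda=\max\{1,|E|\}$. Then $\lambda\ge1$, so the above estimate applies and gives $\rho_{p(\cdot)}(\chi_E/\lambda)\le|E|/\lambda\le1$ by the choice of $\lambda$. By the definition of the infimum this yields $\|\chi_E\|_{L^{p(\cdot)}}\le\lambda=\max\{1,|E|\}\le|E|+1$, which is the claimed bound. In particular $\chi_E\in L^{p(\cdot)}$ since its norm is finite. I would also note in passing that the case $|E|=0$ is trivial (the norm is $0$), so the argument covers all $E$ with $|E|<\infty$.

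There is no real obstacle here: the only point requiring attention is the use of $p(x)\ge1$, which is exactly why the hypothesis $p(\cdot)\in\mathcal P$ (rather than the larger class $\mathcal P^0$) is imposed — for exponents below $1$ the bound $\lambda^{-p(x)}\le\lambda^{-1}$ for $\lambda\ge1$ fails, and one would have to work harder (or accept a weaker conclusion). Since the paper only needs the statement for $p(\cdot)\in\mathcal P$, the short argument above suffices.
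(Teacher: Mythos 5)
Your proof is correct, and since the paper itself simply cites this lemma to [CF] without reproducing an argument, your direct computation from the Luxemburg-norm definition (using $\lambda^{-p(x)}\le\lambda^{-1}$ for $\lambda\ge1$, $p(x)\ge1$, and then taking $\lambda=\max\{1,|E|\}$) is exactly the standard short proof from the reference. Your side remark correctly identifies where the hypothesis $p(\cdot)\in\mathcal P$ enters; nothing is missing.
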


The following key lemma also plays a key role in the proofs of the main results.
The Grafakos-Kalton lemma was first established
in \cite{GK} when they consider the multlinear Calder\'on-Zygmund operators on Hardy spaces
(also see \cite{CMN1} on the weighted extension).
We need the variable exponent version as follows.

\begin{lem}(\cite{CMN})\label{s2l6}
Let $q(\cdot)\in LH\cap \mathcal P^0.$
Suppose that we are given a sequence of cubes
$\{Q_j\}_{j=1}^\infty$ and a sequence of non-negative
functions $\{F_j\}_{j=1}^\infty$.
Then for any q such that $(1\vee p^+)<q<\infty$ we have
\begin{align*}
\left\|\sum_{j=1}^\infty\chi_{Q_j}F_j\right\|_{L^{q(\cdot)}}
\le C\left\|\sum_{j=1}^\infty\left(\frac{1}{|Q_j|}
\int_{Q_j}F_j^q(y)dy\right)^{\frac{1}{q}}\chi_{Q_j}\right\|_{L^{q(\cdot)}}.
\end{align*}
\end{lem}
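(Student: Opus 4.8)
The plan is to deduce the variable-exponent inequality from the known scalar weighted inequality via the Rubio de Francia extrapolation machinery (Lemma \ref{s2l2}), exactly as the Grafakos--Kalton lemma is usually proved in the constant-exponent weighted setting. Fix $q$ with $(1\vee p^+)<q<\infty$. I would introduce the family $\mathcal F$ of ordered pairs
$$\left(\sum_{j=1}^\infty\chi_{Q_j}F_j,\ \sum_{j=1}^\infty\left(\frac{1}{|Q_j|}\int_{Q_j}F_j^q(y)\,dy\right)^{1/q}\chi_{Q_j}\right),$$
where the index ranges over all finite or countable collections of cubes $\{Q_j\}$ and non-negative functions $\{F_j\}$. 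To apply Lemma \ref{s2l2} with exponent $p_0=q$, I need: for every $w\in A_\infty$,
$$\int_{\mathbb R^n}\Big(\sum_j\chi_{Q_j}(x)F_j(x)\Big)^q w(x)\,dx\ \le\ C\int_{\mathbb R^n}\Big(\sum_j\Big(\tfrac{1}{|Q_j|}\int_{Q_j}F_j^q\Big)^{1/q}\chi_{Q_j}(x)\Big)^q w(x)\,dx.$$
Since $q>p^+\ge 1$ we have $w\in A_\infty=\bigcup_{p\ge1}A_p$, so $w\in A_r$ for some $r$, and choosing $q$ large (which is allowed, since the statement only requires $q$ to exceed $1\vee p^+$, and the variable-exponent conclusion for one admissible $q$ can be bootstrapped, or one simply proves the weighted bound for all $w\in A_\infty$ directly) one arrives at the scalar estimate.

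The heart of the matter is therefore the scalar weighted inequality, which is the original Grafakos--Kalton estimate (stated in \cite{GK}, weighted version in \cite{CMN1}): for $w\in A_\infty$ and $q$ large enough relative to the $A_\infty$ constant of $w$,
$$\Big\|\sum_j\chi_{Q_j}F_j\Big\|_{L^q_w}\ \le\ C\,\Big\|\sum_j\Big(\tfrac{1}{|Q_j|}\int_{Q_j}F_j^q\Big)^{1/q}\chi_{Q_j}\Big\|_{L^q_w}.$$
I would reduce this to a pointwise/duality argument: pairing $\sum_j\chi_{Q_j}F_j$ against a non-negative $h\in L^{(q/q_0)'}_w$ for a suitable auxiliary exponent, one estimates $\int_{Q_j}F_j h\,w$ by H\"older in the exponent $q$ on $F_j$ and then controls the remaining average of $h\,w$ over $Q_j$ by the maximal function $M(h\,w)(x)$ for $x\in Q_j$ — this is where the $A_\infty$ hypothesis enters, through a reverse H\"older inequality for $w$ that lets one pass between the measure $w\,dx$ and Lebesgue measure on the cubes at the cost of enlarging $q$. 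Summing in $j$ and invoking the Fefferman--Stein inequality for $M$ on $L_w$ closes the loop. The passage from the weighted scalar bound back to $L^{q(\cdot)}$ is then immediate from Lemma \ref{s2l2} once $q(\cdot)\in LH\cap\mathcal P^0$; note the hypothesis $q(\cdot)\in LH\cap\mathcal P^0$ is exactly what Lemma \ref{s2l2} demands.

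The main obstacle I anticipate is the bookkeeping around \emph{how large $q$ must be}: the scalar Grafakos--Kalton inequality is only valid when $q$ exceeds a threshold depending on the weight, so to feed it into extrapolation one must either (a) verify that the threshold can be taken uniform over all $w\in A_\infty$ with a fixed $A_\infty$ seminorm — which is false as stated, so one instead (b) exploits that extrapolation runs over \emph{all} $w\in A_\infty$ and the conclusion in $L^{q(\cdot)}$ is insensitive to the particular admissible $q$, because the left and right sides of the claimed inequality do not literally depend on which $q>(1\vee p^+)$ one used in the hypothesis verification. Concretely: given the target exponent range, pick any $q$ with $(1\vee p^+)<q<\infty$; for \emph{that} $q$, the scalar inequality holds for all $w\in A_\infty$ provided we first observe it suffices to check it for $w$ with controlled $A_q$ constant and then use that $L^{q(\cdot)}$-extrapolation only sees the family $\mathcal F$, not the weight. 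Making this reduction airtight — i.e., correctly matching the $q$ in the exponent of extrapolation with the $q$ inside the averaging operator and the $q$ governing the reverse-H\"older step — is the one place where care is genuinely required; everything else is the standard extrapolation-plus-maximal-function routine.
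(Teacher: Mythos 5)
The paper does not prove Lemma \ref{s2l6}; it simply cites \cite{CMN}. So the comparison is against what a complete proof would have to look like, and on that score your proposal has a genuine gap precisely at the step you yourself flagged as delicate — and your attempted repairs (a) and (b) do not close it.

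The difficulty is structural, not bookkeeping. To apply Lemma \ref{s2l2} you must exhibit a \emph{single} exponent $p_0$ such that the scalar inequality
\begin{equation*}
\int_{\mathbb R^n}\Big(\sum_j\chi_{Q_j}F_j\Big)^{p_0}w\,dx\ \le\ C_w\int_{\mathbb R^n}\Big(\sum_j\Big(\tfrac{1}{|Q_j|}\int_{Q_j}F_j^q\Big)^{1/q}\chi_{Q_j}\Big)^{p_0}w\,dx
\end{equation*}
holds for \emph{every} $w\in A_\infty$, with the inner exponent $q$ already fixed by the statement of the lemma. But this is false for any fixed pair $(p_0,q)$. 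Take $n=1$, $Q_1=[-1,1]$, $F_1=\chi_{[-\varepsilon,\varepsilon]}$, and $w(x)=|x|^{\eta-1}$ with $\eta\in(0,1)$. Then the left side is comparable to $\varepsilon^{\eta}$ and the right side to $\varepsilon^{p_0/q}$, so the inequality forces $\eta\ge p_0/q$; letting $\eta\downarrow 0$ (which stays inside $A_\infty$) defeats every fixed choice of $p_0$ and $q$. This is exactly the weight-dependent threshold you anticipated, and it really does break the direct application of Lemma \ref{s2l2}. Your resolution (b) asserts that ``the left and right sides of the claimed inequality do not literally depend on which $q>(1\vee p^+)$ one used in the hypothesis verification,'' but that is not so: the right side $\big(\frac{1}{|Q_j|}\int_{Q_j}F_j^q\big)^{1/q}\chi_{Q_j}$ manifestly depends on $q$, and it is precisely this $q$ that must beat the $A_\infty$-dependent reverse-H\"older exponent. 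Your (a) you already concede is false, and the sentence beginning ``it suffices to check it for $w$ with controlled $A_q$ constant'' is not justified — extrapolation requires all of $A_\infty$, not a subfamily with uniform constants.

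What actually works (and what \cite{CMN} does) avoids feeding the Grafakos--Kalton pair directly into $A_\infty$-extrapolation. After reducing to the case $q^->1$ one argues by duality in $L^{q(\cdot)}$: pair $\sum_j\chi_{Q_j}F_j$ against $h\ge0$ with $\|h\|_{L^{q'(\cdot)}}\le1$, apply H\"older with exponent $q$ on each $Q_j$ to produce the average $\big(\frac{1}{|Q_j|}\int_{Q_j}F_j^q\big)^{1/q}$ together with $\big(\frac{1}{|Q_j|}\int_{Q_j}h^{q'}\big)^{1/q'}\le\inf_{x\in Q_j}M_{q'}h(x)$, then apply the generalized H\"older inequality (Lemma \ref{s2l3}) and the boundedness of $M_{q'}$ on $L^{q'(\cdot)}$. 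The latter needs $M$ bounded on $L^{q'(\cdot)/q'}$, i.e. $(q^+)'>q'$, i.e. $q>q^+$ — this is exactly where the hypothesis $q>(1\vee q^+)$ (the $p^+$ in the statement is a typo for $q^+$) enters, and it enters uniformly, with no weight threshold. If you want to salvage an extrapolation proof you would need a limited-range or restricted-weight extrapolation theorem (not Lemma \ref{s2l2}) in which the class of admissible $w$ is tied to $q$; as written, your argument relies on a weighted scalar estimate that does not hold in the generality you assume.

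One smaller point: your family $\mathcal F$ should consist of pairs, with the cube and function data absorbed into the index set, and the extrapolation exponent $p_0$ in Lemma \ref{s2l2} is not required to equal $q$; conflating the two (``apply Lemma \ref{s2l2} with exponent $p_0=q$'') is an additional, unforced restriction and makes the gap worse, since then even well-behaved $w$ may fail the reverse-H\"older step.
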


\begin{lem} (\cite{DHHR})\label{s2l7}
 Suppose that $p(\cdot)\in LH$
and $0<p^-\leq p^+<\infty$.

(1)\quad For all cubes (or balls) $|Q|\leq 2^n$ and any $x\in Q$, we have
$$\|\chi_Q\|_{{p(\cdot)}}\sim |Q|^{1/p(x)}.$$

(2)\quad For all cubes (or balls) $|Q|\geq1$, we
have
$$\|\chi_Q\|_{{p(\cdot)}}\sim |Q|^{1/p_\infty}, $$
where $ p_{\infty}=\lim_{x\rightarrow\infty}p(x).$
\end{lem}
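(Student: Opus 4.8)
The plan is to transfer everything to the modular $\rho(f):=\int_{\mathbb R^n}|f(y)|^{p(y)}\,dy$ and to use the elementary scaling inequalities $\rho(sf)\le s^{p^-}\rho(f)$ for $0<s\le1$ and $\rho(sf)\le s^{p^+}\rho(f)$ for $s\ge1$. A standard consequence, valid since $p^+<\infty$, is that whenever $\rho(f/\lambda)$ lies in a fixed interval $[c_1,c_2]$ with $0<c_1\le1\le c_2$, one has $\|f\|_{L^{p(\cdot)}}\sim\lambda$ with implied constants depending only on $c_1,c_2,p^-,p^+$. Thus for (1) it will be enough to show $\rho(\chi_Q/\lambda)\sim1$ with $\lambda=|Q|^{1/p(x)}$, and for (2) with $\lambda=|Q|^{1/p_\infty}$; here $p_\infty=\lim_{|x|\to\infty}p(x)$ exists and satisfies $|p(y)-p_\infty|\le C/\log(e+|y|)$ for $|y|\ge1$, by the decay half of the $LH$ hypothesis. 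I would dispose at the outset of cubes whose side length $\ell:=|Q|^{1/n}$ stays in a fixed compact subinterval of $(0,\infty)$: there the crude bounds $|Q|\,\mu^{-p^+}\le\rho(\chi_Q/\mu)\le|Q|\,\mu^{-p^-}$ for $\mu\ge1$ (with the two inequalities interchanged for $0<\mu\le1$) pin $\|\chi_Q\|_{L^{p(\cdot)}}$ and both $|Q|^{1/p(x)}$, $|Q|^{1/p_\infty}$ between positive constants depending only on $n,p^-,p^+$. Hence I may assume $\ell$ small in (1) and $\ell$ large in (2).

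For part (1) I would take $\lambda=|Q|^{1/p(x)}=\ell^{n/p(x)}$, so that $\lambda^{-p(y)}=\ell^{-n}\,\ell^{\,n(p(x)-p(y))/p(x)}$. For $x,y\in Q$ one has $|x-y|\le\sqrt n\,\ell\le\tfrac12$ (as $\ell$ is now small), so the local $LH$ estimate gives $|p(x)-p(y)|\le C/\log\!\big(1/(\sqrt n\,\ell)\big)$; since $\log(1/\ell)=\log\!\big(1/(\sqrt n\,\ell)\big)+\log\sqrt n$ and $\log\!\big(1/(\sqrt n\,\ell)\big)\ge\log2$, the exponent $n(p(x)-p(y))\log(1/\ell)/p(x)$ is bounded in absolute value by a constant depending only on $n,p^-,C$. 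Therefore $\lambda^{-p(y)}\sim\ell^{-n}=|Q|^{-1}$ uniformly in $y\in Q$, and so $\rho(\chi_Q/\lambda)=\int_Q\lambda^{-p(y)}\,dy\sim|Q|\cdot|Q|^{-1}=1$; the opening paragraph then closes the case. For balls one only changes the dimensional constant $\sqrt n$.

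For part (2) I would take $\lambda=|Q|^{1/p_\infty}=\ell^{n/p_\infty}$, so $\lambda^{-p(y)}=\ell^{-n}\,\ell^{\,n(p_\infty-p(y))/p_\infty}$. If $\operatorname{dist}(0,Q)\ge\sqrt\ell$, then $\log(e+|y|)\ge\tfrac12\log\ell$ for all $y\in Q$, the exponent is $O(1)$, hence $\lambda^{-p(y)}\sim\ell^{-n}$ and $\rho(\chi_Q/\lambda)\sim\ell^{-n}|Q|=1$. Otherwise $\operatorname{dist}(0,Q)<\sqrt\ell$, and I would fix a parameter $\theta\in(0,p^-/p_\infty)$, set $R:=\ell^{\theta}$, and split $Q=(Q\cap B(0,R))\cup(Q\setminus B(0,R))$. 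On the inner piece I would use only the crude bound $\lambda^{-p(y)}\le\ell^{-np^-/p_\infty}$ (valid since $\ell\ge1$) together with $|Q\cap B(0,R)|\le\omega_nR^n$, getting a contribution $\le\omega_n\,\ell^{\,n(\theta-p^-/p_\infty)}\to0$. On the outer piece $|y|\ge R$ forces $\log(e+|y|)\ge\theta\log\ell$, so the exponent is $\le nC/(p_\infty\theta)$, hence $\lambda^{-p(y)}\sim\ell^{-n}$; and since $|Q\setminus B(0,R)|\ge|Q|-\omega_nR^n\ge\tfrac12\,\ell^{n}$ once $\ell$ is large, this piece is $\sim\ell^{-n}\cdot\ell^{n}=1$. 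Summing, $\rho(\chi_Q/\lambda)\sim1$, and the opening paragraph concludes; balls are identical.

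The one genuinely delicate point — and the step I expect to be the main obstacle — is part (2) for large cubes sitting near the origin when $p^-$ is far below $p_\infty$: a naive ``measure times supremum'' bound over $Q\cap B(0,R)$ is hopelessly wasteful. The resolution is to choose the cutoff radius $R=\ell^{\theta}$ to be a \emph{small positive power} of the side length: small enough, namely $\theta<p^-/p_\infty$, that the near-origin mass $\lesssim\ell^{n\theta}$ is absorbed by the gain $\ell^{-np^-/p_\infty}$, yet still a genuine power, so that $\log R\sim\log\ell$ kills the $LH$-defect $C/\log(e+|y|)$ on the complement. Getting this balance right is the crux; the remaining estimates are the routine modular bookkeeping of the first two paragraphs.
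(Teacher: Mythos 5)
The paper does not prove this lemma; it attributes it to \cite{DHHR} (where it is essentially Corollary~4.5.9 together with Lemma~4.1.6), so there is no internal proof to compare against. Your argument is a correct first-principles derivation, and it follows the same modular strategy that underlies the cited reference: replace the norm by the modular $\rho(\chi_Q/\lambda)$ (legitimate because $p^+<\infty$), then use the two halves of the log-H\"older hypothesis to show $\lambda^{-p(y)}\sim|Q|^{-1}$ on (most of) $Q$. The genuinely delicate configuration you single out — a large cube sitting near the origin, where the decay estimate $|p(y)-p_\infty|\lesssim 1/\log(e+|y|)$ is useless — is handled correctly by the cutoff $R=\ell^{\theta}$ with $0<\theta<p^-/p_\infty$: the near-origin piece is absorbed by the uniform gain $\lambda^{-p(y)}\le\ell^{-np^-/p_\infty}$, while on the complement $\log R\sim\log\ell$ makes the log-H\"older defect $O(1/\log\ell)$ cancel the $\log\ell$ in the exponent. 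This is the same mechanism \cite{DHHR} employ, though they package the behaviour at infinity through Nekvinda's integral condition rather than by an explicit radial split; your version is more elementary and self-contained. Two minor remarks. First, the requirement ``$0<c_1\le1\le c_2$'' in your opening reduction is not needed: $\rho(\chi_Q/\lambda)\in[c_1,c_2]$ with $0<c_1\le c_2<\infty$ already traps $\|\chi_Q/\lambda\|_{L^{p(\cdot)}}$ between $\min(c_1^{1/p^-},c_1^{1/p^+})$ and $\max(c_2^{1/p^-},c_2^{1/p^+})$, which is all you use. Second, in the outer estimate the exponent bound should read $nC/(p_\infty\theta)$ (or be relaxed to $nC/(p^-\theta)$); either is fine since you only need it to be bounded uniformly in $\ell$, as you indeed conclude.
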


\section{Local variable Hardy spaces and their maximal characterizations}

In this section, we will give several equivalent characterizations for local variable Hardy space. To state the results, we need some definitions. 
We write $\psi_t(x)=t^{-n}\psi(t^{-1}x)$ for all $x\in\mathbb R^n$.
Let $\mathcal D$ denote the set of all $C^\infty$ functions on $\mathbb R^n$
with compact supports, equipped with the inductive limit topology, and
$\mathcal D'$ its topological dual space, equipped with the weak-$*$ topology.
For $N\in\mathbb N_0$, $|\alpha|\le N+1$ and $R\in(0,\infty)$, let
$$
\mathcal D_{N,R}=\{\psi\in\mathcal D:\mbox{supp}(\psi)\subset B(0,R),
\int\psi\neq0,\|D^\alpha\psi\|_\infty\le1\}.
$$

First, we recall the local vertical, non-tangential grand maximal functions as follows.
\begin{defn}\label{s3d1}~\quad
For any $f\in\mathcal D'$, the local vertical grand maximal function 
$\mathcal G_{N,R}(f)$ of $f$ is defined by setting, for all $x\in\mathbb R^n$,
$$
\mathcal G_{N,R}(f)(x)\equiv\sup_{t\in(0,1)}\{|\psi_t\ast f(x)|:\psi\in
\mathcal D_{N,R}\},
$$
and the local non-tangential grand maximal function 
$\tilde{\mathcal G}_{N,R}(f)$ of $f$ is defined by setting, 
for all $x\in\mathbb R^n$,
$$
\tilde{\mathcal G}_{N,R}(f)(x)\equiv\sup_{|x-z|<t<1}\{|\psi_t\ast f(z)|:\psi\in
\mathcal D_{N,R}\}.
$$
For convenience, we write $\mathcal G_{N,1}(f)=\mathcal G_{N}^0(f)$
and $\tilde{\mathcal G}_{N,1}(f)=\tilde{\mathcal G}_{N}^0(f)$
and also write $\mathcal G_{N,2^{3(10+n)}}(f)=\mathcal G_{N}(f)$
and $\tilde{\mathcal G}_{N,2^{3(10+n)}}(f)=\tilde{\mathcal G}_{N}(f)$.
Obviously, $$
\mathcal G_{N}^0(f)\le \mathcal G_{N}(f)\le\tilde{\mathcal G}_{N}(f).
$$
\end{defn}

Next, we also recall the local vertical, tangential and non-tangential
maximal functions.

\begin{defn}\label{s3d2}~\quad
Let $
\psi_0\in\mathcal D$ with $\int \psi_0(x)dx\neq 0$.
The local vertical maximal
function $\mathcal M_{\psi_0}(f)$ is defined by
$$
\mathcal M_{\psi_0}(f)(x)\equiv \sup_{j\in\mathbb Z_+}(\psi_0)_j\ast f(x)|.
$$
For $j\in\mathbb Z_+$, $A,\,B\ge1$, the local tangential peetre-type maximal function $\psi^{\ast,\ast}_{0,A,B}(f)$
is defined by
$$
\psi^{\ast,\ast}_{0,A,B}(f)(x)\equiv \sup_{j\in\mathbb Z_+,y\in\mathbb R^n}
\frac{(\psi_0)_j\ast f(x-y)|}{(1+2^j|y|)^A2^{B|y|}},
$$
The local non-tangantial maximal
function $\mathcal M^\ast_{\psi_0}(f)$ is defined by
$$
\mathcal M^\ast_{\psi_0}(f)(x)\equiv \sup_{|x-y|<t<1}(\psi_0)_t\ast f(x)|.
$$
Hereafter, $(\psi_0)_j(x)=2^{jn}\psi_0(2^jx)$ and $(\psi_0)_t(x)=(1/{t^n})\psi_0(x/t)$.
\end{defn}

We also introduce the local Littlewood-Paley-Stein square function below.
\begin{defn}\label{s3d3}~\quad
Let $\varphi\in \mathcal{D}(\mathbb{R}^n)$ satisfy
\begin{equation*}
  \mbox{supp}\;\widehat{\varphi}(\xi)\subset \{\xi:1/2<|\xi|\leq2\},
\end{equation*}

\noindent and $\Phi$ whose Fourier transform does not vanish at the origin with
\begin{equation*}
  \mbox{supp}\;\widehat{\Phi}\subset\{\xi:|\xi|\leq 2\}
\end{equation*}
\noindent satisfy
\begin{equation*}
  |\widehat{\Phi}(\xi)|^2
  +\sum_{j=1}^\infty|\widehat{\varphi}(2^{-j}\xi)|^2=1,\quad \mbox{for}\;
  \mbox{all}\; \xi\in \mathbb{R}^n.
\end{equation*}

We denote $\Phi=\varphi_0$.
For $f\in \mathcal D'$,
we give the definition of local Littlewood-Paley-Stein square function
\begin{align*}
\mathcal{G}_{loc}(f)(x):=\left(\sum_{j\in \mathbb N}|\varphi_j\ast f(x)|^2\right)^{1/2},
\end{align*}

and the discrete Littlewood-Paley-Stein square function
\begin{align*}
\mathcal{G}_{loc}^d(f)(x):=\left(\sum_{j\in \mathbb N}
\sum_{\mathbf k\in\mathbb Z^n}|\varphi_j\ast f(2^{-j}\mathbf k)|^2\chi_Q(x)\right)^{1/2},
\end{align*}
where $Q$ denote dyadic cubes in $\mathbb R^n$ with side-lengths $2^{-j}$ and the
lower left-corners of $Q$ are $2^{-j}\mathbf k$.
\end{defn}

We now recall the local variable Hardy spaces via  local Littlewood-Paley-Stein square function in \cite{Tan19}.
\begin{defn}\label{s3d4}~\quad
Let $f\in \mathcal{D'}$, $p(\cdot)\in {\mathcal{P}^0}$.
The localized Hardy space with variable exponent ${h}^{p(\cdot)}$
is the set of all $f\in \mathcal{D}^\prime$ for which the quantity
$$\|f\|_{{h}^{p(\cdot)}}=\|\Phi\ast f\|_{{L}^{p(\cdot)}}
+\Bigg\|\Big\{\sum_{j=1}^\infty|\varphi_j\ast f|^2\Big\}^{1/2}\Bigg\|_{{L}^{p(\cdot)}}<\infty.$$
\end{defn}

\begin{rem}\label{s1r1}
Let all notation be as in Definition \ref{s3d4}, observe that
$$
\|f\|_{{h}^{p(\cdot)}} \sim
\Bigg\|\Big\{\sum_{j=0}^\infty|\varphi_j\ast f|^2\Big\}^{1/2}\Bigg\|_{{L}^{p(\cdot)}}.
$$
Also, it is shown in \cite[Theorem 1.3]{Tan19} that $\|f\|_{{h}^{p(\cdot)}}
\sim\|\mathcal{G}_{loc}^d(f)\|_{{L}^{p(\cdot)}}$
\end{rem}

Now, let us state the main results in this section.
We first obtain the equivalent characterizations of $h^{p(\cdot)}$ as follows.

\begin{thm}\label{s3th1}\quad
Let $f\in \mathcal{D'}$, $p(\cdot)\in \mathcal P^0\cap LH$ .
For a fixed large integer $N_0$, any $N\ge N_0$, the following statements
are mutually equivalent:\\$
(1)\; f\in h^{p(\cdot)};\\
(2)\; \mathcal G_N(f)\in L^{p(\cdot)};\\
(3)\; \mathcal G^0_N(f)\in L^{p(\cdot)};\\
(4)\; \tilde{\mathcal G}_N(f)\in L^{p(\cdot)};\\
(5)\; \tilde{\mathcal G}^0_N(f)\in L^{p(\cdot)};\\
(6)\; \mathcal M_{\psi_0}(f)\in L^{p(\cdot)};\\
(7)\; \mathcal M^\ast_{\psi_0}(f)\in L^{p(\cdot)};\\
(8)\; \psi^{\ast,\ast}_{0,A,B}(f)\in L^{p(\cdot)}.\\
$
Moreover,
for all $f\in \mathcal {D'}$,
\begin{align*}
\|f\|_{h^{p(\cdot)}}&\sim \|\mathcal G_N(f)\|_{{L}^{p(\cdot)}} \sim \|\mathcal G^0_N(f)\|_{L^{p(\cdot)}}\sim \|\tilde{\mathcal G}_N(f)\|_{L^{p(\cdot)}}\\
&\sim \|\tilde{\mathcal G}^0_N(f)\|_{L^{p(\cdot)}}
\sim \|\mathcal M_{\psi_0}(f)\|_{L^{p(\cdot)}}\sim \|\mathcal M^\ast_{\psi_0}(f)\|_{L^{p(\cdot)}}\sim\|\psi^{\ast,\ast}_{0,A,B}(f)\|_{L^{p(\cdot)}},
\end{align*}
where the implicit equivalent positive constants are independent of $f$.
\end{thm}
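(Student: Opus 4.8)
The plan is to establish the chain of equivalences by a mix of direct pointwise comparisons between the maximal operators and a single "hard" estimate that closes the loop, following the template of the Fefferman--Stein grand maximal function theory adapted to the local and variable-exponent setting. The trivial pointwise inequalities already recorded in Definition \ref{s3d1}, namely $\mathcal G_N^0(f)\le\mathcal G_N(f)\le\tilde{\mathcal G}_N(f)$, give for free the implications $(4)\Rightarrow(2)\Rightarrow(3)$ and $(4)\Rightarrow(5)\Rightarrow(3)$ at the level of $L^{p(\cdot)}$ quasi-norms, since $L^{p(\cdot)}$ is monotone in the underlying function. Similarly, fixing a single admissible $\psi_0\in\mathcal D$ (after suitable normalization so that a dilate lies in $\mathcal D_{N,R}$) yields the pointwise bounds $\mathcal M_{\psi_0}(f)\lesssim\mathcal G_N^0(f)$ and $\mathcal M^\ast_{\psi_0}(f)\lesssim\tilde{\mathcal G}_N^0(f)$ and $\mathcal M_{\psi_0}(f)\le\mathcal M^\ast_{\psi_0}(f)\le\psi^{\ast,\ast}_{0,A,B}(f)$, so that $(3)\Rightarrow(6)$, $(5)\Rightarrow(7)$, and $(6)\Leftarrow(7)\Leftarrow(8)$ in terms of norms are essentially immediate. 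The remaining, genuinely substantive implications are: controlling the tangential Peetre-type maximal function $\psi^{\ast,\ast}_{0,A,B}(f)$ by $\mathcal M_{\psi_0}(f)$ (to fold $(8)$ back into $(6)$), and relating this circle of maximal functions to the square-function definition of $h^{p(\cdot)}$ in Definition \ref{s3d4}, i.e. $(6)\Rightarrow(1)$ and $(1)\Rightarrow(4)$.

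For the step $\psi^{\ast,\ast}_{0,A,B}(f)\lesssim\mathcal M_{\psi_0}(f)$ in $L^{p(\cdot)}$, I would use the standard Peetre--Fefferman--Stein trick: choose the exponent $r$ with $0<r<p^-$ and $A>n/r$, write $\psi_0$ in terms of a Calder\'on-type reproducing identity (using that $\widehat{\psi_0}(0)\neq0$ so one can build an auxiliary $\eta$ with $\sum_k\widehat{\eta}(2^{-k}\xi)\widehat{\psi_0}(2^{-k}\xi)=1$ up to the local truncation), and estimate $|(\psi_0)_j\ast f(x-y)|$ by a sum over scales $k\ge j$ of convolutions, each dominated by a shifted Hardy--Littlewood-type average, to arrive at the pointwise bound
\begin{align*}
\left[\psi^{\ast,\ast}_{0,A,B}(f)(x)\right]^r\lesssim \sum_{k\ge 0}2^{(j-k)\delta r}\,M\!\left(|(\psi_0)_k\ast f|^r\right)(x)
\end{align*}
for a suitable $\delta>0$; then applying the vector-valued maximal inequality of Lemma \ref{s2l1} (with exponent $p(\cdot)/r\in\mathcal P^0\cap LH$ and $q=2$, after summing the geometric factor) yields $\|\psi^{\ast,\ast}_{0,A,B}(f)\|_{L^{p(\cdot)}}\lesssim\|\mathcal M_{\psi_0}(f)\|_{L^{p(\cdot)}}$. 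The truncation to $j\in\mathbb Z_+$ and the factor $2^{B|y|}$ in the denominator are exactly what makes this the \emph{local} statement and keeps the sum over $k$ one-sided, so the argument is if anything cleaner than the homogeneous case. The passage between the square function in Definition \ref{s3d4} and the grand maximal functions is handled by the same discrete Calder\'on reproducing formula together with Remark \ref{s1r1} and \cite[Theorem 1.3]{Tan19}: expanding $f=\sum_{j\ge0}\varphi_j\ast\varphi_j\ast f$ (with the local convention $\varphi_0=\Phi$) and testing against $\psi\in\mathcal D_{N,R}$ reduces $\tilde{\mathcal G}_N(f)$ to a superposition of the building blocks $\varphi_j\ast f$, giving $(1)\Rightarrow(4)$; conversely $(6)\Rightarrow(1)$ follows by expressing each $\varphi_j\ast f$ through $\psi_0$-dilates and invoking the same vector-valued maximal bound.

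I expect the \emph{main obstacle} to be the implication $(6)\Rightarrow(1)$ (equivalently $\mathcal M_{\psi_0}(f)\in L^{p(\cdot)}\Rightarrow f\in h^{p(\cdot)}$), because it requires reconstructing the full Littlewood--Paley square function from a \emph{single} vertical maximal function; the technical heart is a good-$\lambda$ or reproducing-formula argument showing that $\mathcal G_{loc}(f)$ is pointwise dominated (after raising to a small power $r<p^-$) by $M\big((\mathcal M_{\psi_0}f)^r\big)$ up to acceptable error terms, which then closes via Lemma \ref{s2l1}. A secondary subtlety, specific to the variable-exponent framework, is that none of these reductions may use duality or a fixed $L^q$ scale: every "sum of tails" must be absorbed either by the geometric decay from smoothness of $\psi_0,\varphi$ or by the $l^2$-vector-valued maximal inequality, and one must verify throughout that the shifted exponent $p(\cdot)/r$ still lies in $\mathcal P^0\cap LH$, which it does since $LH$ and the bounds $0<p^-\le p^+<\infty$ are preserved under division by the constant $r$. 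Once the circle $(1)\Rightarrow(4)\Rightarrow(2)\Rightarrow(3)\Rightarrow(6)\Rightarrow(8)\Rightarrow(7)\Rightarrow(6)\Rightarrow(1)$ (with the easy norm comparisons filling in $(5)$) is assembled, the stated norm equivalences follow by tracking the implicit constants, all of which depend only on $n$, $N_0$, $p(\cdot)$, and the fixed bump functions, hence are independent of $f$.
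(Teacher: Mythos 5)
Your proposal and the paper take genuinely different routes. The paper proves none of the maximal-function comparisons from scratch in the variable-exponent setting: it quotes \cite[Section 9]{NS} and \cite[Theorem 1.3]{Tan19} for $\|f\|_{h^{p(\cdot)}}\sim\|\mathcal G_N(f)\|_{L^{p(\cdot)}}$, quotes \cite[Theorem 3.14, Corollary 3.15]{YY} for the fact that $\mathcal G_N$, $\mathcal G^0_N$, $\tilde{\mathcal G}_N$, $\tilde{\mathcal G}^0_N$, $\mathcal M_{\psi_0}$, $\mathcal M^\ast_{\psi_0}$ and $\psi^{\ast,\ast}_{0,A,B}$ all have equivalent $L^p_w$ norms for every $w\in A_\infty$, and then transfers those weighted equivalences to $L^{p(\cdot)}$ by Rubio de Francia extrapolation (Lemma \ref{s2l2}), applied to pairs such as $(\mathcal G_N(f)\chi_{B(0,R)},\mathcal G^0_N(f))$, with the truncation to $B(0,R)$ and a Fatou passage $R\to\infty$ inserted so that the ``$f\in L^{p(\cdot)}$'' hypothesis of the extrapolation lemma is satisfied. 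Your plan would instead re-derive the underlying pointwise and maximal-operator estimates directly in $L^{p(\cdot)}$ via Peetre-type maximal inequalities, a Calder\'on reproducing formula, and Lemma \ref{s2l1}. What extrapolation buys the paper is a very short proof that imports the whole $A_\infty$-weighted theory of Yang--Yang at essentially no cost; what your direct approach would buy is a self-contained argument not resting on pre-existing weighted estimates, at the price of being substantially longer and of repeating the hardest parts of Goldberg--Bui--Yang--Yang inside the variable-exponent framework.

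Two spots in your sketch do not yet close. The displayed Peetre estimate $\left[\psi^{\ast,\ast}_{0,A,B}(f)(x)\right]^r\lesssim\sum_{k\ge0}2^{(j-k)\delta r}M(|(\psi_0)_k\ast f|^r)(x)$ has the scale index $j$ free on the right-hand side but already absorbed by the supremum on the left, so it does not parse; the correct version bounds the fixed-scale Peetre quantity $\sup_{y}|(\psi_0)_j\ast f(x-y)|/(1+2^j|y|)^A$ by $C\bigl[M(|(\psi_0)_j\ast f|^r)(x)\bigr]^{1/r}$ for each $j$ (with $A>n/r$), and after taking $\sup_{j\in\mathbb Z_+}$ one obtains $\psi^{\ast,\ast}_{0,A,B}(f)\lesssim\bigl[M(\mathcal M_{\psi_0}(f)^r)\bigr]^{1/r}$ pointwise, which only needs the scalar boundedness of $M$ on $L^{p(\cdot)/r}$, not the $\ell^q$-vector-valued Lemma \ref{s2l1} with $q=2$ that you invoke. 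More seriously, you correctly identify $(6)\Rightarrow(1)$ as the crux but only gesture at ``a good-$\lambda$ or reproducing-formula argument''; this is exactly the step the paper avoids doing by hand by quoting \cite{YY} and extrapolating, and carrying it out in the local variable-exponent setting would be the bulk of a genuinely independent proof.
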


\begin{proof}
The proof of equivalence for the first two norms can be found in \cite[Section 9]{NS}
and \cite[Theorem 1.3]{Tan19}. We only need to prove that the rest of norms 
are equivalent. To end it, 
from \cite[Theorem 3.14, Corollary 3.15]{YY}, we know that
\begin{align*}
 \|\mathcal G_N(f)\|_{{L}^{p}_w} &\sim \|\mathcal G^0_N(f)\|_{{L}^{p}_w}\sim \|\tilde{\mathcal G}_N(f)\|_{{L}^{p}_w}\\
&\sim \|\tilde{\mathcal G}^0_N(f)\|_{{L}^{p}_w}\sim \|\mathcal M_{\psi_0}(f)\|_{{L}^{p}_w}\sim \|\mathcal M^\ast_{\psi_0}(f)\|_{{L}^{p}_w}\sim\|\psi^{\ast,\ast}_{0,A,B}(f)\|_{{L}^{p}_w},
\end{align*}
for every $w\in A_\infty$.
Notice that $p(\cdot) \in \mathcal{P}^0\cap LH$, by Lemma \ref{s2l2},
for $(\mathcal G_N(f)\chi_{B(0,R)}, \mathcal G^0_N(f))\in \mathcal{F}$ and
$\mathcal G_N(f)\chi_{B(0,R)}\in L^{p(\cdot)}$ with $0<R<\infty$, we have
$$\|\mathcal G_N(f)\chi_{B(0,R)}\|_{L^{p(\cdot)}}\leq C\|\mathcal G^0_N(f)\|_{L^{p(\cdot)}}.$$
If we take the limit as $R\rightarrow\infty$, then by Fatou’s lemma
$$\|\mathcal G_N(f)\|_{L^{p(\cdot)}}\leq C\|\mathcal G^0_N(f)\|_{L^{p(\cdot)}}.$$
Similarly, for $(\mathcal G^0_N(f)\chi_{B(0,R)}, \mathcal G_N(f))\in \mathcal{F}$ and
$\mathcal G^0_N(f)\in L^{p(\cdot)}$, we have
$$\|\mathcal G^0_N(f)\|_{L^{p(\cdot)}}\leq C\|\mathcal G_N(f)\|_{L^{p(\cdot)}}.$$
Repeating the same argument, we can get the desired result.
\end{proof}

\begin{rem}\label{s3r2}
If $f\in \mathcal{D'}$, $p(\cdot)\in \mathcal{P}\cap LH$, we want to stress that
the function spaces ${h}^{p(\cdot)}$ and ${L}^{p(\cdot)}$ are isomorphic to each
other. To see this, we only need to observe that in \cite
[Proposition 2.2]{Tang}, if $f\in A_p$ with $p\in(1,\infty),$
then $f\in L_w^p$ if and only if $f\in\mathcal D'$ and $\mathcal G_N^0(f)
\in L_w^p$ with $\|f\|_{L^p_w}\sim \|\mathcal G_N^0(f)\|_{L^p_w}$.
Then applying the \cite[Corollary 1.11]{CFMP}, we get that
$\|f\|_{L^{p(\cdot)}}\sim \|\mathcal G_N^0(f)\|_{L^{p(\cdot)}}$.
We also remark that the $h^{p(\cdot)}-$norm is stronger than the
topology of $\mathcal D'$; indeed, for any $\psi\in\mathcal D$
and $\mbox{supp}\;\psi\subset B_0=B(0,1)$, by Lemma
\ref{s2l3} and Lemma \ref{s2l5},
we have 
\begin{align*}
|\big<f,\psi\big>|^{p_-}&=|f\ast \tilde\psi(0)|^{p_-}
\le C\inf_{y\in B_0}\mathcal M_N^0(f)(y)^{p_-}
\le C\frac{1}{|B_0|}\int_{B_0}\mathcal M_N^0(f)(y)^{p_-}dy
\\
&\le C\|\mathcal M_N^0(f)\|_{L^p(\cdot)}^{p_-}\|\chi_{B_0}\|_{L^{(p(\cdot)/{p_-})'}}\le C\|f\|^{p_-}_{h^{p(\cdot)}},
\end{align*}
where $\tilde \psi(x)=\psi(-x)$.
We also remark that, as a consequence of \cite[Theorem 1.3]{Tan19}, 
$L^q\cap h^{p(\cdot)}$ is dense in $h^{p(\cdot)}$
for $1\le q<\infty$.

\end{rem}

We also obtain the completeness of $h^{p(\cdot)}$ that
are of interest in their own right. We have proved it implicitly in \cite{Tan19} by applying the Littlewood-Paley-Stein theory. Here we give a different approach.
\begin{prop}\label{s3p1}
Given $p(\cdot)\in \mathcal{P}^0\cap LH$, the space $h^{p(\cdot)}$ is 
complete with respect to the norm $\|\cdot\|_{h^{p(\cdot)}}$.
\end{prop}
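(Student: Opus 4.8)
The plan is to show every Cauchy sequence in $h^{p(\cdot)}$ converges by exploiting two facts established earlier: first, that the $h^{p(\cdot)}$-norm dominates the $\mathcal D'$-topology (the estimate displayed in Remark \ref{s3r2}), so that a Cauchy sequence in $h^{p(\cdot)}$ is automatically Cauchy in $\mathcal D'$ and hence has a distributional limit $f\in\mathcal D'$ by the completeness of $\mathcal D'$; and second, the maximal characterization of Theorem \ref{s3th1}, which lets me replace the defining Littlewood–Paley norm by $\|\mathcal G_N^0(\cdot)\|_{L^{p(\cdot)}}$ or $\|\mathcal M_{\psi_0}(\cdot)\|_{L^{p(\cdot)}}$, whichever is more convenient. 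Concretely, let $\{f_k\}\subset h^{p(\cdot)}$ be Cauchy; by Remark \ref{s3r2} it converges in $\mathcal D'$ to some $f\in\mathcal D'$, and it suffices to prove $f\in h^{p(\cdot)}$ with $\|f_k-f\|_{h^{p(\cdot)}}\to 0$.

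The key step is a lower-semicontinuity argument for the maximal function. Fix a test function $\psi\in\mathcal D_{N,R}$ and $t\in(0,1)$; then $\psi_t\ast f_k(x)\to\psi_t\ast f(x)$ for every $x$ because convergence in $\mathcal D'$ is tested against $\psi_t(x-\cdot)\in\mathcal D$. Taking the supremum over $\psi\in\mathcal D_{N,R}$ and $t\in(0,1)$ and using that a supremum of pointwise limits is bounded by $\liminf$ of the suprema, I get the pointwise bound $\mathcal G_N^0(f)(x)\le\liminf_{k\to\infty}\mathcal G_N^0(f_k)(x)$, and more usefully, for any fixed $m$, $\mathcal G_N^0(f-f_m)(x)\le\liminf_{k\to\infty}\mathcal G_N^0(f_k-f_m)(x)$. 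Now I invoke the Fatou property of the variable Lebesgue space $L^{p(\cdot)}$ (which follows directly from Definition \ref{s1de1} via the monotone convergence theorem applied to the modular): applying it to the nonnegative functions $g_k=\mathcal G_N^0(f_k-f_m)$ gives
\begin{equation*}
\|\mathcal G_N^0(f-f_m)\|_{L^{p(\cdot)}}\le\liminf_{k\to\infty}\|\mathcal G_N^0(f_k-f_m)\|_{L^{p(\cdot)}}\le C\liminf_{k\to\infty}\|f_k-f_m\|_{h^{p(\cdot)}},
\end{equation*}
where the last inequality is the norm equivalence of Theorem \ref{s3th1}. Given $\varepsilon>0$, choosing $m$ so large that $\|f_k-f_m\|_{h^{p(\cdot)}}<\varepsilon$ for all $k\ge m$ makes the right side $\le C\varepsilon$; hence $\|f-f_m\|_{h^{p(\cdot)}}\sim\|\mathcal G_N^0(f-f_m)\|_{L^{p(\cdot)}}\le C\varepsilon$. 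This shows both that $f-f_m\in h^{p(\cdot)}$, so $f=f_m+(f-f_m)\in h^{p(\cdot)}$, and that $f_m\to f$ in $h^{p(\cdot)}$; since $\{f_k\}$ is Cauchy, the whole sequence converges to $f$.

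The main obstacle is purely bookkeeping: one must make sure the maximal-function characterization of Theorem \ref{s3th1} is genuinely applicable to the difference $f-f_m$ with a constant independent of $m$ — this is fine because the equivalence constants there do not depend on the distribution — and one must justify the Fatou property of $L^{p(\cdot)}$ and the interchange of $\sup$ and $\liminf$; the latter is the elementary inequality $\sup_\alpha\lim_k a_{\alpha,k}\le\liminf_k\sup_\alpha a_{\alpha,k}$ applied with $\alpha$ ranging over $\mathcal D_{N,R}\times(0,1)$. No quantitative estimate beyond what Theorem \ref{s3th1} already supplies is needed, so the argument is short once these standard facts are in place.
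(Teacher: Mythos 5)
Your proof is correct, and it takes a genuinely different route from the paper's. The paper proves completeness via the standard criterion for quasi-normed spaces: it suffices to show that every series with $\sum_k\|f_k\|_{h^{p(\cdot)}}^{p^-}<\infty$ converges. It forms the partial sums $F_j$, uses the $p^-$-triangle inequality (Lemma~\ref{s2l4}) to see $\{F_j\}$ is Cauchy in $h^{p(\cdot)}$ and hence in $\mathcal D'$, and then asserts $\|f-F_j\|_{h^{p(\cdot)}}^{p^-}\le\sum_{k\ge j+1}\|f_k\|_{h^{p(\cdot)}}^{p^-}\to0$. You instead argue directly on an arbitrary Cauchy sequence $\{f_k\}$: you extract the distributional limit $f$, observe that for each fixed $\psi$ and $t$ the convolution $\psi_t\ast(f_k-f_m)(x)$ converges pointwise to $\psi_t\ast(f-f_m)(x)$, pass to the supremum to get $\mathcal G_N^0(f-f_m)\le\liminf_k\mathcal G_N^0(f_k-f_m)$ pointwise, and then invoke the Fatou property of $L^{p(\cdot)}$ together with the equivalence $\|\cdot\|_{h^{p(\cdot)}}\sim\|\mathcal G_N^0(\cdot)\|_{L^{p(\cdot)}}$ from Theorem~\ref{s3th1}.

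The two approaches are essentially dual formulations of the same underlying fact, but yours is somewhat more explicit. When the paper writes $\|\sum_k f_k\|^{p^-}_{h^{p(\cdot)}}\le\sum_k\|f_k\|^{p^-}_{h^{p(\cdot)}}$ for the \emph{infinite} sum, it is silently using exactly the Fatou/lower-semicontinuity step that you spell out: Lemma~\ref{s2l4} gives the inequality only for finitely many terms, and the passage to the full series requires lower semicontinuity of the quasi-norm along the distributional limit. So your version fills in a detail the paper's proof glosses over. The cost is that your argument relies on Theorem~\ref{s3th1}, whereas the paper's series argument could in principle be run directly on the Littlewood--Paley quasi-norm without invoking the grand maximal characterization. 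Both are valid; yours is the more self-contained of the two.
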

\begin{proof}
For any $\psi\in\mathcal D_{N,0}$, by the remark, if any sequence $\{f_k\}$
converges in $h^{p(\cdot)}$, then it also converges in
$\mathcal D'$.
We only need to consider the case $p^-\le 1$, the other case is similar but easier.  Given any sequence $\{f_k\}$ in $h^{p(\cdot)}$ fulfilling that
$$
\sum_k\|f_k\|_{h^{p(\cdot)}}^{p^-}<\infty.
$$
We denote that $F_j=\sum_{k=1}^j f_k$.
Then by Lemma \ref{s2l4}, we obtain that
the sequence $\{F_j\}$ is Cauchy in $h^{p(\cdot)}$ and so in $\mathcal D'$
and that
$$
\|f\|_{h^{p(\cdot)}}^{p^-}=\|\sum_kf_k\|_{h^{p(\cdot)}}^{p^-}
\le \sum_k\|f_k\|_{h^{p(\cdot)}}^{p^-}<\infty.
$$
Therefore,
$$
\|f-F_j\|_{h^{p(\cdot)}}^{p^-}\le 
\sum_{k\ge j+1}\|f_k\|_{h^{p(\cdot)}}^{p^-}\rightarrow 0.
$$
as $j$ tends to $\infty$. So the series converges to $f$ in $h^{p(\cdot)}$.
The proof is complete.
\end{proof}

 \section{Atomic characterizations of $h^{p(\cdot)}$}
 In this section, we will give new atomic decompositions of $h^{p(\cdot)}$ using special local $(p(\cdot),q)-$atom. For one thing, we will discuss the infinite atomic characterizations of $h^{p(\cdot)}$. For another, we will
 obtain the finite atomic decompositions of $h^{p(\cdot)}$.
 \subsection{Infinite local $(p(\cdot),q)-$atomic decompositions} 
The first aim of this chapter is to sharpen the atomic decomposition theory of
$h^{p(\cdot)}$, which have been established by the author \cite{Tan19}. In what follows, we introduce the new definitions for the special local $(p(\cdot),q)$-atom of $h^{p(\cdot)}$.
Denote $Q(x,\ell(Q))$ the closed cube centered at $x$ and of sidelength $\ell(Q)$. Similarly, given $Q=Q(x,\ell(Q))$ and $\lambda>0$, $\lambda Q$ means that the cube with the same center $x$ and with sidelength $\lambda\ell(Q)$. We denote $\tilde Q=2\sqrt{n}Q$ simply .  

\begin{defn}\label{s4d1}~\quad
Let $p(\cdot): \mathbb{R}^n\rightarrow (0,\infty)$,
$p(\cdot)\in \mathcal P^0$ and $1<q\leq \infty$.
Fix an integer $d\geq d_{p(\cdot)}\equiv \min\{d\in \mathbb{N}: p^-(n+d+1)>n\}$.
A function $a$ is said to be a special local $(p(\cdot),q)$-atom of $h^{p(\cdot)}$ if\\
(i)\;$supp\;a\subset Q$;\\
(ii)\;$\|a\|_{L^q}\leq |Q|^{1/q}$;\\
(iii)\;$\int_{\mathbb{R}^n} a(x)x^\alpha dx=0$ for all $|\alpha| \leq d$, if $|Q|<1$.
\end{defn}

In Definition \ref{s4d1}, if the condition $(iii)$ is replaced by the condition 
$(iii)'$: $\int_{\mathbb{R}^n} a(x)x^\alpha dx=0$ for all $|\alpha| \leq d$ and all cubes $Q\subset\mathbb R^n$, then the function $a$ is said to be a special
$(p(\cdot),q)$-atom of $H^{p(\cdot)}$.
Observe that for any $1<q<\infty$, all
$(p(\cdot),\infty)-$atoms are  $(p(\cdot),q)-$atoms, since that $|Q|^{-1/q}\|a\|_{L^q}\le\|a\|_{L^\infty}$.
The following theorems improves the previous atomic decomposition results in \cite{Tan19}.
\begin{thm}\label{s4th1}\quad Let $p(\cdot)\in \mathcal P^0\cap LH$. Suppose that $p^+<q\le \infty$ when $p^+\ge 1$ and $1<q\le\infty$ when
$p^+<1$. Fix an integer $d\geq d_{p(\cdot)}\equiv \min\{d\in \mathbb{N}: p^-(n+d+1)>n\}$.
Given countable collections of cubes $\{Q_j\}_{j=1}^\infty$,
of non-negative coefficients  $\{\lambda_j\}_{j=1}^\infty$ and of the special local $(p(\cdot),q)$-atoms $\{a_j\}_{j=1}^\infty$.
If 
\begin{equation*}
\left\|\sum_{j=1}^\infty\lambda_j\chi_{Q_j}\right\|_{L^{p(\cdot)}}
\end{equation*}
is finite. Then the series $f=\sum_{j=1}^\infty \lambda_ja_j$
converges in $h^{p(\cdot)}$ and satisfies
\begin{equation*}
\|f\|_{h^{p(\cdot)}}\le C\left\|\sum_{j=1}^\infty\lambda_j\chi_{Q_j}\right\|_{L^{p(\cdot)}}.
\end{equation*}
\end{thm}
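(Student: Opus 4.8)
The plan is to decompose the sum into a ``large-cube'' part and a ``small-cube'' part according to whether $|Q_j|\ge 1$ or $|Q_j|<1$, and then estimate the $h^{p(\cdot)}$-norm of each piece via the square-function (equivalently, the grand maximal) characterization from Definition~\ref{s3d4} and Remark~\ref{s1r1}. For the atoms with $|Q_j|<1$, these are genuine $H^{p(\cdot)}$-atoms (they carry the vanishing-moment condition up to order $d\ge d_{p(\cdot)}$), so the usual pointwise estimate $\mathcal G_N^0(\lambda_j a_j)(x)\lesssim \lambda_j \mathcal M(\chi_{Q_j})(x)^{s}$ for a suitable power $s>1$ (coming from the $L^q$-size and the moment cancellation, with the standard split into $x\in\tilde Q_j$ and $x\notin\tilde Q_j$) applies; summing in $j$ and invoking Lemma~\ref{s2l1} (vector-valued maximal inequality, with the exponent $p(\cdot)/s$, which lies in $\mathcal P^0\cap LH$ because $s$ can be taken close to $1$ and $p^-(n+d+1)>n$) controls $\|\mathcal G_N^0(\sum_{|Q_j|<1}\lambda_j a_j)\|_{L^{p(\cdot)}}$ by $C\|\sum_{|Q_j|<1}\lambda_j\chi_{Q_j}\|_{L^{p(\cdot)}}$. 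Here one actually wants the full grand maximal function $\mathcal G_N$ or $\tilde{\mathcal G}_N$; by Theorem~\ref{s3th1} this is comparable to the $h^{p(\cdot)}$-norm, so it suffices.

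For the atoms with $|Q_j|\ge 1$ no cancellation is available, but one does not need it: for the local grand maximal function $\mathcal G_N^0$ the convolution $\psi_t\ast(\lambda_j a_j)$ with $t\in(0,1)$ is supported within a bounded dilate of $Q_j$ (since $\psi$ has compact support and $t<1$), and the $L^q$-normalization gives, for $x$ in that fixed dilate, $\mathcal G_N^0(\lambda_j a_j)(x)\lesssim \lambda_j M(\chi_{Q_j})(x)^{1/q}\cdot |Q_j|^{\text{(bounded power)}}$, or more cleanly $\mathcal G_N^0(\lambda_j a_j)(x)\lesssim \lambda_j\, M\!\big(\chi_{Q_j}\big)(x)$ directly since $\|a_j\|_{L^\infty}$-type control is replaced by the $L^q$-average. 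Then one again applies Lemma~\ref{s2l6} (the variable Grafakos--Kalton lemma) or simply Lemma~\ref{s2l1} to pass from $\sum_j\chi_{c Q_j}(\text{average of }|\psi_t\ast a_j|)$ back to $\sum_j\lambda_j\chi_{Q_j}$ in $L^{p(\cdot)}$. Combining the two parts with the quasi-triangle inequality (Lemma~\ref{s2l4} when $p^-\le 1$, ordinary triangle inequality otherwise) yields $\|f\|_{h^{p(\cdot)}}\le C\|\sum_j\lambda_j\chi_{Q_j}\|_{L^{p(\cdot)}}$.

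Finally I would address convergence of the series in $h^{p(\cdot)}$: since the partial sums $S_M=\sum_{j\le M}\lambda_j a_j$ satisfy the same estimate uniformly and $\|\sum_{j>M}\lambda_j\chi_{Q_j}\|_{L^{p(\cdot)}}\to 0$ as $M\to\infty$ (by dominated convergence for the Luxemburg norm, using finiteness of the full sum), the sequence $\{S_M\}$ is Cauchy in $h^{p(\cdot)}$, which is complete by Proposition~\ref{s3p1}; hence $f$ is well defined in $h^{p(\cdot)}$ and the claimed bound holds in the limit.

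The main obstacle, as usual in the variable-exponent setting, is the step where one removes the pointwise power $M(\chi_{Q_j})^{s}$ inside the $\ell^1$-sum before taking the $L^{p(\cdot)}$-norm: one cannot simply pull the sum out, and must instead invoke the vector-valued maximal inequality Lemma~\ref{s2l1} at exponent $p(\cdot)/s$, which forces a careful choice of $s$ in terms of $d_{p(\cdot)}$, $p^-$, and (in the large-cube case) $q$, and requires checking that the rescaled exponent still lies in $\mathcal P^0\cap LH$. Getting the tail estimate $x\notin\tilde Q_j$ to produce a power $s>p^-$ (so that $p(\cdot)/s$ is admissible) is exactly where the hypothesis $d\ge d_{p(\cdot)}$ is used, and where the large-cube case instead leans on $q>p^+$ so that $q/p(\cdot)>1$ pointwise.
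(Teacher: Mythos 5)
Your proposal follows essentially the same route as the paper: split according to $|Q_j|\ge1$ vs.\ $|Q_j|<1$, use the grand-maximal characterization $\|f\|_{h^{p(\cdot)}}\sim\|\mathcal G_N^0 f\|_{L^{p(\cdot)}}$ from Theorem~\ref{s3th1}, bound the tail $x\notin\tilde Q_j$ by a Taylor-expansion / moment-cancellation argument yielding a power of $M\chi_{Q_j}$, use Lemma~\ref{s2l6} for the near-cube part and Lemma~\ref{s2l1} for the tail part, and finish the convergence of the series via Proposition~\ref{s3p1} and vanishing of $\|\sum_{j>M}\lambda_j\chi_{Q_j}\|_{L^{p(\cdot)}}$.

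One point deserves sharpening. Your stated pointwise bound $\mathcal G_N^0(\lambda_j a_j)(x)\lesssim \lambda_j\, M(\chi_{Q_j})(x)^{s}$ (for small cubes) and $\mathcal G_N^0(\lambda_j a_j)(x)\lesssim\lambda_j\,M(\chi_{Q_j})(x)$ (for large cubes) is not true pointwise for $L^q$-normalized atoms with $q<\infty$: on $\tilde Q_j$ the maximal function of $a_j$ need not be $\lesssim 1$, only its $L^q$-average over $Q_j$ is. The paper keeps the near-cube piece as $M(a_j)\chi_{\tilde Q_j}$ and then passes to $L^{p(\cdot)}$ precisely via the Grafakos--Kalton lemma (Lemma~\ref{s2l6}), which replaces the function by its normalized $L^q$-average over $Q_j$ -- this requires $(1\vee p^+)<q$. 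You do cite Lemma~\ref{s2l6} for the large-cube case, and the same device is what makes the small-cube near part rigorous as well; writing the near-cube estimate as $M(a_j)\chi_{\tilde Q_j}$ rather than a power of $M\chi_{Q_j}$ is the correct formulation. With that adjustment the argument matches the paper.
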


\begin{thm}\label{s4th2}\quad Let $p(\cdot)\in \mathcal P^0\cap LH$, $1<q\le\infty$ and $s\in (0,\infty)$. 
If $f\in h^{p(\cdot)}$, then there exists non-negative coefficients  $\{\lambda_j\}_{j=1}^\infty$ and the 
special local $(p(\cdot),q)$-atoms $\{a_j\}_{j=1}^\infty$ 
such that
$f=\sum_{j=1}^\infty \lambda_ja_j$,
where the series converges almost everywhere and in $\mathcal D'$, and that
\begin{equation*}
\left\|\left(\sum_{j=1}^\infty(\lambda_j\chi_{Q_j})^{s}\right)^{\frac{1}{s}}\right\|_{L^{p(\cdot)}}
\le C\|f\|_{h^{p(\cdot)}}.
\end{equation*}
\end{thm}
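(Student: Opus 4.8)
The plan is to realize the atomic decomposition through a Calderón–Zygmund–type stopping-time argument applied to the local grand maximal function, mimicking Goldberg's classical construction but with the key estimates replaced by their variable-exponent analogues from Section 2. First I would fix $q$ with $(1\vee p^+)<q<\infty$ (the case $q=\infty$ being treated separately or by a limiting/refinement argument), and by Theorem \ref{s3th1} I may work with $\mathcal G_N(f)$ in place of the square function, so that $\|\mathcal G_N(f)\|_{L^{p(\cdot)}}\sim\|f\|_{h^{p(\cdot)}}$. For each $k\in\mathbb Z$ set $\Omega_k=\{x:\mathcal G_N(f)(x)>2^k\}$; these are open, and for $k\le 0$ one must be careful because the local maximal function need not force $f$ to vanish at infinity — this is exactly where the localized theory diverges from $H^{p(\cdot)}$. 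I would perform a Whitney decomposition of each $\Omega_k$ into cubes $\{Q_{k,i}\}$, use a partition of unity subordinate to it, and define the local atoms $a_{k,i}$ as the usual pieces $(f-P_{k,i})\eta_{k,i}$ where $P_{k,i}$ is the projection onto polynomials of degree $\le d$ with respect to the smooth measure $\eta_{k,i}\,dx$ — but the moment condition is imposed \emph{only} when $\ell(Q_{k,i})<1$ (equivalently $|Q_{k,i}|<1$); for large Whitney cubes one simply does not subtract the polynomial, so no cancellation is needed, consistent with Definition \ref{s4d1}(iii). The coefficients are $\lambda_{k,i}=C\,2^k\|\chi_{Q_{k,i}}\|$-type quantities, normalized so that $|Q_{k,i}|^{-1/q}\|a_{k,i}\|_{L^q}\lesssim 1$, which follows from the standard $L^q$ estimate $\|a_{k,i}\|_{L^q}\lesssim 2^k|Q_{k,i}|^{1/q}$ obtained from the good-$\lambda$ behaviour of $\mathcal G_N$ on Whitney cubes.

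The convergence of $f=\sum_{k,i}\lambda_{k,i}a_{k,i}$ almost everywhere and in $\mathcal D'$ follows from the telescoping identity $\sum_i P_{k,i}\eta_{k,i}\chi_{\Omega_k}\to f$ in $\mathcal D'$ as $k\to\infty$ and $\to 0$ as $k\to-\infty$, using the density remark ($L^q\cap h^{p(\cdot)}$ dense in $h^{p(\cdot)}$, Remark \ref{s3r2}) to reduce to nice $f$ and then passing to the limit via the a priori bound. The main quantitative point is the norm estimate
\begin{equation*}
\left\|\left(\sum_{k,i}(\lambda_{k,i}\chi_{Q_{k,i}})^{s}\right)^{1/s}\right\|_{L^{p(\cdot)}}\le C\|f\|_{h^{p(\cdot)}}.
\end{equation*}
Here one uses that $\lambda_{k,i}\chi_{Q_{k,i}}\lesssim 2^k\chi_{Q_{k,i}}$ and that the Whitney cubes of a fixed level $k$ have bounded overlap, so $\sum_i(2^k\chi_{Q_{k,i}})^s\lesssim 2^{ks}\chi_{\Omega_k}$; then $\big(\sum_{k,i}(\lambda_{k,i}\chi_{Q_{k,i}})^s\big)^{1/s}\lesssim\big(\sum_k 2^{ks}\chi_{\Omega_k}\big)^{1/s}$, and the standard comparison $\big(\sum_k 2^{ks}\chi_{\Omega_k}\big)^{1/s}\lesssim \mathcal G_N(f)$ pointwise (valid for any $s>0$, since at a point $x$ with $\mathcal G_N(f)(x)\sim 2^{k_0}$ only the terms $k\le k_0$ contribute and they sum geometrically) reduces everything to $\|\mathcal G_N(f)\|_{L^{p(\cdot)}}\sim\|f\|_{h^{p(\cdot)}}$. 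Thus the variable-exponent norm is handled purely at the pointwise level, and no boundedness of $M$ on $L^{p(\cdot)}$ is even needed for this particular inequality — only for the comparison theorem already invoked.

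The step I expect to be the main obstacle is the treatment of the \emph{large} Whitney cubes (those with $\ell(Q_{k,i})\ge 1$), which occur precisely when $\Omega_k$ has a component of large measure, i.e. for the very negative values of $k$. For such cubes no moment cancellation is available, and one must verify both that $a_{k,i}=(f-0)\eta_{k,i}=f\eta_{k,i}$ still satisfies the size bound $\|a_{k,i}\|_{L^q}\lesssim 2^k|Q_{k,i}|^{1/q}$ — which requires controlling $f$ itself, not merely $f$ minus a polynomial, on a unit-or-larger cube via $\mathcal G_N(f)$, using that the local grand maximal function with the \emph{unit} truncation scale dominates local $L^q$ averages of $f$ — and that the tail contributions to $\mathcal D'$-convergence from these cubes are controlled. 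This is the genuinely "local" part of the argument and is where the restriction $d\ge d_{p(\cdot)}$ interacts with the geometry; once this is in place, the remaining estimates are routine adaptations of the $H^{p(\cdot)}$ arguments of \cite{S,NS} combined with Lemmas \ref{s2l4}, \ref{s2l6} and \ref{s2l7}.
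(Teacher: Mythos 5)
Your plan gets the scaffolding right — the level sets $\Omega_k=\{\mathcal G_N(f)>2^k\}$, the Whitney decomposition with partition of unity, the distinction between small and large cubes (moment conditions only when $\ell(Q)<1$), and the final pointwise comparison $\bigl(\sum_k 2^{ks}\chi_{\Omega_k}\bigr)^{1/s}\lesssim\mathcal G_N(f)$ for the coefficient estimate are all consistent with the paper's argument. You also correctly identify that the large Whitney cubes are a genuine sticking point.

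However, there is a real gap in the identification of the atoms. You propose taking the atoms to be (normalizations of) the single-level pieces $b_i^k=(f-P_{k,i})\eta_{k,i}$, asserting that $\|b_i^k\|_{L^q}\lesssim 2^k|Q_{k,i}|^{1/q}$ follows from the good-$\lambda$ behaviour of $\mathcal G_N$. That $L^q$ bound is false in general: on the Whitney cube $Q_{k,i}^*\subset\Omega_k$ you only control $\mathcal G_N(f)>2^k$ from below, not $f$ itself from above, and $f$ may be arbitrarily large there. The pieces $b_i^k$ satisfy only the maximal-function estimates of Lemma~\ref{s4l1}(e), not an $L^\infty$ or normalized $L^q$ size bound; moreover, $\sum_i\lambda_{k,i}b_i^k=b^k=f-g^k$ for each fixed $k$, so $\sum_{k,i}\lambda_{k,i}b_i^k$ is not a representation of $f$ at all. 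The paper's construction instead telescopes the \emph{good} parts: $f=\sum_k(g^{k+1}-g^k)$, and each $g^{k+1}-g^k$ is split as $\sum_i h_i^k$ where $h_i^k=b_i^k-\sum_j b_j^{k+1}\eta_i^k+\sum_{j\in F_2^{k+2}}P_{ij}^{k+1}\eta_j^{k+1}$ (Lemmas~\ref{s4l2}--\ref{s4l4}). It is only after this telescope, together with the observation that $|f|\le\mathcal G_N(f)\le 2^{k+1}$ a.e.\ on $\Omega_{k+1}^c$ and that the polynomial correction terms are uniformly $O(2^k)$ (Lemma~\ref{s4l3}), that one obtains the crucial $\|h_i^k\|_{L^\infty}\le C2^k$, from which the $(p(\cdot),q)$-atom normalization follows for all $q$ at once. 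Without introducing $h_i^k$ your atoms do not satisfy the size condition and the series does not reconstruct $f$, so the plan as stated cannot be completed.

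A secondary point: the paper starts from $f\in h^{p(\cdot)}\cap L^2$ and then uses density (via Remark~\ref{s3r2} and $\|f-g^k\|_{h^{p(\cdot)}}\to 0$) to pass to general $f$; your sketch alludes to this but the uniform convergence $g^k\to 0$ as $k\to-\infty$ relies on $\|g^k\|_{L^\infty}\lesssim 2^k$, which again is a property of the good parts $g^k$, not of $\sum_i P_{k,i}\eta_{k,i}\chi_{\Omega_k}$ as written. Once the atoms are taken to be $h_i^k/(C2^k)$ rather than $b_i^k/(C2^k)$, the remaining steps you outline — bounded overlap of Whitney cubes, geometric summation in $k$, and the reduction to $\|\mathcal G_N(f)\|_{L^{p(\cdot)}}$ — go through as you describe.
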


As an immediate corollary, we will get the following atom decomposition for $h^{p(\cdot)}$.
 
\begin{cor}\label{s4c1}\quad Let $p(\cdot)\in \mathcal P^0\cap LH$. Suppose that $p^+<q\le \infty$ when $p^+\ge 1$ and $1<q\le\infty$ when
$p^+<1$.
Then $f\in \mathcal D'$ is in $h^{p(\cdot)}$
if and only if there exists non-negative coefficients  $\{\lambda_j\}_{j=1}^\infty$ and the special local $(p(\cdot),q)$-atoms $\{a_j\}_{j=1}^\infty$ 
such that
$f=\sum_{j=1}^\infty \lambda_ja_j$,
where the series converges in $h^{p(\cdot)}$, and that
\begin{align*}
 \|f\|_{h^{p(\cdot)}}&\sim
 \inf\left\{\left\|\sum_{j=1}^\infty\lambda_j\chi_{Q_j}\right\|_{L^{p(\cdot)}}:\;
f=\sum_{j=1}^\infty \lambda_ja_j \right\}\\
&\sim
 \inf\left\{\left\|\left(\sum_{j=1}^\infty(\lambda_j\chi_{Q_j})^{p_-}\right)^{\frac{1}{p_-}}\right\|_{L^{p(\cdot)}}:\;
f=\sum_{j=1}^\infty \lambda_ja_j \right\},
\end{align*}
where the infimum is taken over all expressions as above.
\end{cor}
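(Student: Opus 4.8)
The plan is to obtain Corollary \ref{s4c1} as a direct consequence of Theorems \ref{s4th1} and \ref{s4th2}, so essentially all the work is already done and what remains is bookkeeping together with one elementary comparison. The only preliminary observation I would record is that, since $p_-=p^-\wedge1\le1$, subadditivity of $t\mapsto t^{p_-}$ applied pointwise yields
\begin{equation*}
\sum_{j=1}^\infty\lambda_j\chi_{Q_j}(x)\le\left(\sum_{j=1}^\infty\big(\lambda_j\chi_{Q_j}(x)\big)^{p_-}\right)^{1/p_-},\qquad x\in\mathbb R^n,
\end{equation*}
for every collection of cubes $\{Q_j\}_j$ and non-negative coefficients $\{\lambda_j\}_j$; by monotonicity of the Luxemburg quasi-norm $\|\cdot\|_{L^{p(\cdot)}}$ the same inequality holds for the corresponding $L^{p(\cdot)}$-quasi-norms, so in particular the second infimum in the statement always dominates the first.

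For the ``if'' direction I would simply invoke Theorem \ref{s4th1}: given any decomposition $f=\sum_j\lambda_ja_j$ into special local $(p(\cdot),q)$-atoms supported in cubes $Q_j$ with $\big\|\sum_j\lambda_j\chi_{Q_j}\big\|_{L^{p(\cdot)}}<\infty$, Theorem \ref{s4th1} gives $f\in h^{p(\cdot)}$ and $\|f\|_{h^{p(\cdot)}}\le C\big\|\sum_j\lambda_j\chi_{Q_j}\big\|_{L^{p(\cdot)}}$; taking the infimum over all such decompositions, and then using the pointwise comparison above, produces $\|f\|_{h^{p(\cdot)}}\le C\inf\{\cdots\}$ for both of the two expressions. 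For the ``only if'' direction, given $f\in h^{p(\cdot)}$ I would apply Theorem \ref{s4th2} with $s=p_-$ to obtain coefficients $\{\lambda_j\}_j$ and special local $(p(\cdot),q)$-atoms $\{a_j\}_j$ with supports $\{Q_j\}_j$ such that $f=\sum_j\lambda_ja_j$ in $\mathcal D'$ and $\big\|(\sum_j(\lambda_j\chi_{Q_j})^{p_-})^{1/p_-}\big\|_{L^{p(\cdot)}}\le C\|f\|_{h^{p(\cdot)}}$. By the comparison this forces $\big\|\sum_j\lambda_j\chi_{Q_j}\big\|_{L^{p(\cdot)}}<\infty$, so Theorem \ref{s4th1} applies to this very decomposition and shows its series converges in $h^{p(\cdot)}$; since the $h^{p(\cdot)}$-quasi-norm controls the topology of $\mathcal D'$ (Remark \ref{s3r2}, Proposition \ref{s3p1}), the $h^{p(\cdot)}$-limit must agree with the $\mathcal D'$-limit $f$, so $f=\sum_j\lambda_ja_j$ in $h^{p(\cdot)}$. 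This one decomposition realizes both infima up to the factor $C$, whence $\inf\{\cdots\}\le C\|f\|_{h^{p(\cdot)}}$ for each of the two expressions.

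Combining the two directions delivers the two claimed equivalences. The argument is essentially routine; the single point needing genuine care — rather than being a formality — is the promotion of the $\mathcal D'$-convergence supplied by Theorem \ref{s4th2} to convergence in $h^{p(\cdot)}$, which is precisely where Theorem \ref{s4th1}, the completeness of $h^{p(\cdot)}$ (Proposition \ref{s3p1}), and the embedding $h^{p(\cdot)}\hookrightarrow\mathcal D'$ are used to identify the limit.
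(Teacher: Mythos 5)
Your argument is correct and is precisely the route the paper intends when it labels this an ``immediate corollary'' of Theorems \ref{s4th1} and \ref{s4th2}: the pointwise inequality $\sum_j\lambda_j\chi_{Q_j}\le(\sum_j(\lambda_j\chi_{Q_j})^{p_-})^{1/p_-}$ links the two infima, Theorem \ref{s4th1} gives the lower bound $\|f\|_{h^{p(\cdot)}}\lesssim\inf$, Theorem \ref{s4th2} with $s=p_-$ gives the upper bound, and the identification of the $h^{p(\cdot)}$-limit with $f$ via the continuous embedding $h^{p(\cdot)}\hookrightarrow\mathcal D'$ (Remark \ref{s3r2}) upgrades the $\mathcal D'$-convergence to convergence in $h^{p(\cdot)}$. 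No gaps; the chain $\inf_1\le\inf_2\lesssim\|f\|_{h^{p(\cdot)}}\lesssim\inf_1$ closes the equivalence.
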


Now we are ready to prove Theorem \ref{s4th1}.\\
\noindent\textit{Proof of Theorem \ref{s4th1}.}\quad
Fix $
\psi\in\mathcal D_{N,0}$ with $\int \psi(x)dx\neq 0$.
By Theorem \ref{s3th1}, we know that
$$
\|f\|_{h^{p(\cdot)}}\sim\|\mathcal G_{N}^0(f)\|_{L^{p(\cdot)}}\equiv \left\|\sup_{0<t<1}|\psi_t\ast f|\right\|_{L^{p(\cdot)}}.
$$
Suppose that $\{\lambda_j\}_{j=1}^\infty$ has only a finite number of non-zero entries.
We consider the two cases for $Q$ as follows.
Case 1: $|Q|<1$. In this case, we claim that
$$
\mathcal G_{N}^0\left[\sum_{j=1}^\infty
\lambda_ja_j\right](x)\le C\sum_{j=1}^\infty
\lambda_j\left(M(a_j)(x)\chi_{\tilde Q}(x)
+M(\chi_{Q_j})(x)^{\frac{n+d+1}{n}}\right).
$$
To prove it, we only need to observe that
for any special $(p(\cdot),q)-$atom $a_j$ with $Q_j=Q(x_j,\ell(Q_j))$ and for all $x\in {\tilde {Q_j}}^c$,
\begin{equation*}
\mathcal G_{N}^0(a_j)(x)\le C\frac{\ell(Q_j)^{n+d+1}}
{(\ell(Q_j)+|x-x_j|)^{n+d+1}}.
\end{equation*}
In fact, for any $\psi\in\mathcal D_{N,0}$ and $0<t<1$,
let $P$ be the Taylor expansion of $\psi$ at the point
$\frac{(x-x_j)}{t}$ with degree $d$. By the Taylor remainder theorem, we have
\begin{equation*}
\left|\psi(\frac{x-y}{t})-P(\frac{x-x_j}{t})\right|
\le C \sum_{|\alpha|=d+1}
\Bigg|(D^\alpha \psi)
\Big(\frac{\theta(x-y)+(1-\theta)(x-x_j)}{t}\Big)\Bigg|
\Bigg|\frac{x_j-y}{t}\Bigg|^{d+1},
\end{equation*}
where multi-index $\alpha\in\mathbb Z_+^n$
and $\theta\in(0,1)$.
Since $0<t<1$ and $x\in \tilde {Q_j}^c$,
then we notice that $\mbox{supp}(a_j\ast\psi_t)
\subset B(x_j,2\sqrt{n})$ and that
$a_j\ast \psi_t(x)\neq 0$ implies that
$t>\frac{|x-x_j|}{2}$. Thus, for all $x\in \tilde {Q_j}^c$,
we have
\begin{eqnarray*}
|(a_j\ast \psi_t)(x)|
&=&\left|
t^{-n}\int_{\mathbb R^n} a_j(y)
\left(\psi(\frac{x-y}{t})-P(\frac{x-x_j}{t})\right)dy\right|\\
&\le&
C\chi_{\tilde Q_j}(x)|x-x_j|^{-(n+d+1)}\int_{Q_j}|a_j(y)||y-x_j|^{d+1}dy\\
&\le&
C\chi_{\tilde Q_j}(x)|x-x_j|^{-(n+d+1)}\ell(Q_j)^{n+d+1}.
\end{eqnarray*}
Hence, we have proved the claim. 
Since that ${M}$ is bounded on $L^q$ 
when $(1\vee p^+)<q<\infty$.
Applying the H\"older inequality
yields that
\begin{align*}
\begin{split}
  &\left(\frac{1}{|Q_{j}|}\int_{Q_{j}}
  |{M}(a_{j})(x)|^qdx\right)^{\frac{1}{q}}
  \le \frac{1}{|Q_{j}|^{1/q}}\|
  {M}(a_{j})\|_{L^{q}}\\
  &\le C\frac{1}{|Q_{j}|^{1/q}}\|a_{j}\|_{L^{q}}
  \le C.
\end{split}
\end{align*}

Choose $\tau$ such that $\tau p^->1$. Then by Lemma \ref{s2l6} and Lemma \ref{s2l1}, we get that
\begin{eqnarray*}
&&\left\|\mathcal G_{N}^0\left(\sum_{j=1}^\infty
\lambda_ja_j\right)\right\|_{L^{p(\cdot)}}\\
&\le& C\left\|\sum_{j=1}^\infty
\lambda_jM(a_j)\chi_{\tilde Q_j}\right\|_{L^{p(\cdot)}}
+C\left\|\sum_{j=1}^\infty\lambda_j\left(M\chi_{Q_j}\right)^{\frac{n+d+1}{n}}\right\|_{L^{p(\cdot)}}\\
&\le& C\left\|\sum_{j=1}^\infty
|\lambda_{j}|\left(\frac{1}{|Q_{j}|}\int_{Q_{j}}
|M(a_{j})(x)|^qdx\right)^{\frac{1}{q}}
\chi_{\tilde Q_{j}}\right\|_{L^{q(\cdot)}}
+C\left\|\sum_{j=1}^\infty\lambda_j\left(M\chi_{Q_j}\right)^{\frac{n+d+1}{n}}\right\|_{L^{p(\cdot)}}\\
&\le&C\left\|\sum_{j=1}^\infty
\lambda_{j}
\chi_{\tilde Q_{j}}\right\|_{L^{p(\cdot)}}
\le C\left\|\left(\sum_{j=1}^\infty
\lambda_{j}
M^\tau(\chi_{Q_{j}})\right)^{1/\tau}\right\|^{\tau}_{L^{\tau p(\cdot)}}
\le C\left\|\sum_{j=1}^\infty
\lambda_{j}
\chi_{Q_{j}}\right\|_{L^{p(\cdot)}}.
\end{eqnarray*}

When $q=\infty$, it is easy to see that
\begin{align*}
\begin{split}
 {M}(a_{j})(x)\le C\|a_{j}\|_{L^{\infty}}
  \le C.
\end{split}
\end{align*}
for $x\in \tilde Q_j$. Similarly, we have
\begin{eqnarray*}
&&\left\|\mathcal G_{N}^0\left(\sum_{j=1}^\infty
\lambda_ja_j\right)\right\|_{L^{p(\cdot)}}\\
&\le& C\left\|\sum_{j=1}^\infty
\lambda_jM(a_j)\chi_{\tilde Q_j}\right\|_{L^{p(\cdot)}}
+C\left\|\sum_{j=1}^\infty\lambda_j\left(M(\chi_{Q_j})\right)^{\frac{n+d+1}{n}}\right\|_{L^{p(\cdot)}}\\
&\le&C\left\|\sum_{j=1}^\infty
\lambda_{j}
\chi_{\tilde Q_{j}}\right\|_{L^{p(\cdot)}}
\le C\left\|\left(\sum_{j=1}^\infty
\lambda_{j}
M^\tau(\chi_{Q_{j}})\right)^{1/\tau}\right\|^{\tau}_{L^{\tau p(\cdot)}}
\le C\left\|\sum_{j=1}^\infty
\lambda_{j}
\chi_{Q_{j}}\right\|_{L^{p(\cdot)}}.
\end{eqnarray*}

Case 2: $|Q|\ge1$. Let $\bar Q_j=Q_j(x_j,\ell(Q)+2)$.
In this case, 
since we only need to consider $t\in(0,1)$,
we observe that
$$
\mathcal G_{N}^0\left[\sum_{j=1}^\infty
\lambda_ja_j\right](x)\le C\sum_{j=1}^\infty
|\lambda_j|M(a_j)(x)\chi_{Q_j^\ast}(x).
$$
Indeed, applying the fact that $\ell(Q)\ge1$ yields $\mbox{supp}\;(a_j\ast \psi_t)(x)\subset \bar Q_j
\subset 100 Q_j$.
Thus, by using similar but easier argument when $q<\infty$
we get that
\begin{eqnarray*}
&&\left\|\mathcal G_{N}^0\left(\sum_{j=1}^\infty
\lambda_ja_j\right)\right\|_{L^{p(\cdot)}}\\
&\le& C\left\|\sum_{j=1}^\infty
\lambda_jM(a_j)\chi_{\bar Q_j}\right\|_{L^{p(\cdot)}}\\
&\le& C\left\|\sum_{j=1}^\infty
|\lambda_{j}|\left(\frac{1}{|Q_{j}|}\int_{Q_{j}}
|M(a_{j})(x)|^qdx\right)^{\frac{1}{q}}
\chi_{\bar Q_{j}}\right\|_{L^{q(\cdot)}}\\
&\le&C\left\|\sum_{j=1}^\infty
\lambda_{j}
\chi_{100 Q_{j}}\right\|_{L^{p(\cdot)}}
\le C\left\|\sum_{j=1}^\infty
\lambda_{j}
\chi_{Q_{j}}\right\|_{L^{p(\cdot)}}.
\end{eqnarray*}
When $q=\infty$, we can get the desired result similarly.

Finally, we extend the result to the general case.
Given countable collections of cubes $\{Q_j\}_{j=1}^\infty$,
of non-negative coefficients  $\{\lambda_j\}_{j=1}^\infty$ and of the special local $(p(\cdot),q)$-atoms $\{a_j\}_{j=1}^\infty$.
Observe that
\begin{equation*}
\left\|\sum_{j=1}^\infty\lambda_j\chi_{Q_j}\right\|_{L^{p(\cdot)}}
<\infty
\end{equation*}
and that for $1\le m\le n<\infty$
\begin{equation*}
\left\|\mathcal G_{N}^0\left(\sum_{j=m}^n
\lambda_ja_j\right)\right\|_{L^{p(\cdot)}}
\le C\left\|\sum_{j=m}^n
\lambda_{j}
\chi_{Q_{j}}\right\|_{L^{p(\cdot)}}.
\end{equation*}

Hence, the sequence $\{\lambda_ja_j\}_{j=1}^\infty$ is Cauchy
in $h^{p(\cdot)}$ and converges to an element $f\in h^{p(\cdot)}$. From the Remark \ref{s3r2}, we know that the $h^{p(\cdot)}-$norm is stronger than the
topology of $\mathcal D'$. So the sequence $\{\lambda_ja_j\}_{j=1}^\infty$ is also converges to $f$ in $\mathcal D'$. 
By Fatou's lemma, we obtain
\begin{equation*}
\|f\|_{h^{p(\cdot)}}
\le C
\lim_{n\rightarrow\infty}\left\|\mathcal G_{N}^0\left(\sum_{j=1}^n
\lambda_ja_j\right)\right\|_{L^{p(\cdot)}}
\le C\left\|\sum_{j=1}^\infty\lambda_j\chi_{Q_j}\right\|_{L^{p(\cdot)}}
\end{equation*}
Therefore, we have completed the proof of this theorem.
$\hfill\Box$

Before we prove the next theorem, 
we revisit the Calder\'on-Zygmund decomposition associated with the local grand maximal function on $\mathbb R^n$. For more detail,
we refer to \cite[Section 4]{Tang} (also see \cite[Section 5]{B} and \cite[p.102-105,\;p.110-111]{St}).  
Let $d\in\mathbb N\cup\{0\}$ be some fixed integers
and $\mathcal P_d$ denote the linear space pf polynomials
in $n$ variables of degrees no more than $d$.  
For each $i$ and $P\in \mathcal P_d$, set
\begin{align*}
\|P\|_i\equiv\left[\frac{1}{\int\eta_i(x)dx}\int_{\mathbb R^n}|P(x)^2\eta_i(x)dx|\right]^{1/2}.
\end{align*}
Then $(\mathcal P_d,\|\cdot\|_i)$ is finite dimensional Hilbert
space. Let $f\in\mathcal D'$. Since $f$ induces a linear functional on $\mathcal P_d$ via 
$Q\longmapsto 1/\int\eta_i(x)dx\big<f,Q\eta_i\big>$,
by the Riesz lemma, there exists a unique polynomial 
$P_i\in \mathcal P_d$ for each $i$ such that for all 
$Q\in\mathcal P_d$,
\begin{align*}
\frac{1}{\int_{\mathbb R^n}\eta_i(x)dx}
\big<f, Q\eta_i\big>=\frac{1}{\int_{\mathbb R^n}\eta_i(x)dx}
\big<P_i,Q\eta_i\big>.
\end{align*}

\begin{lem}\label{s4l1}
Let $d\in \mathbb Z$ and $\lambda>0$.
Suppose that $f\in\mathcal D'$ and 
$\Omega=\{x\in\mathbb R: \mathcal G_Nf(x)>\lambda\}$.
Fix $a=1+2^{-(11+n)}$ and $b=1+2^{-(10+n)}$.
Then there exist collections of closed
cubes $\{Q_k\}$ whose interiors distance from
$\Omega^c$ such that $\Omega=\bigcup_{k}Q_k$
and $Q_k\subset aQ_k\subset bQ_k$.
Moreover, this gives us collections of 
$\{Q_k^\ast\}$ and functions $\{\eta_k\}\subset \mathcal D$, and a decomposition $f=g+b$, $b=\sum_kb_k$,
such that\\
(a)\; $\bigcup Q_k^\ast=\Omega$ and the $\{Q_k^\ast\}$ have
the bounded interior property: every point is contained in at
most a fixed number of the $\{Q_k^\ast\}$.\\
(b)\; $\chi_{\Omega}=\sum_k\eta_k$, with $0\le\eta\le1$
and each function $\eta_k$ is supported in $Q_k^\ast$.\\
(c)\; The distribution function $g$ fulfilling the following inequality:
\begin{align*}
\mathcal G_N^0(g)(x)\le \chi_{\Omega^c}\mathcal G_N^0(f)(x)
+C\lambda\sum_i\frac{\ell_i^{n+d+1}}
{(\ell_i+|x-x_i|)^{n+d+1}}.
\end{align*}\\
(d)\; If $f\in L^1_{loc}$, then $g\in L^\infty$ with $|g(x)|\le C\lambda$
for $a.e.\;x\in\mathbb R^n$.\\
(e)\; The distribution function $b_i$ is define by 
$b_i=(f-P_i)\eta_i$ if $\ell_i<1$, otherwise we set
$b_i=f\eta_i$ with a polynomial $P_i\in\mathcal P_d$
such that $\int_{\mathbb R^n}b_i(x)q(x)dx=0$ for
all $q\in\mathcal P_d$. Then 
if $x\in Q_i^\ast$, $\mathcal G_N^0(b_i)(x)
\le C\mathcal G_N(f)(x)$
and if $x\in (Q_i^\ast)^c$,
\begin{align*}
\mathcal G_N^0(b_i)(x)\le C
\frac{\lambda\ell_i^{n+d+1}}{(\ell_i+|x-x_i|)^{n+d+1}}.
\end{align*}
Hereafter, $x_i$ and $\ell_i$ denote the center and the sidelength of $Q_i$, respectively.
\end{lem}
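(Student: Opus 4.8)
The statement to prove is Lemma \ref{s4l1}, the Calder\'on-Zygmund decomposition associated with the local grand maximal function. Let me sketch a proof plan.

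\medskip

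\textit{Proof plan for Lemma \ref{s4l1}.}\quad The plan is to adapt the classical Calder\'on--Zygmund decomposition of Fefferman--Stein (as presented in \cite{St}) and its weighted local refinement of Tang \cite[Section 4]{Tang} to the present setting; since the parameters $a,b$ and the constants $2^{\pm(10+n)}$, $2^{\pm(11+n)}$ are chosen exactly as there, the construction itself is essentially identical and only the verification of the stated estimates needs care. First I would apply a Whitney decomposition to the open set $\Omega=\{\mathcal G_N f>\lambda\}$: this produces a collection of closed dyadic-type cubes $\{Q_k\}$ with pairwise disjoint interiors, $\Omega=\bigcup_k Q_k$, whose sidelengths $\ell_k$ are comparable to $\mathrm{dist}(Q_k,\Omega^c)$; dilating by the fixed factors gives $Q_k\subset aQ_k\subset bQ_k\subset\Omega$ with the bounded overlap property for $\{Q_k^\ast\}:=\{bQ_k\}$, so (a) holds. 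Then I would fix a smooth partition of unity $\{\eta_k\}$ subordinate to $\{Q_k^\ast\}$ with $\chi_\Omega=\sum_k\eta_k$, $0\le\eta_k\le1$, $\mathrm{supp}\,\eta_k\subset Q_k^\ast$, and $\|D^\alpha\eta_k\|_\infty\lesssim\ell_k^{-|\alpha|}$, giving (b). The polynomials $P_i$ are defined through the Riesz representation already displayed before the lemma statement, and $b_i$ is set to $(f-P_i)\eta_i$ when $\ell_i<1$ and to $f\eta_i$ when $\ell_i\ge1$ (in the latter case one still extracts a polynomial correction so that $\int b_i q=0$ for all $q\in\mathcal P_d$, which is possible because $Q_i^\ast\subset\Omega\subset\Omega^c{}^c$ has sidelength $\gtrsim1$ only matters for the atom normalization later). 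Set $b=\sum_i b_i$ and $g=f-b$.

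\medskip

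Next I would verify the pointwise maximal-function estimates, which is the technical heart. For (e): when $x\in Q_i^\ast$ one tests $b_i=(f-P_i)\eta_i$ (or $f\eta_i$) against $\psi_t$, $0<t<1$, splits $\psi_t\ast((f-P_i)\eta_i)=\psi_t\ast(f\eta_i)-\psi_t\ast(P_i\eta_i)$, bounds the first term by $\mathcal G_N(f)(x)$ after absorbing $\eta_i$ into the admissible test function (here the enlargement $R=2^{3(10+n)}$ in the definition of $\mathcal G_N$ is exactly what allows $\psi_t\cdot\eta_i$, suitably rescaled, to remain in $\mathcal D_{N,R}$), and controls the polynomial term $\psi_t\ast(P_i\eta_i)$ using the mean-value/size estimates $\|P_i\|_{L^\infty(Q_i^\ast)}\lesssim\lambda$ (proved exactly as in \cite[Section 4]{Tang}, from the fact that the interior of $Q_i$ lies at controlled distance from $\Omega^c$, so $\mathcal G_Nf\le\lambda$ at a nearby point). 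When $x\in(Q_i^\ast)^c$, the support condition $\mathrm{supp}\,b_i\subset Q_i^\ast$ and, in the case $\ell_i<1$, the vanishing moments $\int b_i q=0$ for $q\in\mathcal P_d$ let one subtract the degree-$d$ Taylor polynomial of $\psi_t$ and estimate the remainder by $C\lambda\ell_i^{n+d+1}(\ell_i+|x-x_i|)^{-(n+d+1)}$ — this is the same Taylor-remainder computation already carried out in the proof of Theorem \ref{s4th1} above. Summing over $i$ and using the bounded overlap of $\{Q_i^\ast\}$ gives (c) for $g$: indeed $\mathcal G_N^0(g)\le\mathcal G_N^0(f)+\sum_i\mathcal G_N^0(b_i)$, the terms with $x\in Q_i^\ast$ are absorbed into the $\chi_{\Omega^c}$ complement cleverly (since $x\in\Omega$ there, the first term $\chi_{\Omega^c}\mathcal G_N^0(f)$ contributes nothing and one keeps only the tail sum), and the terms with $x\in(Q_i^\ast)^c$ produce exactly the displayed tail $C\lambda\sum_i\ell_i^{n+d+1}(\ell_i+|x-x_i|)^{-(n+d+1)}$. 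For (d), when $f\in L^1_{loc}$ one uses the standard argument that on $\Omega$, $g=\sum_i P_i\eta_i$ (or $g=0$ on pieces with $\ell_i\ge1$) and $\|P_i\|_{L^\infty(Q_i^\ast)}\lesssim\lambda$ together with bounded overlap gives $|g|\le C\lambda$ on $\Omega$, while on $\Omega^c$ one has $g=f$ and $|f(x)|\le\mathcal G_Nf(x)\le\lambda$ for a.e.\ such $x$ (Lebesgue points), so $|g|\le C\lambda$ a.e.

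\medskip

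The main obstacle I expect is the bookkeeping in case (e) for cubes with $\ell_i\ge1$: there one uses $b_i=f\eta_i$ rather than $(f-P_i)\eta_i$, so $b_i$ has \emph{no} moment cancellation, and the off-diagonal decay $\ell_i^{n+d+1}(\ell_i+|x-x_i|)^{-(n+d+1)}$ must instead come purely from the support restriction $0<t<1$ together with $\mathrm{supp}\,\psi_t\ast b_i\subset bQ_i+B(0,t)\subset cQ_i$ (because $\ell_i\ge1$ forces the convolution support to stay within a fixed dilate), so for $x\in(Q_i^\ast)^c$ one needs $|x-x_i|\lesssim\ell_i$ for the term to be nonzero and the claimed bound degenerates to the trivial $\mathcal G_N^0(b_i)(x)\le C\mathcal G_N(f)(x)\chi_{cQ_i}(x)$, which is what is actually used downstream; I would state it in that form and note the consistency with the displayed inequality. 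A second, more routine, point requiring care is checking that all the enlarged test functions produced by multiplying an admissible $\psi\in\mathcal D_{N,0}$ by cutoffs $\eta_i$ and rescaling remain in $\mathcal D_{N,2^{3(10+n)}}$ after normalization — this is precisely why the inflated radius $2^{3(10+n)}$ was built into the definition of $\mathcal G_N$, and it is a finite chain of elementary estimates on derivatives of products. All of this is by now standard; I would present the Whitney/partition-of-unity construction briefly and then carry out the Taylor-remainder and polynomial-size estimates in the two regimes $x\in Q_i^\ast$ and $x\in(Q_i^\ast)^c$, referring to \cite[Section 4]{Tang} and \cite[pp.~102--105]{St} for the parts that transcribe verbatim.
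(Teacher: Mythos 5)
The paper does not prove this lemma; it merely states it and points the reader to \cite[Section 4]{Tang}, \cite[Section 5]{B}, and \cite[pp.~102--105, 110--111]{St}. Your overall plan --- Whitney decomposition, smooth partition of unity subordinate to the dilated cubes, the Riesz-representation construction of $P_i$, and the two-case ($\ell_i<1$ versus $\ell_i\ge1$) definition of $b_i$ --- is exactly the route those references take, so structurally you are on target. There are, however, two genuine defects in your sketch of the key estimate~(e).

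First, the parenthetical ``in the latter case one still extracts a polynomial correction so that $\int b_i q=0$'' flatly contradicts the lemma's own statement, which sets $b_i=f\eta_i$ with \emph{no} moment cancellation when $\ell_i\ge1$; you correct yourself in the ``main obstacle'' paragraph, but the slip shows the case split was not fully absorbed before writing.

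Second, and substantively, your resolution of the $\ell_i\ge1$, $x\in(Q_i^\ast)^c$ estimate does not give what the lemma asserts. You replace the stated bound $C\lambda\,\ell_i^{n+d+1}(\ell_i+|x-x_i|)^{-(n+d+1)}$ --- which on the thin shell where $\psi_t\ast b_i(x)\neq0$ is of order $C\lambda$ --- by the ``trivial'' bound $C\,\mathcal G_N(f)(x)\chi_{cQ_i}(x)$, and assert this is ``what is actually used downstream.'' It is not: for such $x$ one typically has $x\in\Omega$, so $\mathcal G_N f(x)>\lambda$ and your bound is strictly weaker; and the proof of Theorem~\ref{s4th2} invokes the $\lambda$-form off $Q_i^\ast$ precisely to generate the $2^k(M\chi_{Q_{i,k}^\ast})^{(n+d+1)/n}$ term that the vector-valued maximal inequality then controls. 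To obtain the $C\lambda$ bound one must rewrite $\psi_t\ast(f\eta_i)(x)$ as an admissible test of $f$ at a nearby point $\bar x\in\Omega^c$, where $\mathcal G_N f(\bar x)\le\lambda$; this is exactly why the radius $R=2^{3(10+n)}$ in the definition of $\mathcal G_N$ and the Whitney dilation constants $a=1+2^{-(11+n)}$, $b=1+2^{-(10+n)}$ are fixed the way they are, and the compatibility check (that the recentered support still fits in $B(\bar x, Rs)$ with $s<1$ for the cubes that actually occur) is the nontrivial content of part~(e) for large cubes. You label this ``a finite chain of elementary estimates'' and ``a more routine point,'' but it is precisely the step your outline leaves open.
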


In what follows, we denote $E_1^k=\{i\in\mathbb N:
|Q_i^k|\ge 1/(2^4n)\}$ and$E_2^k=\{i\in\mathbb N:
|Q_i^k|\le 1/(2^4n)\}$, $F_1^k=\{i\in\mathbb N:
|Q_i^k|\ge 1)\}$ and $F_2^k=\{i\in\mathbb N:
|Q_i^k|\le 1)\}$. 
Next, we will give the proof of Theorem \ref{s4th2}.\\
\noindent\textit{Proof of Theorem \ref{s4th2}.}\quad
First we assume that $f\in h^{p(\cdot)}\cap L^2$. 
For each $j\in\mathbb Z$, consider the level set
$\Omega_k=\{x\in\mathbb R^n: \mathcal G(f)(x)>2^k\}$.
Then it follows that $\Omega_{k+1}\subset\Omega_{k}$.
By Proposition \ref{s4l1},
$f$ admits
a Calder\'on-Zygmund  decomposition of degree $d$ and 
height $2^k$
associated with $\mathcal G_N(f)$,
$$
f=g^k+\sum_i b_i^k,\quad\mbox{in}\quad \mathcal D',
$$
where $b^k_i=(f-P^k_i)\eta^k_i$ if $\ell^k_i<1$ and $b^k_i=f\eta^k_i$
if $\ell^k_i\ge 1$.
We claim that $g^k\rightarrow f$ in both $h^{p(\cdot)}$
and $\mathcal D'$ as $k\rightarrow\infty$.
Indeed, applying Lemma \ref{s4l1}
\begin{align*}
\left\|f-g^k\right\|^{p_-}_{h^{p(\cdot)}}
&\le \left\|\sum_i \mathcal G_N^0b_i^k\right\|^{p_-}_{L^{p(\cdot)}}\\
&\le \left\|\sum_i \chi_{Q_{i,k}^\ast}\mathcal G_Nf\right\|^{p_-}_{L^{p(\cdot)}}
+\left\|\sum_i \frac{2^k\ell_i^{n+d+1}\chi_{(Q_{i,k}^\ast)^c}}
{(\ell_i+|\cdot-x_i|)^{n+d+1}}\right\|^{p_-}_{L^{p(\cdot)}}\\
&\le C\left\|\sum_i \chi_{Q_{i,k}^\ast}\mathcal G_Nf\right\|^{p_-}_{L^{p(\cdot)}}
+C\left\|\sum_i (M\chi_{Q_{i,k}^\ast})^{\frac{n+d+1}{n}}\right\|^{p_-}_{L^{p(\cdot)}}\\
&\le C\Bigg\| \chi_{\Omega_k}\mathcal G_Nf\Bigg\|^{p_-}_{L^{p(\cdot)}}
+C\left\|\sum_i 2^k\chi_{Q_{i,k}^\ast}\right\|^{p_-}_{L^{p(\cdot)}}\\
&\le C\Big\| \chi_{\Omega_k}\mathcal G_Nf\Big\|^{p_-}_{L^{p(\cdot)}}
+C\Big\|2^k\chi_{\Omega_k}\Big\|^{p_-}_{L^{p(\cdot)}}\\
&\le C\left\|\chi_{\Omega_k}\mathcal G_Nf\right\|^{p_-}_{L^{p(\cdot)}}
\end{align*}
Then we obtain that 
$$
\left\|f-g^k\right\|_{h^{p(\cdot)}}
=\left\|\sum_i b_i^k\right\|_{h^{p(\cdot)}}\rightarrow 0
$$
as $i\rightarrow\infty$.
Moreover, notice that
$\|g^k\|_{L^\infty}\le C2^j$ it follows that $g^k\rightarrow 0$ 
uniformly as $k\rightarrow\infty$.
Hence, 
\begin{align*}
f=\sum_{k=-\infty}^\infty(g^{k+1}-g^k)
\end{align*}
in $\mathcal D'$ and almost everywhere.
Indeed, since $\mbox{supp}(\sum_i b_i^k)\subset \Omega_k$, then $g^k\rightarrow f$ almost everywhere as 
$k\rightarrow\infty$.
In order to obtain the desired decomposition,
we need the following lemmas from \cite[p. 471]{Tang}.
We denote that a polynomial $P_{ij}^{k+1}$ is an orthogonal
projection of $(f-P_{j})^{k+1}\eta_i^j$ on $\mathcal P_s$,
namely, $P_{ij}^{k+1}$ is the unique polynomial of $\mathcal P_s$ such that, for any $P\in\mathcal P_s$,
\begin{align*}
\int_{\mathbb R^n}\Big(f(x)-P_j^{k+1}(x)\Big)\eta_i^k(x)
P(x)\eta_j^{k+1}(x)dx
=\int_{\mathbb R^n}P_{ij}^{k+1}(x)P(x)\eta_j^{k+1}(x)dx.
\end{align*}

\begin{lem}\label{s4l2}
If $Q_i^{k\ast}\cap Q_i^{(k+1)\ast}\neq  \varnothing,$
then $\ell_j^{k+1}\le 2^4\sqrt{n}\ell_i^k$
and $Q_j^{(k+1)\ast}\subset 2^6nQ_i^{k\ast}
\subset\Omega_k.$ Moreover, there exists
a positive $L$ such that for each $j\in\mathbb N$
the cardinality of 
$$\left\{i\in\mathbb N: Q_i^{k\ast}\cap
Q_j^{(k+1)\ast}\right\}\neq  \varnothing$$ is bounded by $L$.
\end{lem}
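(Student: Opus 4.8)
Lemma \ref{s4l2} is a standard geometric fact about the Calderón–Zygmund cubes at two consecutive levels. The plan is to exploit two ingredients: the "Whitney-type" property that the interior of each cube $Q_i^k$ has distance from $\Omega_k^c$ comparable to its sidelength (this is built into Lemma \ref{s4l1}), together with the nesting $\Omega_{k+1}\subset\Omega_k$.

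First I would fix the constants. Recall from Lemma \ref{s4l1} that each $Q_i^k$ satisfies $Q_i^k\subset aQ_i^k\subset bQ_i^k\subset\Omega_k$ with $a=1+2^{-(11+n)}$, $b=1+2^{-(10+n)}$, and that the interior of $Q_i^k$ has distance from $\Omega_k^c$ of the order of $\ell_i^k$; more precisely the Whitney construction gives $\mathrm{dist}(Q_i^k,\Omega_k^c)\sim\ell_i^k$ with explicit comparability constants. The same holds at level $k+1$ with $\Omega_{k+1}$ in place of $\Omega_k$. Now suppose $Q_i^{k\ast}\cap Q_j^{(k+1)\ast}\neq\varnothing$, where $Q_i^{k\ast}$ is a fixed dilate of $Q_i^k$ (of the form $cQ_i^k$ for a dimensional constant $c$). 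Since $Q_j^{(k+1)\ast}\subset\Omega_{k+1}\subset\Omega_k$, we may pick a point $z\in Q_j^{(k+1)\ast}$, hence $z\in\Omega_k$, and a point $z'$ in the intersection. The distance from $z'$ (which lies in the fixed dilate of $Q_i^k$) to $\Omega_k^c$ is at most $C\ell_i^k$ by the Whitney property; on the other hand, since $z'\in Q_j^{(k+1)\ast}$, its distance to $\Omega_{k+1}^c\supset\Omega_k^c$ is at least $c'\ell_j^{k+1}$ by the level-$(k+1)$ Whitney property. Combining gives $c'\ell_j^{k+1}\le C\ell_i^k$, i.e. $\ell_j^{k+1}\le 2^4\sqrt n\,\ell_i^k$ after tracking the explicit constants; the factor $2^4\sqrt n$ comes out of the arithmetic of $a$, $b$ and the dilation factor defining the starred cubes.

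For the inclusion $Q_j^{(k+1)\ast}\subset 2^6 n\,Q_i^{k\ast}\subset\Omega_k$: from the size comparison $\ell_j^{k+1}\le 2^4\sqrt n\,\ell_i^k$ and the fact that $Q_i^{k\ast}$ and $Q_j^{(k+1)\ast}$ share a point, any point of $Q_j^{(k+1)\ast}$ is within distance $\le \mathrm{diam}(Q_j^{(k+1)\ast})+\mathrm{diam}(Q_i^{k\ast})\le C\sqrt n\,\ell_i^k$ of the center of $Q_i^k$; choosing the dilation factor $2^6 n$ (which dominates $2\sqrt n$ times all the accumulated constants) forces $Q_j^{(k+1)\ast}\subset 2^6 n\,Q_i^{k\ast}$, and since $2^6 n\,Q_i^{k\ast}$ is still a bounded dilate of $Q_i^k$ whose closure meets $\Omega_k^c$ only if $\ell_i^k$ were too large — which is excluded because $\mathrm{dist}(Q_i^k,\Omega_k^c)\sim\ell_i^k$ — we get $2^6 n\,Q_i^{k\ast}\subset\Omega_k$. (One must check that $2^6 n$ times the dilation is still small enough relative to the Whitney separation; this is why the construction in Lemma \ref{s4l1} is done with the very small parameters $a,b$ close to $1$.) Finally, the uniform bound $L$ on $\#\{i:Q_i^{k\ast}\cap Q_j^{(k+1)\ast}\neq\varnothing\}$ follows from a volume-counting argument: every such $Q_i^k$ has $\ell_i^k\ge 2^{-4}(\sqrt n)^{-1}\ell_j^{k+1}$ by the reverse comparison (obtained by symmetry, swapping the roles and using $\Omega_{k+1}\subset\Omega_k$ once more to bound $\ell_i^k$ from below in terms of $\ell_j^{k+1}$ whenever they are close), all such $Q_i^k$ lie inside $2^6 n\,Q_j^{(k+1)\ast}$, and the $\{Q_i^k\}_i$ have the bounded-overlap property from Lemma \ref{s4l1}(a); hence their number is controlled by the ratio of volumes $|2^6 n\,Q_j^{(k+1)\ast}|/\inf_i|Q_i^k|\le C(n)$, giving a dimensional constant $L$.

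The main obstacle is purely bookkeeping: carrying the explicit constants $a=1+2^{-(11+n)}$, $b=1+2^{-(10+n)}$, the dilation factors hidden in the notation $Q^\ast$, and the factor $2\sqrt n$ relating cubes to balls, all the way through so that the stated bounds $2^4\sqrt n$ and $2^6 n$ come out exactly rather than with an unspecified "$C(n)$". I would do this by first proving the size comparison $\ell_j^{k+1}\le C\ell_i^k$ with an unoptimized $C$, then the inclusion with an unoptimized dilate, then at the very end substitute the numerical values of $a$, $b$ and the $\ast$-dilation to verify that $2^4\sqrt n$ and $2^6 n$ are admissible — citing \cite[p.\,471]{Tang} for the identical computation in the weighted setting, since the argument is insensitive to the variable exponent and only uses the geometry of $\Omega_k$.
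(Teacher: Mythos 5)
The paper does not actually prove Lemma~\ref{s4l2}: it imports it, together with Lemmas~\ref{s4l3} and \ref{s4l4}, verbatim from Tang \cite[p.~471]{Tang}, where the weighted case is treated and the argument is purely geometric, so your plan to sketch the Whitney argument and then cite Tang for the constants matches the paper's own strategy. Your treatment of the first two assertions is also sound in outline: the chain
$c'\ell_j^{k+1}\le\mathrm{dist}(z',\Omega_{k+1}^c)\le\mathrm{dist}(z',\Omega_k^c)\le C\ell_i^k$,
which uses $\Omega_{k+1}^c\supset\Omega_k^c$ and the Whitney separation at the two levels, is precisely how one obtains $\ell_j^{k+1}\lesssim\ell_i^k$, and the inclusion $Q_j^{(k+1)\ast}\subset 2^6n\,Q_i^{k\ast}\subset\Omega_k$ then follows from the size comparison plus the fact that the two starred cubes share a point.

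Your sketch of the bounded-cardinality clause, however, contains a genuine gap. You assert that ``all such $Q_i^k$ lie inside $2^6n\,Q_j^{(k+1)\ast}$'' and then run a volume-count $|2^6n\,Q_j^{(k+1)\ast}|/\inf_i|Q_i^k|\le C(n)$. That containment needs an \emph{upper} bound $\ell_i^k\lesssim\ell_j^{k+1}$, but the ``reverse comparison by symmetry'' you invoke cannot give it: the inclusion $\Omega_{k+1}\subset\Omega_k$ is asymmetric and only forces $\mathrm{dist}(z,\Omega_{k+1}^c)\le\mathrm{dist}(z,\Omega_k^c)$, i.e.\ the \emph{lower} bound $\ell_i^k\gtrsim\ell_j^{k+1}$. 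In fact a level-$k$ Whitney cube touching $Q_j^{(k+1)\ast}$ may be arbitrarily larger than $Q_j^{(k+1)\ast}$ --- the point can lie close to $\partial\Omega_{k+1}$ but deep inside $\Omega_k$ --- and such a cube is not contained in any fixed dilate of $Q_j^{(k+1)\ast}$. The correct mechanism is that for \emph{every} $i$ in the index set and every $z'\in Q_i^{k\ast}\cap Q_j^{(k+1)\ast}$ one has $\ell_i^k\sim\mathrm{dist}(z',\Omega_k^c)$, and since the Lipschitz function $z\mapsto\mathrm{dist}(z,\Omega_k^c)$ varies by at most $\mathrm{diam}(Q_j^{(k+1)\ast})\lesssim\ell_j^{k+1}\lesssim\min_i\ell_i^k$ across $Q_j^{(k+1)\ast}$, all the $\ell_i^k$ are mutually comparable to a single scale $D_j:=\mathrm{dist}(Q_j^{(k+1)\ast},\Omega_k^c)$, which may exceed $\ell_j^{k+1}$. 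The cubes then all lie within distance $O(D_j)$ of $Q_j^{(k+1)\ast}$, and the bounded overlap of the level-$k$ Whitney family from Lemma~\ref{s4l1}(a), applied at scale $D_j$ rather than $\ell_j^{k+1}$, gives the dimensional bound $L$. Your volume ratio, as written, is taken against the wrong reference cube and would not close the argument.
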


\begin{lem}\label{s4l3}
If $0<\ell_j^{k+1}<1$,
$$
\sup_{y\in\mathbb R^n}\left|P_{ij}^{k+1}(y)\eta_j^{k+1}\right|
\le C2^{k+1}.
$$
\end{lem}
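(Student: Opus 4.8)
The plan is to prove Lemma \ref{s4l3} by a direct argument exploiting the uniform bound on the original atom-like pieces and the fact that polynomials of bounded degree on a cube have comparable $L^\infty$, $L^1$, and $L^2$ norms (with constants depending only on $n$, $d$, $s$). First I would recall the defining identity for $P_{ij}^{k+1}$: for every $P\in\mathcal P_s$,
\begin{align*}
\int_{\mathbb R^n}\bigl(f(x)-P_j^{k+1}(x)\bigr)\eta_i^k(x)P(x)\eta_j^{k+1}(x)\,dx
=\int_{\mathbb R^n}P_{ij}^{k+1}(x)P(x)\eta_j^{k+1}(x)\,dx.
\end{align*}
Choosing $P=P_{ij}^{k+1}$ and using that $0\le\eta_i^k\le1$, $\mathrm{supp}\,\eta_j^{k+1}\subset Q_j^{(k+1)\ast}$, I get
\begin{align*}
\int_{Q_j^{(k+1)\ast}}\bigl|P_{ij}^{k+1}(x)\bigr|^2\eta_j^{k+1}(x)\,dx
\le \int_{Q_j^{(k+1)\ast}}\bigl|f(x)-P_j^{k+1}(x)\bigr|\,\bigl|P_{ij}^{k+1}(x)\bigr|\,dx .
\end{align*}
The weight $\eta_j^{k+1}$ is $\gtrsim 1$ on a fixed proportion of $Q_j^{(k+1)\ast}$ (by the standard construction of the partition of unity), so the left side is $\gtrsim \|P_{ij}^{k+1}\|_i^2$, or equivalently $\gtrsim |Q_j^{(k+1)\ast}|^{-1}\bigl(\int_{Q_j^{(k+1)\ast}}|P_{ij}^{k+1}|\bigr)^2$ after invoking norm equivalence for $\mathcal P_s$ on a cube.

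Next I would estimate the right-hand side. On the support, $|P_{ij}^{k+1}(x)|\le \|P_{ij}^{k+1}\|_{L^\infty(Q_j^{(k+1)\ast})}$, which by norm-equivalence is $\lesssim |Q_j^{(k+1)\ast}|^{-1}\int_{Q_j^{(k+1)\ast}}|P_{ij}^{k+1}|$. It therefore suffices to bound $\int_{Q_j^{(k+1)\ast}}|f-P_j^{k+1}|$. This is exactly the quantity controlled by the Calderón–Zygmund decomposition at level $2^{k+1}$: the bad part $b_j^{k+1}=(f-P_j^{k+1})\eta_j^{k+1}$ (when $\ell_j^{k+1}<1$) satisfies, via Lemma \ref{s4l1}(e) and the fact that $\mathcal G_N^0$ dominates a suitable average, $\frac{1}{|Q_j^{(k+1)\ast}|}\int_{Q_j^{(k+1)\ast}}|f-P_j^{k+1}|\,\eta_j^{k+1}\lesssim 2^{k+1}$; more directly, one uses that $P_j^{k+1}$ is the Riesz projection of $f$ against $\mathcal P_d$ with weight $\eta_j^{k+1}$, that $\|P_j^{k+1}\eta_j^{k+1}\|_\infty\lesssim 2^{k+1}$ (the analogue of Lemma \ref{s4l3} at the single-index level, already in \cite[p.471]{Tang}), and that by definition of $\Omega_{k+1}$ one has the pointwise comparison $\inf_{Q_j^{(k+1)\ast}}$-type control giving $\frac{1}{|Q_j^{(k+1)\ast}|}\int|f|\eta_j^{k+1}\lesssim 2^{k+1}$ since $Q_j^{(k+1)\ast}$ meets $\Omega_{k+1}^c$. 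Combining, $\frac{1}{|Q_j^{(k+1)\ast}|}\int_{Q_j^{(k+1)\ast}}|f-P_j^{k+1}|\lesssim 2^{k+1}$.

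Putting the two sides together: writing $m:=|Q_j^{(k+1)\ast}|^{-1}\int_{Q_j^{(k+1)\ast}}|P_{ij}^{k+1}|$, the displayed inequality reads $c\,|Q_j^{(k+1)\ast}|\,m^2 \lesssim |Q_j^{(k+1)\ast}|\cdot 2^{k+1}\cdot m$, hence $m\lesssim 2^{k+1}$, and therefore $\sup_{y\in\mathbb R^n}|P_{ij}^{k+1}(y)\eta_j^{k+1}(y)|\le \|P_{ij}^{k+1}\|_{L^\infty(Q_j^{(k+1)\ast})}\lesssim m\lesssim 2^{k+1}$, which is the claim. The constant depends only on $n$, $d$, $s$ and the fixed structural constants of the Calderón–Zygmund decomposition (the dilation factors $a,b$ and the bounded-overlap constant $L$ from Lemma \ref{s4l2}), not on $i,j,k,f$.

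The main obstacle I anticipate is the bookkeeping in the lower bound $\int_{Q_j^{(k+1)\ast}}|P_{ij}^{k+1}|^2\eta_j^{k+1}\gtrsim \|P_{ij}^{k+1}\|_{L^\infty}^2|Q_j^{(k+1)\ast}|$: one must know that $\eta_j^{k+1}$ is bounded below by a fixed positive constant on a subcube of comparable size, which is a property of the specific partition of unity in the construction (it follows from $\chi_{\Omega_{k+1}}=\sum\eta_j^{k+1}$ together with the bounded-overlap property, so finitely many $\eta$'s sum to $1$ and at least one is $\ge 1/L$ on each point, but getting a full subcube requires the explicit smooth construction of the $\eta_j$ from the $Q_j^{(k+1)\ast}$ in \cite[Section 4]{Tang}). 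Once that geometric fact is in hand the rest is just the norm equivalence of $\mathcal P_s$ on cubes plus the Cauchy–Schwarz step above.
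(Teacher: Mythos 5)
The paper does not actually prove Lemma~\ref{s4l3}; it simply imports it from \cite[p.~471]{Tang}. The standard proof there (and in Bownik's memoir and Stein's book, which are the common sources) does \emph{not} go through an $L^1$-average of $|f-P_j^{k+1}|$ as you propose, and your argument has a genuine gap precisely at that point. You claim
$$
\frac{1}{|Q_j^{(k+1)\ast}|}\int_{Q_j^{(k+1)\ast}}|f(x)-P_j^{k+1}(x)|\,dx\lesssim 2^{k+1},
$$
justifying it by ``$Q_j^{(k+1)\ast}$ meets $\Omega_{k+1}^c$'' together with the fact that $\mathcal G_N^0$ dominates a suitable average. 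But the grand maximal function at a point $y\in\Omega_{k+1}^c$ only controls pairings $\langle f-P_j^{k+1},\Phi\rangle$ for \emph{smooth} test functions $\Phi$ (suitably normalized and localized); it does not control $\int_Q|f-P_j^{k+1}|$ because the absolute value destroys the smooth-pairing structure. Indeed $Q_j^{(k+1)\ast}\subset\Omega_{k+1}$, where $\mathcal G_N f>2^{k+1}$, so Lemma~\ref{s4l1}(e) gives no upper bound on $\mathcal G_N^0(b_j^{k+1})$ inside the cube, and in general $\frac1{|Q|}\int_Q|f-P|$ is comparable to a Hardy--Littlewood average, which is \emph{not} dominated by the grand maximal function of $f-P$ when the function changes sign. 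Thus the decisive estimate of your plan is unavailable.

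The wrong turn occurs the moment you set $P=P_{ij}^{k+1}$ and then pass absolute values inside the integral, replacing the smooth weight $P_{ij}^{k+1}\eta_i^k\eta_j^{k+1}$ by $\|P_{ij}^{k+1}\|_\infty\cdot\chi_{Q_j^{(k+1)\ast}}$. The correct route keeps the smooth test function intact: for an arbitrary $P\in\mathcal P_s$ with $\|P\|_j$ normalized, the orthogonality identity gives
$$
\frac{1}{\int\eta_j^{k+1}}\langle P_{ij}^{k+1},P\eta_j^{k+1}\rangle
=\frac{1}{\int\eta_j^{k+1}}\langle f-P_j^{k+1},\,P\,\eta_i^k\eta_j^{k+1}\rangle,
$$
and one observes that $(\int\eta_j^{k+1})^{-1}P\,\eta_i^k\eta_j^{k+1}$, after dilation by $\ell_j^{k+1}$ and translation by $x_j^{k+1}$, is (a fixed multiple of) a member of $\mathcal D_N$, with uniform bounds on its derivatives. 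Pairing $f$ against this and evaluating the (non-tangential) grand maximal function at a point $y$ within distance $\lesssim\ell_j^{k+1}$ of $Q_j^{(k+1)\ast}$ lying in $\Omega_{k+1}^c$ yields $\lesssim 2^{k+1}$, while $\langle P_j^{k+1},P\eta_i^k\eta_j^{k+1}\rangle$ is handled by the single-index bound $\|P_j^{k+1}\eta_j^{k+1}\|_\infty\lesssim 2^{k+1}$. Taking the supremum over normalized $P$ then controls $\|P_{ij}^{k+1}\|_j$, and polynomial norm-equivalence on cubes converts this to $\sup|P_{ij}^{k+1}\eta_j^{k+1}|\lesssim 2^{k+1}$. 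Your overall scaffolding (orthogonality, polynomial norm-equivalence, dependence of constants only on $n,d,s$ and the CZ structural constants) is sound; the fix is to avoid the absolute values inside the integral and use the full smoothness of $P\,\eta_i^k\eta_j^{k+1}$ rather than trying to bound $\int_Q|f-P_j^{k+1}|$.
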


\begin{lem}\label{s4l4}
For any $k\in\mathbb Z$,
$$
\sum_{i\in\mathbb N}\left(\sum_{j\in F_2^{k+1}}P_{ij}^{k+1}
\eta_j^{k+1}\right)=0,
$$
where the series converges both in $\mathcal D'$
and pointwisely.
\end{lem}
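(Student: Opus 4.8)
The plan is to reduce the lemma to the polynomial identity $\sum_{i\in\mathbb N}P_{ij}^{k+1}=0$ for each fixed $k\in\mathbb Z$ and each $j\in F_2^{k+1}$; everything else is then immediate from the partition-of-unity structure. Recall that at this stage $f\in h^{p(\cdot)}\cap L^2\subset L^1_{\mathrm{loc}}$, so all the integrals below are genuine, and that $P_{ij}^{k+1}$ is by construction the unique element of $\mathcal P_s$ with
$$
\int_{\mathbb R^n}P_{ij}^{k+1}(x)P(x)\eta_j^{k+1}(x)\,dx
=\int_{\mathbb R^n}\big(f(x)-P_j^{k+1}(x)\big)\eta_i^k(x)P(x)\eta_j^{k+1}(x)\,dx
\qquad(P\in\mathcal P_s).
$$
For a fixed $j$ this right-hand side vanishes unless $Q_i^{k\ast}\cap Q_j^{(k+1)\ast}\neq\varnothing$ (otherwise $\eta_i^k\eta_j^{k+1}\equiv0$ on the support), so by Lemma \ref{s4l2} at most $L$ of the polynomials $\{P_{ij}^{k+1}\}_{i\in\mathbb N}$ are nonzero; this makes it legitimate to sum the identity over $i$.

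Summing over $i\in\mathbb N$ and invoking $\sum_{i\in\mathbb N}\eta_i^k=\chi_{\Omega_k}$ together with $\mathrm{supp}\,\eta_j^{k+1}\subset Q_j^{(k+1)\ast}\subset\Omega_{k+1}\subset\Omega_k$ — so that $\sum_i\eta_i^k\equiv1$ on $\mathrm{supp}\,\eta_j^{k+1}$ — I obtain, for every $P\in\mathcal P_s$,
$$
\int_{\mathbb R^n}\Big(\sum_{i\in\mathbb N}P_{ij}^{k+1}(x)\Big)P(x)\eta_j^{k+1}(x)\,dx
=\int_{\mathbb R^n}\big(f(x)-P_j^{k+1}(x)\big)\eta_j^{k+1}(x)P(x)\,dx
=\int_{\mathbb R^n}b_j^{k+1}(x)P(x)\,dx=0,
$$
the last equality being the moment condition of Lemma \ref{s4l1}(e): since $j\in F_2^{k+1}$ we have $\ell_j^{k+1}<1$, so $b_j^{k+1}=(f-P_j^{k+1})\eta_j^{k+1}$ annihilates $\mathcal P_s$. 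Hence $\sum_{i\in\mathbb N}P_{ij}^{k+1}$, which lies in $\mathcal P_s$, is orthogonal to all of $\mathcal P_s$, in particular to itself, with respect to the bilinear form $(P,Q)\mapsto\int_{\mathbb R^n}PQ\,\eta_j^{k+1}\,dx$. Since $\eta_j^{k+1}\ge0$ is strictly positive on a nonempty open subset of $Q_j^{(k+1)\ast}$ and a polynomial that vanishes on an open set vanishes identically, this form is positive definite on $\mathcal P_s$ (this is exactly why $(\mathcal P_s,\|\cdot\|)$ was noted to be a genuine Hilbert space), so $\sum_{i\in\mathbb N}P_{ij}^{k+1}=0$.

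Finally, at any $x\in\mathbb R^n$ only finitely many $\eta_j^{k+1}(x)$, $j\in F_2^{k+1}$, are nonzero by the bounded interior property of $\{Q_j^{(k+1)\ast}\}$ in Lemma \ref{s4l1}(a), and for each such $j$ the $i$-sum is finite, so the double sum at $x$ is finite and can be rearranged freely:
$$
\sum_{i\in\mathbb N}\Big(\sum_{j\in F_2^{k+1}}P_{ij}^{k+1}\eta_j^{k+1}\Big)(x)
=\sum_{j\in F_2^{k+1}}\Big(\sum_{i\in\mathbb N}P_{ij}^{k+1}\Big)(x)\,\eta_j^{k+1}(x)=0,
$$
which is the claimed pointwise convergence to $0$; convergence in $\mathcal D'$ then follows by dominated convergence, the partial sums being controlled by $C2^{k+1}\chi_{\Omega_{k+1}}\in L^1_{\mathrm{loc}}$ thanks to the uniform bound $|P_{ij}^{k+1}\eta_j^{k+1}|\le C2^{k+1}$ of Lemma \ref{s4l3}. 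This argument is essentially algebraic, and the only points that require genuine care — and the place I would expect the bookkeeping to be most delicate — are the finiteness of the $i$-sum for fixed $j$ (Lemma \ref{s4l2}) and the matching of the degree of $\mathcal P_s$ with the polynomial degree appearing in the moment condition of Lemma \ref{s4l1}(e), which is precisely what forces the right-hand side above to vanish.
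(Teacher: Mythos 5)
Your proof is correct and is essentially the standard argument from the reference the paper cites (Tang \cite[p.~471]{Tang}); the paper itself does not reproduce a proof. Summing the defining relation of $P_{ij}^{k+1}$ over $i$, using $\sum_i\eta_i^k\equiv1$ on $\mbox{supp}\,\eta_j^{k+1}\subset\Omega_{k+1}\subset\Omega_k$ together with the vanishing moments of $b_j^{k+1}=(f-P_j^{k+1})\eta_j^{k+1}$, shows that $\sum_iP_{ij}^{k+1}\in\mathcal P_s$ is null for the positive-definite form weighted by $\eta_j^{k+1}$, hence zero for each $j\in F_2^{k+1}$; your justification of the reordering and of the $\mathcal D'$-convergence via Lemmas \ref{s4l2}, \ref{s4l3} and the bounded interior property is also the correct one.
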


By Lemma \ref{s4l4} and $\sum_i\eta_i^k=\chi_{\Omega_k}$,
we have
\begin{align*}
g^{k+1}-g^k&=\Big(f-\sum_jb_j^{k+1}\Big)-\Big(f-\sum_ib_i^k\Big)\\
&=\sum_i b_i^k-\sum_j b_j^{k+1}\\
&=\sum_{i\in\mathbb N}\left( b_i^k-\sum_j b_j^{k+1}\eta_i^k+\sum_{j\in F_2^{k+2}}P_{ij}^{k+1}\eta_j^{k+1}\right)\\
&\equiv \sum_{i\in\mathbb N}h_i^k,
\end{align*}
where the series converges both in $\mathcal D'$ and almost everywhere.
For almost everywhere $x\in (\Omega_{k+1})^c$,
\begin{align*}
|f(x)|\le \mathcal G_N(f)(x)\le 2^{k+1}.
\end{align*}
By Lemma \ref{s4l1} and Lemma  
\ref{s4l4}, for all $i\in\mathbb N$
\begin{align*}
\|h_i^k\|_{L^\infty}\le C2^k.
\end{align*}
Next, we consider three cases as follows.\\
{\it Case 1.} When $i\in F_1^k$, we rewrite $h_i^k$ into
\begin{align*}
h_i^k=f\eta_i^k-\sum_{j\in F_1^{k+1}} f\eta_j^{k+1}\eta_i^k
-\sum_{j\in F_2^{k+1}} (f-P_j^{k+1})\eta_j^{k+1}\eta_i^k+\sum_{j\in F_2^{k+2}}P_{ij}^{k+1}\eta_j^{k+1}.
\end{align*}
{\it Case 2.} When $i\in E_1^k\cap F_2^k$, we rewrite $h_i^k$ into
\begin{align*}
h_i^k=(f-P_i^k)\eta_i^k-\sum_{j\in F_1^{k+1}} f\eta_j^{k+1}\eta_i^k
-\sum_{j\in F_2^{k+1}} (f-P_j^{k+1})\eta_j^{k+1}\eta_i^k+\sum_{j\in F_2^{k+2}}P_{ij}^{k+1}\eta_j^{k+1}.
\end{align*}
{\it Case 3.} When $i\in E_2^k$, we rewrite $h_i^k$ into
\begin{align*}
h_i^k=(f-P_i^k)\eta_i^k-\sum_{j\in F_2^{k+1}} (f-P_j^{k+1})\eta_j^{k+1}\eta_i^k+\sum_{j\in F_2^{k+2}}P_{ij}^{k+1}\eta_j^{k+1}.
\end{align*}
In this case, we know that $\sum_{j\in F_1^{k+1}} f\eta_j^{k+1}\eta_i^k=0$. Indeed, when $j\in F_1^{k+1},$
then $\ell_i^k<\frac{1}{2^4n}\ell_j^{k+1}$. Then by Lemma
\ref{s4l1} it follows that $Q_i^{k\ast}\cap Q_j^{(k+1)\ast}=
\varnothing.$ 

Consider Case 1 and Case 2.
$h_i^k$ is supported in $\tilde Q_i^k$ that contains
$Q_i^{k\ast}$ and all the $Q_j^{(k+1)\ast}$ that intersect
$Q_i^{k\ast}$. By Lemma \ref{s4l2}, if $Q_i^{k\ast}\cap Q_j^{(k+1)\ast}\neq \varnothing$, then we have
$\ell_j^{k+1}\le 2^4\sqrt{n}\ell_i^k$
and $Q_j^{(k+1)\ast}\subset 2^6nQ_i^{k\ast}
\subset\Omega_k.$
Fix $\gamma=1+2^{-12-n}$, if $\ell_i^k< 2/(\gamma-1)$,
we choose $\tilde Q_i^k=2^6nQ_i^{k\ast}$.
Otherwise, we can choose  $\tilde Q_i^k=\gamma Q_i^{k\ast}$. Observe that  $\ell_j^{k+1}<1$
for $j\in F_2^{k+1}$, then we get that $Q_j^{(k+1)\ast}\subset
Q(x_i^{k},a(\ell_i^k+2))$ for $j$ fulfilling $Q_i^{k\ast}\cap Q_j^{(k+1)\ast}\neq\varnothing$. 
Hence, $\mbox{supp}(h_i^k) \subset \tilde Q_i^k \subset\Omega_k$.

Consider Case 3. In this case, $i\in E_2^k$ and
$j\in F_2^{k+1}$, then by Lemma \ref{s4l2}, 
 $\mbox{supp}(h_i^k) \subset \tilde Q_i^k\subset \Omega_k$,
 where $\tilde Q_i^k=2^6nQ_i^\ast$.
 Furthermore, we also obtain that $h_i^k$ satisfies the
 moment conditions $\int_{\mathbb R^n}h_i^k(x)q(x)dx=0$
 for any $q\in\mathcal P_s$. Indeed, 
 from the constructions of $P_i^k$ and $P_{ij}^{k+1}$, 
 $(f-P_i^k)\eta_i^k$
 and $\sum_{j\in F_2^{k+1}} (f-P_j^{k+1})\eta_j^{k+1}\eta_i^k
 -\sum_{j\in F_2^{k+2}}P_{ij}^{k+1}\eta_j^{k+1}$ both satisfies
 the moment conditions.

Let $\lambda_{i,k}=C2^k$ and $a_{i,k}=\frac{h_i^k}{\lambda_{i,k}}$.
Then it follows that each $a_{i,k}$ satisfies
$\mbox{supp}\,a_{i,k}\subset \tilde Q_i^k$,
$\|a_{i,k}\|_{L^q}\le |\tilde Q_i^k|^{1/q}$ and 
the desired moment conditions for small cubes with
 \begin{align*}
 f=\sum_{i,k}\lambda_{i,k}a_{i,k}.
 \end{align*}
 
 For convenience, we only need to rearrange $\{a_{i,k}\}$
 and $\{\lambda_{i,k}\}$.
  \begin{align*}
 f=\sum_{i,k}\lambda_{i,k}a_{i,k}\equiv
 \sum_{j=1}^\infty\lambda_{j}a_{j}.
 \end{align*}
 
 To prove the theorem, it remains to show the estimates
 of coefficients, for any $s\in(0,\infty)$
\begin{equation*}
\left\|\left(\sum_{j=1}^\infty(\lambda_j\chi_{Q_j})^{s}\right)^{\frac{1}{s}}\right\|_{L^{p(\cdot)}}
\le C\|f\|_{h^{p(\cdot)}}.
\end{equation*}

To prove it, first observe that
\begin{align*}
&\left\|\left(\sum_{k\in\mathbb Z}\sum_{i\in\mathbb N}(\lambda_i^k\chi_{\tilde Q_i^k})^{s}\right)^{\frac{1}{s}}\right\|_{L^{p(\cdot)}}
\le C\left\|\left\{\sum_{k\in\mathbb Z}\left(
2^k\chi_{\Omega_k}\right)^{s}
\right\}^{\frac{1}{s}}
\right\|_{L^{p(\cdot)}}.
\end{align*}

Moreover, we have the fact that 
$\Omega_{k+1}\subset\Omega_k$ and
$|\bigcap_{k=1}^\infty\Omega_k|=0$,
then for $a.e~x\in\mathbb R^n$. Then it follows that
\begin{align*}
\sum_{k=-\infty}^{\infty}2^{k}\chi_{\Omega_k}(x)
&=\sum_{k=-\infty}^{\infty}2^k\sum_{j=k}^\infty\chi_{\Omega_j\backslash{\Omega_{j+1}}}(x)
=2\sum_{j=-\infty}^{\infty}2^j\chi_{\Omega_j\backslash{\Omega_{j+1}}}(x).
\end{align*}

Therefore, by the definition of $\Omega_j$ we have
\begin{align*}
&\left\|\left\{\sum_{k=-\infty}^\infty\left(
2^k\chi_{ \Omega_j}\right)^{s}
\right\}^{\frac{1}{s}}
\right\|_{L^{p(\cdot)}}\le
C\left\|\left\{\sum_{j=-\infty}^\infty\left(
2^j\chi_{ \Omega_i\setminus\Omega_{j+1}}\right)^{s}
\right\}^{\frac{1}{s}}
\right\|_{L^{p(\cdot)}}\\
&= C\inf\left\{\lambda>0:\int_{\mathbb R^n}\left(
\sum_{j=-\infty}^\infty\frac{2^j\chi_{ \Omega_j\setminus\Omega_{j+1}}}{\lambda}
\right)^{p(x)}dx\le 1\right\}\\
&= C\inf\left\{\lambda>0:\sum_{j=-\infty}^\infty\int_{{ \Omega_j\setminus\Omega_{j+1}}}
\left(\frac{2^j}{\lambda}
\right)^{p(x)}dx\le 1\right\}\\
&\le C\inf\left\{\lambda>0:\int_{\mathbb R^n}
\left(\frac{\mathcal G_{N}f(x)}{\lambda}
\right)^{p(x)}dx\le 1\right\}\leq C\|f\|_{{h}^{p(\cdot)}}.
\end{align*}
Finally, we prove that any $f\in h^{p(\cdot)}$ can be decomposed as in the theorem, since from Remark \ref{s3r2} we learn that
$ h^{p(\cdot)}\cap L^2$ is dense in  $h^{p(\cdot)}$.
Thus, we have completed the proof of Theorem \ref{s4th2}.
$\hfill\Box$

\subsection{Finite atomic decompositions}

The second goal of this section is to discuss that the atomic decomposition norm restricted to finite decompositions is eqivalent to the 
$h^{p(\cdot)}-$norm on some subspace.
For $(p^+\vee 1)<q<\infty$, the function space 
$h_{fin}^{p(\cdot),q}$ is the subspace of $h^{p(\cdot)}$
consisting of all $f$ that have decompositions as finite sums
of the special $(p(\cdot),q)-$atoms. By Corollary \ref{s4c1},
we know that $h_{fin}^{p(\cdot),q}$ is dense in $h^{p(\cdot)}$.

\begin{thm}\label{s4th3}
Let $p(\cdot)\in \mathcal P^0\cap LH$ and $(p^+\vee 1)
<q<\infty$. For $f\in h_{fin}^{p(\cdot),q}$, define
 \begin{align*}
 \|f\|_{h_{fin}^{p(\cdot),q}}\equiv
 \inf\left\{\left\|\sum_{j=1}^M\lambda_j\chi_{Q_j}\right\|_{L^{p(\cdot)}}:\;
f=\sum_{j=1}^M \lambda_ja_j \right\},
\end{align*}
and 
\begin{align*}
 \|f\|_{h_{fin,\ast}^{p(\cdot),q}}\equiv
 \inf\left\{\left\|\left(\sum_{j=1}^M\left(\lambda_j\chi_{Q_j}\right)^{p_-}\right)^{\frac{1}{p_-}}\right\|_{L^{p(\cdot)}}:\;
f=\sum_{j=1}^M \lambda_ja_j \right\},
\end{align*}
where $\{a_j\}_{j=1}^M$ are the special $(p(\cdot),q)-$atoms
and where the infimum is taken over all finite decomposition
of $f$.Then
\begin{align*}
\|f\|_{h^{p(\cdot)}}\sim \|f\|_{h_{fin}^{p(\cdot),q}}\sim \|f\|_{h_{fin,\ast}^{p(\cdot),q}}.
\end{align*}
\end{thm}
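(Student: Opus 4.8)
The plan is to prove the chain of equivalences $\|f\|_{h^{p(\cdot)}}\lesssim\|f\|_{h_{fin,\ast}^{p(\cdot),q}}\lesssim\|f\|_{h_{fin}^{p(\cdot),q}}\lesssim\|f\|_{h^{p(\cdot)}}$ for $f\in h_{fin}^{p(\cdot),q}$. The first inequality is immediate from Theorem \ref{s4th1} applied to a finite atomic decomposition (a finite sum is certainly a convergent series), together with the elementary pointwise bound $\sum_j\lambda_j\chi_{Q_j}\le\big(\sum_j(\lambda_j\chi_{Q_j})^{p_-}\big)^{1/p_-}$ when $p_-\le 1$ (and equality, up to the power, when $p_-=1$); monotonicity of $\|\cdot\|_{L^{p(\cdot)}}$ then gives $\|f\|_{h^{p(\cdot)}}\le C\|f\|_{h_{fin,\ast}^{p(\cdot),q}}$. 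The second inequality, $\|f\|_{h_{fin,\ast}^{p(\cdot),q}}\le C\|f\|_{h_{fin}^{p(\cdot),q}}$, follows from the same pointwise comparison in the reverse direction combined with Lemma \ref{s2l1}: writing $\sum_j(\lambda_j\chi_{Q_j})^{p_-}\le\sum_j\lambda_j^{p_-}(M\chi_{Q_j})^{p_-\tau}$ for suitable $\tau>1$ with $p_-\tau>1$ is not needed here since $\chi_{Q_j}=\chi_{Q_j}^{p_-}$; rather one simply notes $\big(\sum_j(\lambda_j\chi_{Q_j})^{p_-}\big)^{1/p_-}=\sum_j\lambda_j\chi_{Q_j}$ pointwise when the $Q_j$ are pairwise disjoint, and in general applies Lemma \ref{s2l6} or Lemma \ref{s2l1} exactly as in the proof of Theorem \ref{s4th1} to pass between the two quasi-norms with a controlled constant.

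The substance of the theorem is the remaining inequality $\|f\|_{h_{fin}^{p(\cdot),q}}\le C\|f\|_{h^{p(\cdot)}}$, i.e., that any $f$ which is a \emph{finite} sum of $(p(\cdot),q)$-atoms admits a finite atomic decomposition whose coefficient quasi-norm is controlled by $\|f\|_{h^{p(\cdot)}}$, rather than merely by the (possibly much larger) quasi-norm of the given finite decomposition. The approach is the classical one of Meda--Musielak--Sickel (and as adapted to the variable setting by the author and by Yang--Yang): revisit the Calder\'on--Zygmund decomposition from the proof of Theorem \ref{s4th2}. Since $f\in h_{fin}^{p(\cdot),q}\subset h^{p(\cdot)}\cap L^q$ has compact support (being a finite sum of atoms, each compactly supported) and since $\mathcal G_N(f)$ decays like $|x|^{-(n+d+1)}$ at infinity, the level sets $\Omega_k=\{\mathcal G_N(f)>2^k\}$ are empty for $k$ large, say $k>k_1$, and for $k$ very negative $\Omega_k\supset\operatorname{supp} f$ but has finite measure. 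One then runs the decomposition $f=\sum_{k\le k_1}\sum_i h_i^k$ from Theorem \ref{s4th2}; the tail $\sum_{k<k_0}g^{k}$ part, i.e. the ``big'' pieces coming from very negative $k$, must be shown to form a \emph{single} atom (after suitable normalization) rather than infinitely many — this uses that $\|g^{k_0}\|_{L^q}\to\|f\|_{L^q}$ and that $g^{k_0}$ is supported in a fixed large ball, so $g^{k_0}/(C\|f\|_{h^{p(\cdot)}}\cdot(\text{size factor}))$ is a $(p(\cdot),q)$-atom. The key quantitative point is that the number of indices $(i,k)$ with $k\le k_1$ and $\Omega_k\neq\varnothing$ is finite, so the decomposition $f=\sum_{k_0<k\le k_1}\sum_i h_i^k + (\text{one atom from }g^{k_0})$ is genuinely finite; then the coefficient estimate is exactly the one already established in the proof of Theorem \ref{s4th2}, namely $\big\|\big(\sum(\lambda_i^k\chi_{\tilde Q_i^k})^{p_-}\big)^{1/p_-}\big\|_{L^{p(\cdot)}}\le C\|f\|_{h^{p(\cdot)}}$.

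One subtlety deserves care: the atoms $h_i^k$ produced by the Calder\'on--Zygmund machinery of Lemma \ref{s4l1} have $L^\infty$ bounds ($\|h_i^k\|_{L^\infty}\le C2^k$) and the right support and moment conditions, so they are $(p(\cdot),\infty)$-atoms up to normalization, hence in particular $(p(\cdot),q)$-atoms for the finite $q$ in the statement; the ``one big atom'' from $g^{k_0}$ is only an $L^q$-atom, which is why the theorem is stated for finite $q$ rather than $q=\infty$. To make the reduction from general $f\in h_{fin}^{p(\cdot),q}$ to $f\in h_{fin}^{p(\cdot),q}\cap L^2$ (needed to legitimately invoke the pointwise-a.e.\ convergence in Theorem \ref{s4th2}) one notes that every $(p(\cdot),q)$-atom with $q>1$ is in $L^2_{loc}$ and compactly supported, hence in $L^2$; so actually $h_{fin}^{p(\cdot),q}\subset L^2$ automatically and no density argument is required at this stage. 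The main obstacle, and the step requiring the most care, is verifying that the contribution of the lowest-level pieces genuinely collapses to finitely many atoms with the correct $L^q$ normalization — in other words, controlling $\|g^{k_0}\|_{L^q}$ and the diameter of its support in terms of $k_0$ and $\|f\|_{h^{p(\cdot)}}$, and choosing $k_0$ so that the resulting normalization constant is absorbed into $\|f\|_{h^{p(\cdot)}}$ via Lemma \ref{s2l7}; this is where the boundedness of $M$ on $L^q$ (used to bound $\|g^{k_0}\|_{L^q}\lesssim\|f\|_{L^q}$) and the relation between $\|\chi_{B}\|_{L^{p(\cdot)}}$ and $|B|^{1/p_\infty}$ for large balls $B$ enter decisively.
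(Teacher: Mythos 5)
Your reduction to the single hard inequality $\|f\|_{h_{fin}^{p(\cdot),q}}\le C\|f\|_{h^{p(\cdot)}}$ and the idea of revisiting the Calder\'on--Zygmund decomposition from Theorem \ref{s4th2} for compactly supported $f$ are on the right track, and the paper's proof does start exactly this way (it observes that for local $\mathcal G_N$ and compactly supported $f$ the set $\Omega_k$ is contained in a fixed cube $\hat Q_0$). However there are two genuine gaps in the way you try to make the decomposition \emph{finite}, and they are precisely where this theorem differs from the $q=\infty$ case treated by Meda--Sj\"ogren--Vallarino and Yang--Yang.

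First, you assert that $\Omega_k=\{\mathcal G_N f>2^k\}$ is empty for $k$ large, citing decay of $\mathcal G_N f$. For local $\mathcal G_N$ and compactly supported $f$ the maximal function in fact vanishes outside a fixed cube, but ``vanishes at infinity'' is not ``bounded.'' For the finite-$q$ case covered by this theorem, $f$ is only in $L^q$ with compact support, so $\mathcal G_N f$ can be unbounded and $\Omega_k\neq\varnothing$ for \emph{every} $k$. The upper truncation in $k$ that your argument relies on is therefore unavailable. Second, even for a single level $k$ with $\Omega_k$ bounded, the Whitney cubes $\{Q_i^k\}_i$ of $\Omega_k$ are generically \emph{infinite} in number (they shrink toward $\partial\Omega_k$), so ``$\sum_{k_0<k\le k_1}\sum_i h_i^k$'' is still an infinite sum. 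Your assertion that ``the number of indices $(i,k)$ with $k\le k_1$ and $\Omega_k\neq\varnothing$ is finite'' is the crux of the missing step and is not true.

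The paper resolves both problems at once: it truncates the \emph{doubly}-indexed series to the finite set $F_N=\{(i,k):|k|+i\le N\}$, proves the full series converges to $f$ in $L^q$ (via the pointwise bound by $\mathcal G_N f$ and dominated convergence), hence the remainder $f-f_N$ has arbitrarily small $L^q$-norm and is supported in the fixed cube $\hat Q_0$; it then tiles $\hat Q_0$ by finitely many cubes $Q_i$ of sidelength in $[1,2)$ and packages $(f-f_N)\chi_{Q_i}$, suitably normalized, into \emph{large} local atoms (no moment condition needed because the sidelength is $\ge 1$). The coefficient norm of these finitely many large atoms is controlled by $\epsilon$ and absorbed. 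Your ``single atom from $g^{k_0}$'' covers only the very negative $k$'s, and you would still be left with infinitely many pieces; the tiling-and-renormalization of the $L^q$-small tail is the ingredient your proposal is missing.
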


\begin{proof}
We only need to prove that 
\begin{align*}
\|f\|_{h^{p(\cdot)}}\sim \|f\|_{h_{fin}^{p(\cdot),q}}.
\end{align*}
First, it is obviously that
\begin{align*}
\|f\|_{h^{p(\cdot)}}\le C\|f\|_{h_{fin}^{p(\cdot),q}}
\end{align*} 
for $f\in h_{fin}^{p(\cdot),q}$. Next we only need to
prove that for any $f\in h_{fin}^{p(\cdot),q}$,
\begin{align*}
\|f\|_{h_{fin}^{p(\cdot),q}}\le C\|f\|_{h^{p(\cdot)}}.
\end{align*} 
By homogeneity we can assume that 
$\|f\|_{h^{p(\cdot)}}=1$. To prove this theorem,
it suffices to show that $\|f\|_{h_{fin}^{p(\cdot),q}}\le C$.
By Theorem \ref{s4th2}, since $f\in h^{p(\cdot)}\cap L^q$,
we can form the following
decomposition of $f$ in terms of  the special local $(p(\cdot),q)-$atoms:
 \begin{align*}
 f=\sum_{i,k}\lambda_{i,k}a_{i,k},
 \end{align*}
where the series converges in $\mathcal D'$ and 
almost everywhere.
If $f\in h_{fin}^{p(\cdot),q}$, then 
$\mbox{supp}\,f\subset Q(x_0, R_0)$ for fixed $x_0\in\mathbb R^n$ and some $R_0\in(1,\infty)$.
We set $\hat Q_0=Q(x_0, \sqrt{n}R_0+2^{3(n+10)+1})$.
Then for any $\psi\in\mathcal D_N$ and $x\in (\hat Q_0)^c$,
for $0<t<1$, we get that
$$
\psi_t\ast f(x)=\int_{Q(x_0,R_0)}\psi_t(x-y)f(y)dy=0.
$$
Then, for any $x\in (\hat Q_0)^c$, it follows that
$x\in (\Omega_k)^c$. Hence, $\Omega_k\subset \hat Q_0$,
and $\mbox{supp} \sum_{i,k}\lambda_{i,k}a_{i,k}\subset \hat Q_0$.
Now we claim that $\sum_{i,k}\lambda_{i,k}a_{i,k}$
 converges to $f$ in $L^q$. In fact,
for any $x\in \mathbb R^n$, we can find a $j\in\mathbb Z$
such that $x\in\Omega_j\setminus\Omega_{j+1}$. 
Since $\mbox{supp}\,a_{i,k}\subset Q_i^{k\ast}\subset \Omega_k\subset\Omega_{j+1}$ for all $k>j$.
Then we have
$$
\left|\sum_{k\in\mathbb Z}\sum_{i\in\mathbb N}
\lambda_{i,k}a_{i,k}\right|\le
\sum_{k\in\mathbb Z}\sum_{i\in\mathbb N}
|\lambda_{i,k}a_{i,k}| \le \sum_{k\le j}2^k\le C2^j
\le C\mathcal G_Nf(x).
$$
By the fact that $\mathcal G_Nf\in L^q$ and the Lebesgue
dominated convergence theorem, we have proved the claim.
For each integer $N>0$, we write $$F_N=\{(i,k):k\in\mathbb Z,i\in\mathbb N,|k|+i\le N\}.$$
Then $f_N\equiv\sum_{(i,k)\in F_N}\lambda_{i,k}a_{i,k}$
is a finite combination of the special local $(p(\cdot),q)-$atoms with
$$\|f_N\|_{h_{fin}^{p(\cdot),q}}=\Bigg\|\sum_{(i,k)\in F_N}\lambda_{i,k}\chi_{Q_{i,k}}\Bigg\|_{L^{p(\cdot)}}
\le C\|f\|_{h^{p(\cdot)}}\le C.$$
Since the series $\sum_{i,k}\lambda_{i,k}a_{i,k}$
converges absolutely in $L^q$, for any given $\epsilon\in(0,\infty)$ there exists $N$ such 
that $\|f-f_N\|_{L^q}<\frac{\epsilon|\hat Q_0|^{1/q}}{\|\chi_{\hat Q_0}\|_{L^{p(\cdot)}}}$. 
Meanwhile, $\mbox{supp}\,f_N\subset \hat Q_0$ together
with the support of $f$ implies that 
$\mbox{supp}\,(f-f_N)\subset \hat Q_0$.
So we can divide $\hat Q_0$ into the union of cubes 
$\{Q_i\}_{i=1}^{N_0}$ with disjoint interior and sidelengths satisfying $\ell_i\in [1,2)$, where $N_0$ depends only on $R_0$ and $n$.
Particularly, we know that
$g_{N,i}\equiv\frac{\|\chi_{ Q_i}\|_{L^{p(\cdot)}}}{\epsilon}(f-f_N)\chi_{Q_i}$ is a special local $(p(\cdot),q)-$atom.
Moreover,
\begin{align*}
\left\|f-f_N\right\|^{p^-}_{h_{fin}^{p(\cdot),q}}&=\left\|\sum_{i=1}^{N_0}\frac{\epsilon}{\|\chi_{Q_i\|_{L^{p(\cdot)}}}}g_{N,i}\right\|^{p^-}_{h_{fin}^{p(\cdot),q}}\\
&=\Bigg\|\sum_{i=1}^{N_0}\frac{\epsilon\chi_{Q_i}}{\|\chi_{Q_i}\|_{L^{p(\cdot)}}}
\Bigg\|^{p^-}_{L^{p(\cdot)}}\\
&\le C\sum_{i=1}^{N_0}\Bigg\|\frac{\epsilon\chi_{Q_i}}{\|\chi_{Q_i}\|_{L^{p(\cdot)}}}
\Bigg\|^{p^-}_{L^{p(\cdot)}}\le C.
\end{align*}
Therefore, we conclude that
$$
f=\sum_{(i,k)\in F_N}\lambda_{i,k}a_{i,k}
+\sum_{i=1}^{N_0}\frac{\epsilon}{\|\chi_{Q_i\|_{L^{p(\cdot)}}}}g_{N,i}
$$
is the desired finite atomic decomposition.
This finished the proof of Theorem \ref{s4th3}.
\end{proof}

\section{Dual spaces of $h^{p(\cdot)}$}
This section is devoted to giving a complete dual theory of $h^{p(\cdot)}$ for $0<p^-\le p^+<\infty$. 
First we consider the case when $0<p^-\le p^+\le 1$
by introducing the local variable Campanato space $bmo^{p(\cdot)}$. Then we establish the local variable Campanato type space $\widetilde{bmo}^{p(\cdot)}$ when we deal with duality of $h^{(\cdot)}$ with $p^+>1$ and $p^-\le1$. 

\subsection{Duality of $h^{p(\cdot)}$ with $p^+\le1$}

In this subsection, we first introduce the local variable Campanato space $bmo^{p(\cdot)}$ and show that the dual space
of $h^{p(\cdot)}$ is $bmo^{p(\cdot)}$. Furthermore,
we also give some equivalent characterizations of the dual of local variable Hardy spaces. We begin with some notions and definitions.

Let $g\in L_{loc}$, and $Q$ be the cube in $\mathbb R^n$.
Let us point out the fact that there exists a unique 
$P\in\mathcal P_d$ with the degree not greater than
$d$, such that 
\begin{align*}
\int_Q(g(x)-P(x))Q(x)dx=0,
\end{align*}
for any $Q\in P_d$. Denote this unique $P$ by
$P_Qg$.

\begin{defn}\label{s5d1}
Suppose that $p(\cdot)\in LH$, $0<p^-\le p^+\leq 1< q< \infty$. 
Let \begin{align*}
&\|f\|_{bmo^{p(\cdot)}}\equiv
\sup_{|Q|<1}
\frac{|Q|}{\|\chi_Q\|_{L^p(\cdot)}}
\left(\frac{1}{|Q|}\int_{Q}|f(x)-P_Qf(x)|^{q'}dx\right)^{1/{q'}}
\\&+
\sup_{|Q|\ge1}
\frac{|Q|}{\|\chi_Q\|_{L^p(\cdot)}}
\left(\frac{1}{|Q|}\int_{Q}|f(x)|^{q'}dx\right)^{1/{q'}},
\end{align*}
where the supreme are taken over all the cubes $Q\subset \mathbb R^n$.
Then the function spaces
$$
bmo^{p(\cdot)}(\mathbb R^n)=\{f\in L_{loc}:
\|f\|_{bmo^{p(\cdot)}}<\infty\}
$$
are called the local variable Campanato spaces.
\end{defn}

We also introduce the local variable Lipschitz spaces 
as follows.
\begin{defn}\label{s5d2}
Suppose that $p(\cdot)\in LH$, $0<p^-\le p^+\leq 1< q< \infty$. 
Let \begin{align*}
\|f\|_{lip_{p(\cdot)}}\equiv
\sup_{|Q|<1}\sup_{x,y\in Q}
\frac{|Q||f(x)-f(y)|}{\|\chi_Q\|_{L^p(\cdot)}}+
\sup_{|Q|\ge1}
\frac{|Q|\|f\|_{L^\infty(Q)}}{\|\chi_Q\|_{L^p(\cdot)}},
\end{align*}
where the first supreme are taken over all cubes 
$Q\subset \mathbb R^n$ with $|Q|<1$ and
the second supreme are taken over all cubes 
$Q\subset \mathbb R^n$ with $|Q|\ge1$.
Then the local variable Lipschitz spaces is defined by setting
$$
lip_{p(\cdot)}(\mathbb R^n)=\{f\in L_{loc}:
\|f\|_{bmo^{p(\cdot)}}<\infty\}.
$$
\end{defn}

\begin{thm}\label{s5th1}
Suppose that $p(\cdot)\in LH$, $0<p^-\le p^+\le1<
q<\infty$. 
The dual space of $h^{p(\cdot)}$
is $bmo^{p(\cdot)}$ in the following sense:\\
\noindent (1) For any $g\in bmo^{p(\cdot)}$, the linear functional $l_g$,
defined initially on $h_{fin}^{p(\cdot),q}$ has a unique extension to $h^{p(\cdot)}$ with $\|l_g\|\le C\|g\|_{bmo^{p(\cdot)}}$.\\
\noindent (2) Conversely, for any $l\in(h^{p(\cdot)})'$, 
there exists a unique function $g\in bmo^{p(\cdot)}$ 
such that 
$l_g(f)=\big<g,f\big>$ holds ture for all $f\in h_{fin}^{p(\cdot),q}$ with
$\|g\|_{bmo^{p(\cdot)}}\le C\|l\|$.
\end{thm}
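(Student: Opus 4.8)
The plan is to prove the two directions of the duality separately, following the classical Goldberg--Fefferman--Stein scheme adapted to the variable-exponent setting, using the finite atomic decomposition of Theorem~\ref{s4th3} as the bridge between $h^{p(\cdot)}$ and its dual.

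For part (1), given $g\in bmo^{p(\cdot)}$ I would first define $l_g$ on $h_{fin}^{p(\cdot),q}$ by $l_g(f)=\int_{\mathbb R^n}g(x)f(x)\,dx$, noting this pairing is well defined because $f\in L^q$ has compact support while $g\in L^{q'}_{loc}$. The key estimate is that for any special local $(p(\cdot),q)$-atom $a$ supported in a cube $Q$,
\begin{align*}
|l_g(a)|\le C\frac{\|\chi_Q\|_{L^{p(\cdot)}}}{|Q|},
\end{align*}
which follows by splitting into the cases $|Q|<1$ and $|Q|\ge1$: when $|Q|<1$ one uses the moment condition $\int a\,x^\alpha=0$ for $|\alpha|\le d$ to replace $g$ by $g-P_Qg$, then applies H\"older's inequality together with condition~(ii) of Definition~\ref{s4d1} and the definition of $\|g\|_{bmo^{p(\cdot)}}$; when $|Q|\ge1$ one argues the same way without the polynomial subtraction. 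Then, given a finite decomposition $f=\sum_j\lambda_ja_j$, I sum these estimates; the passage from $\sum_j\lambda_j\|\chi_{Q_j}\|_{L^{p(\cdot)}}/|Q_j|$ to $\|\sum_j\lambda_j\chi_{Q_j}\|_{L^{p(\cdot)}}$ is exactly the kind of step handled by H\"older's inequality in variable Lebesgue spaces (Lemma~\ref{s2l3}) applied with the dual exponent, combined with Lemma~\ref{s2l7} to control $\|\chi_{Q_j}\|_{L^{p(\cdot)}}$; taking the infimum over finite decompositions and using Theorem~\ref{s4th3} gives $|l_g(f)|\le C\|g\|_{bmo^{p(\cdot)}}\|f\|_{h^{p(\cdot)}}$. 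Since $h_{fin}^{p(\cdot),q}$ is dense in $h^{p(\cdot)}$ (Corollary~\ref{s4c1}), $l_g$ extends uniquely.

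For part (2), given $l\in(h^{p(\cdot)})'$, I restrict $l$ to the atoms supported in a fixed large cube $Q_0$. For such $Q_0$, any $h\in L^q(Q_0)$ with $\int h\,x^\alpha=0$ ($|\alpha|\le d$, if $|Q_0|<1$) is, up to normalization by $\|h\|_{L^q}|Q_0|^{-1/q}\|\chi_{Q_0}\|_{L^{p(\cdot)}}|Q_0|^{-1}$ (roughly), a multiple of a special local atom, so $|l(h)|\lesssim \|l\|\,\|\chi_{Q_0}\|_{L^{p(\cdot)}}|Q_0|^{-1/q'}\|h\|_{L^q}$; thus $l$ extends to a bounded functional on the relevant subspace of $L^q(Q_0)$, and by Hahn--Banach and the Riesz representation theorem for $L^q$ there is $g_{Q_0}\in L^{q'}(Q_0)$ (unique modulo $\mathcal P_d$ on $Q_0$ when $|Q_0|<1$) representing it. A standard consistency/exhaustion argument over an increasing sequence of cubes produces a single locally integrable $g$, uniquely determined modulo the appropriate polynomial normalization, with $l_g(f)=\langle g,f\rangle$ on $h_{fin}^{p(\cdot),q}$. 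Finally one checks $g\in bmo^{p(\cdot)}$ by, for each cube $Q$, testing against the extremizing function: choose $h=|g-P_Qg|^{q'-1}\operatorname{sgn}(g-P_Qg)\chi_Q$ (minus its own $\mathcal P_d$-projection on $Q$ when $|Q|<1$), normalize it to an atom, and read off that its $bmo^{p(\cdot)}$-seminorm quantity is $\le C\|l\|$; the $|Q|\ge1$ supremum is handled similarly with $h=|g|^{q'-1}\operatorname{sgn}(g)\chi_Q$.

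The main obstacle I anticipate is the bookkeeping in part (2): ensuring the locally defined representing functions patch together into a single $g$ with the correct \emph{global} polynomial normalization (the cancellation is only required on small cubes, which interacts delicately with the projections $P_Q$), and verifying that the test function used to recover the $bmo^{p(\cdot)}$-norm genuinely satisfies the atom size condition~(ii) uniformly --- this is where the variable-exponent estimates on $\|\chi_Q\|_{L^{p(\cdot)}}$ from Lemma~\ref{s2l7}, distinguishing $|Q|<1$ from $|Q|\ge1$, must be invoked carefully. A secondary technical point is that in part (1) the summation step requires $q>1\vee p^+$ so that the dual exponent $(p(\cdot)/p_-)'$ or its analogue behaves well; this is guaranteed by the hypothesis $1<q<\infty$ together with $p^+\le1$, so it causes no real difficulty here but must be stated.
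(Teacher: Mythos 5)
Your overall scheme is the same as the paper's: in part (1) estimate the pairing atom by atom, then sum and invoke the finite atomic norm (Theorem~\ref{s4th3}); in part (2) restrict to $L^q$ on an exhausting sequence of cubes, apply Riesz representation, patch, and test against extremizers. However, there is a genuine gap in the summation step of part (1), and a scaling error in the single-atom estimate that would propagate through it.

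First, the atom estimate is off: H\"older with $\|a\|_{L^q}\le|Q|^{1/q}$ and the definition of $\|g\|_{bmo^{p(\cdot)}}$ give
\begin{equation*}
|l_g(a)|\le \|\chi_Q\|_{L^{p(\cdot)}}\,\|g\|_{bmo^{p(\cdot)}},
\end{equation*}
not $C\|\chi_Q\|_{L^{p(\cdot)}}/|Q|$ (you also dropped the $\|g\|_{bmo^{p(\cdot)}}$ factor). So after summing a finite decomposition $f=\sum_j\lambda_j a_j$, what you must control is $\sum_j\lambda_j\|\chi_{Q_j}\|_{L^{p(\cdot)}}$, and you must bound it by $\big\|\big(\sum_j(\lambda_j\chi_{Q_j})^{p_-}\big)^{1/p_-}\big\|_{L^{p(\cdot)}}$, which is $\|f\|_{h_{fin,*}^{p(\cdot),q}}$. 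This is where your proposal breaks down: this inequality is \emph{not} an instance of the generalized H\"older inequality (Lemma~\ref{s2l3}) combined with Lemma~\ref{s2l7} --- those lemmas give no way to distribute the norm over a (possibly badly overlapping) family of cubes. What is actually needed is a modular computation that exploits $p^+\le1$ in an essential way: setting $\kappa=\sum_j\lambda_j\|\chi_{Q_j}\|_{L^{p(\cdot)}}$, one shows
\begin{equation*}
\int_{\mathbb R^n}\Big(\sum_j\Big(\frac{\lambda_j\chi_{Q_j}(x)}{\kappa}\Big)^{p_-}\Big)^{p(x)/p_-}dx
\ge \sum_j\int_{\mathbb R^n}\Big(\frac{\lambda_j\chi_{Q_j}(x)}{\kappa}\Big)^{p(x)}dx
\ge \sum_j\frac{\lambda_j\|\chi_{Q_j}\|_{L^{p(\cdot)}}}{\kappa}=1,
\end{equation*}
where the first step uses $p(x)/p_-\ge1$ and the second uses $\lambda_j\|\chi_{Q_j}\|_{L^{p(\cdot)}}/\kappa\le1$ together with $p(x)\le1$ to lower-bound a $p(x)$-th power by the first power. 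Without this observation --- which is the one step that genuinely requires the hypothesis $p^+\le1$ --- part (1) does not close, and a H\"older-type substitute will not produce it.

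Part (2) is structurally correct, but you use, without stating or proving it, the key pointwise fact that every special local $(p(\cdot),q)$-atom $a$ supported in $Q$ satisfies $\|a\|_{h^{p(\cdot)}}\le C\|\chi_Q\|_{L^{p(\cdot)}}$; this is what gives your estimate $|l(h)|\lesssim\|l\|\,\|\chi_{Q_0}\|_{L^{p(\cdot)}}|Q_0|^{-1/q}\|h\|_{L^q}$ (note: the exponent on $|Q_0|$ should be $-1/q$, not $-1/q'$). The paper devotes roughly half of the proof of (2) to establishing this claim via maximal-function estimates in the two regimes $|Q|\le1$ and $|Q|>1$; you should flag it rather than fold it into a ``roughly'' parenthetical. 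The patching and extremizer-testing steps you sketch match the paper and are fine once that claim is in place.
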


\begin{proof}
Let $g\in bmo^{p(\cdot)}$ and $f\in h_{fin}^{p(\cdot),q}$.
Then for some numbers $\{\lambda_j\}_{j=1}^M$
and the special local $(p(\cdot),q)-$atoms, we have
\begin{align*}
f=\sum_{j=1}^M \lambda_ja_j
\end{align*}
in $\mathcal D'$.
When $|Q_j|<1$, by the cancellation of $a_j$
and H\"older's inequality, we get that
\begin{align*}
\left|\int_{\mathbb R^n}f(x)g(x)dx\right|
&=\left|\sum_{j=1}^M\lambda_j\int_{\mathbb R^n}a_j(x)g(x)\right|\\
&\le\sum_{j=1}^M|\lambda_j|\left|\int_{\mathbb R^n}a_j(x)
g(x)\chi_{Q_j}dx\right|\\
&\le\sum_{j=1}^M|\lambda_j|\left|\int_{\mathbb R^n}a_j(x)
(g(x)-P_{Q_j}g(x))\chi_{Q_j}dx\right|\\
&\le\sum_{j=1}^M|\lambda_j|\|a_j\|_{L^q}\left(\int_{\mathbb R^n}|g(x)-P_{Q_j}g(x)|^{q'}\chi_{Q_j}dx\right)^{q'}\\
&=\sum_{j=1}^M|\lambda_j|\|\chi_{Q_j}\|_{L^{p(\cdot)}}\frac{|Q_j|}{\|\chi_{Q_j}\|_{L^{p(\cdot)}}}
\left(\frac{1}{|Q_j|}\int_{Q_j}|g(x)-P_{Q_j}g(x)|^{q'}dx\right)^{q'}\\
&\le \left(\sum_{j=1}^M|\lambda_j|\|\chi_{Q_j}\|_{L^{p(\cdot)}}\right)\|g\|_{bmo^{p(\cdot)}}.
\end{align*}
We claim that 
\begin{align*}
\sum_{j=1}^M|\lambda_j|\|\chi_{Q_j}\|_{L^{p(\cdot)}}\le
C\|f\|_{h_{fin,\ast}^{p(\cdot),q}}.
\end{align*}
Write $\kappa=\sum_{j}^M|\lambda_j|\|\chi_{Q_j}\|_{L^{p(\cdot)}}$. Indeed, since $p^+\le 1$, then we obtain that
\begin{align*}
&\int_{\mathbb R^n}\left(\sum_{i=1}^M\left(\frac{\lambda_i\chi_{Q_i}(x)}{\kappa}\right)^{p_-}\right)^{\frac{p(x)}{p_-}}dx\\
&\ge \int_{\mathbb R^n}\sum_{i=1}^M\left(\frac{\lambda_i\chi_{Q_i}(x)}{\kappa}\right)^{p(x)}dx\\
&=\sum_{i=1}^M\int_{\mathbb R^n}\left(\frac{\lambda_i\chi_{Q_i}(x)}{\kappa}\right)^{p(x)}dx\\
&\ge\sum_{i=1}^M\frac{\lambda_i\|\chi_{Q_i}\|_{L^{p(\cdot)}}}{\kappa}\int_{\mathbb R^n}\left(\frac{\chi_{Q_i}(x)}{\|\chi_{Q_i}\|_{L^{p(\cdot)}}}\right)^{p(x)}dx\\
&=\sum_{i=1}^M\frac{\lambda_i\|\chi_{Q_i}\|_{L^{p(\cdot)}}}{\kappa}=1.
\end{align*}

Thus, if $|Q_j|<1$, then we get that
\begin{align*}
\left|\int_{\mathbb R^n}f(x)g(x)dx\right|
&\le\|f\|_{h_{fin,\ast}^{p(\cdot),q}}\|g\|_{bmo^{p(\cdot)}}.
\end{align*}

Similarly, if $|Q_j|\ge 1$, then we obtain that
\begin{align*}
\left|\int_{\mathbb R^n}f(x)g(x)dx\right|
&=\left|\sum_{j=1}^M\lambda_j\int_{\mathbb R^n}a_j(x)g(x)\right|\\
&\le\sum_{j=1}^M|\lambda_j|\left|\int_{\mathbb R^n}a_j(x)
g(x)\chi_{Q_j}dx\right|\\
&\le\sum_{j=1}^M|\lambda_j|\|a_j\|_{L^q}\left(\int_{{Q_j}}|g(x)|^{q'}dx\right)^{q'}\\
&=\sum_{j=1}^M|\lambda_j|\|\chi_{Q_j}\|_{L^{p(\cdot)}}
\frac{|Q_j|}{\|\chi_{Q_j}\|_{L^{p(\cdot)}}}
\left(\frac{1}{|Q_j|}\int_{Q_j}|g(x)|^{q'}dx\right)^{q'}\\
&\le \|f\|_{h_{fin,\ast}^{p(\cdot),q}}\|g\|_{bmo^{p(\cdot)}}.
\end{align*}
By Theorem \ref{s4th3}, we conclude that
\begin{align*}
l_g(f):=\big<g,f\big>
\le \|f\|_{h^{p(\cdot)}}\|g\|_{bmo^{p(\cdot)}}.
\end{align*}
This show that $l_g$ can be extended uniquely to 
a bounded linear functional $h^{p(\cdot)}$ with
$$\|l_g\|\le C\|g\|_{bmo^{p(\cdot)}}.$$
Thus, the former conclusion of the theorem holds.

Now we prove $(2)$.
To prove it, we first claim that $\|a\|_{h^{p(\cdot)}}\le C\|\chi_Q\|_{L^{p(\cdot)}}$ for every special local $(p(\cdot),q)-$atom supported on a cube $Q$.
Indeed, let $
\varphi\in\mathcal D$ 
be a nonnegative and radial function supported on
$Q(0,1/2)$ with $\int \varphi(x)dx\neq 0$.
Applying Lemma \ref{s2l7}, the boundedness of the maximal operator
$M$ and the fact that $\mathcal M_\varphi a(x)\le 
CMa(x)$ yield that
\begin{align*}
\|(\mathcal M_\varphi a)\chi_{\tilde Q}\|_{L^{p(\cdot)}}&\le \|(Ma)\chi_{\tilde Q}\|_{L^{p(\cdot)}}\\
&\le\|Ma\|_{L^q}\|\chi_{\tilde Q}\|_{L^{\tilde q(\cdot)}}\\
&\le\|a\|_{L^q}\|\chi_{\tilde Q}\|_{L^{\tilde q(\cdot)}}\\
&\le|Q|^{\frac{1}{q}}\|\chi_{\tilde Q}\|_{L^{\tilde q(\cdot)}}\le \|\chi_Q\|_{L^{p(\cdot)}},
\end{align*}
where  $\tilde q(\cdot)$ is defined by
$\frac{1}{p(x)}=\frac{1}{q}+\frac{1}{\tilde q(x)}.$
Next we need to show that 
$\|(\mathcal M_\varphi a)\chi_{(\tilde Q)^c}\|_{L^{p(\cdot)}}\le \|(Ma)\chi_{\tilde Q}\|_{L^{p(\cdot)}}\le \|\chi_Q\|_{L^{p(\cdot)}}$.
When $|Q|\le1$, by following the standard argument
in \cite[p. 3682]{NS}, we obtain that
\begin{align*}
  |(a\ast \varphi_j)(x)| \le
C\frac{\ell(Q)^{n+d+1}}{|x-c_Q|^{n+d+1}},
\end{align*}
where $x\in (\tilde Q)^c$ and $c_Q$ is the center
of the cube $Q$.
Hence, we get that 
\begin{align*}
\|(\mathcal M_\varphi a)\chi_{(\tilde Q)^c}\|_{L^{p(\cdot)}}
&\le C\left\|
\frac{\ell(Q)^{n+d+1}}{|\cdot-c_Q|^{n+d+1}}
\right\|_{L^{p(\cdot)}}\\
&\le C\left\|
(M\chi_{\tilde Q})^{n+d+1/n}
\right\|_{L^{p(\cdot)}}\le C\|\chi_Q\|_{L^{p(\cdot)}}.
\end{align*}
When $|Q|>1$, observe that $j\ge0$,
for any special local $(p(\cdot),q)-$atom 
$a$ with $\mbox{supp}\;a\subset Q$ and $x\in (\tilde Q)^c$, we have
\begin{align*}
  |(a\ast \varphi_j)(x)| &\le
  \int_{Q}|a(y)\varphi_j(x-y)dy \\
  &\le \sup_{y\in Q}|\varphi_j(x-y)\int_{Q}|a(y)|dy\\
  &\le C\frac{2^{jn}}{(1+2^j|x-c_Q|)^M}\|a\|_{L^q}|Q|^{\frac{1}{q'}}\\
  &\le C\frac{2^{jn}}{(1+2^j|x-c_Q|)^M}\\
  &\le C\frac{2^{j(n-M)}(l(Q))^M}{|x-c_Q|^M}
  \le C\frac{(l(Q))^M}{|x-c_Q|^M},
\end{align*}
for any sufficient large $M>n>0$.
We choose $M$ such that $\frac{Mp^-}{n}>1$.
Similarly, we can obtain that 
\begin{align*}
\|(\mathcal M_\varphi a)\chi_{(\tilde Q)^c}\|_{L^{p(\cdot)}}\le C\left\|
(M\chi_{\tilde Q})^{M/n}
\right\|_{L^{p(\cdot)}}\le C\|\chi_Q\|_{L^{p(\cdot)}}.
\end{align*}
Therefore, we have proved the claim.
Fix a cube $Q$ with $\ell(Q)\ge 1$.
For any given $f\in L^q(Q)$ with $\|f\|_{L^q(Q)}>0$,
set
\begin{align*}
a(x)\equiv \frac{f(x)\chi_{Q}|Q|^{1/q}}{\|f\|_{L^q(Q)}}.
\end{align*}
Then $a$ is obviously a special local $(p(\cdot),q)-$atom. Thus, $|l(a)|\le \|l\|\|a\|_{h^{p(\cdot)}}
\le \|l\|\|\chi_Q\|_{L^{p(\cdot)}}.$
It follows that for any $l\in(h^{p(\cdot)})'$,
\begin{align*}
|l(f)|\le
\|l\|\|f\|_{h^{p(\cdot)}}\le
\|l\|\|f\|_{L^q(Q)}\|\chi_Q\|_{L^{p(\cdot)}}|Q|^{-1/q}.
\end{align*}
Hence, $l\in(L^q(Q))'$ and $(h^{p(\cdot)})'\subset
(L^q(Q))'$.
Since $1<q<\infty$, using the duality $L^q(Q)-L^{q'}(Q)$, we find that there exists a  $g^Q\in L^{q'}(Q)$
such that for all $f\in L^q(Q)$,
\begin{align*}
lf=\int_Qf(x)g^Q(x)dx,
\end{align*}
and $\|g^Q\|_{L^{q'}(Q)}\le \|l\|\|\chi_Q\|_{L^{p(\cdot)}}|Q|^{-1/q}$.
Take a sequence $\{Q_j\}_{j\in\mathbb N}$ of cubes
such that $Q_j\subset Q_{j+1}$, $\cup_{j\in\mathbb N}=\mathbb R^n$ and $\ell(Q_1)\ge 1$.
Similarly, we know that there exists a  $g^{Q_j}\in L^{q'}(Q_j)$ such that
for each $Q_j$,
\begin{align*}
lf=\int_{Q_j}f(x)g^{Q_j}(x)dx,
\end{align*}
and $\|g^{Q_j}\|_{L^{q'}(Q-j)}\le \|l\|\|\chi_{Q_j}\|_{L^{p(\cdot)}}|Q_j|^{-1/q}$.
Then we can construct a function $g$ such that
for all $f\in L^q(Q_j)$
\begin{align*}
lf=\int_{Q_j}f(x)g(x)dx.
\end{align*}
Assume that $f\in L^q(Q_1)$. We have that 
there exists a  $g^{Q_1}\in L^{q'}(Q_1)$ such that,
\begin{align*}
lf=\int_{Q_1}f(x)g^{Q_1}(x)dx.
\end{align*}
Observe that $f\in L^q(Q_1)\subset L^q(Q_2)$
and it follows that there exists a  $g^{Q_2}\in L^{q'}(Q_2)$ such that
\begin{align*}
lf=\int_{Q_1}f(x)g^{Q_1}(x)dx=\int_{Q_2}f(x)g^{Q_2}(x)dx.
\end{align*}
Therefore, for all $f\in L^q(Q_1)$,
\begin{align*}
\int_{Q_1}f(x)(g^{Q_1}(x)-g^{Q_2}(x))dx=0,
\end{align*}
which implies that $g^{Q_1}(x)=g^{Q_2}(x)$, 
where $x\in Q_1$. 
Setting $g(x)=g^{Q_1}(x)$ when $x\in Q_1$
and $g(x)=g^{Q_2}(x)$ when $x\in Q_2\setminus Q_1$. Then for all $f\in L^q(Q_j)$ with $j=1,2$, we have 
\begin{align*}
lf=\int_{Q_j}f(x)g(x)dx.
\end{align*}
Repeating the similar argument, we can obtain a $g(x)$ such that the above equality holds for all $j\in\mathbb N$, which implies that
\begin{align*}
lf=\big<g,f\big>=\int_{\mathbb R^n} f(x)g(x)dx
\end{align*}
also holds for all $f\in h_{fin}^{p(\cdot),q}$.
Choose any $Q\subset \mathbb R^n$ with $\ell(Q)\ge1$, $\|f\|_{L^q}\le1$, and $\mbox{supp} f\subset Q$. We write $a\equiv C|Q|^{1/q}f(x)\chi_Q(x)$
with a suitable constant $C$. Then $a$ is a special
local $(p(\cdot),q)-$ atom. Then by the equality $l_a=\int_Qa(x)g(x)dx$ and $l\in (h^{p(\cdot)})'$,
we obtain that
\begin{align*}
|la|=\left|\int_Qa(x)g(x)dx\right|\le\|l\|\|\chi_Q\|_{L^p(\cdot)}.
\end{align*}
The above inequality implies that
\begin{align*}
\|\chi_Q\|^{-1}_{L^p(\cdot)}|Q|^{-1/q}\left|\int_Qf(x)g(x)dx\right|\le\|l\|.
\end{align*}
It follows that
\begin{align*}
\frac{|Q|}{\|\chi_Q\|_{L^p(\cdot)}}
\left(\frac{1}{|Q|}\int_{Q}|g(x)|^{q'}dx\right)^{1/{q'}}
\le \|l\|.
\end{align*}
When $|Q|\le 1$, by using the similar argument
in \cite[p. 3724-p. 3725]{NS}, we can deduce that
\begin{align*}
\frac{|Q|}{\|\chi_Q\|_{L^p(\cdot)}}
\left(\frac{1}{|Q|}\int_{Q}|g(x)-P_Qg(x)|^{q'}dx\right)^{1/{q'}}
\le \|l\|.
\end{align*}
Therefore, we have shown that $g(x)$ belongs to $bmo^{p(\cdot)}$.  This finishes the proof of 
Theorem \ref{s5th1}.
\end{proof}

Repeating the almost same argument in the proof of
Theorem \ref{s5th1}, we immediately deduce the following theorem and we omit the details.  

\begin{thm}\label{s5th2}
Suppose that $p(\cdot)\in LH$, $0<p^-\le p^+\le1<
q<\infty$. 
The dual space of $h^{p(\cdot)}$
is $lip_{p(\cdot)}$ in the following sense:\\
\noindent (1) For any $g\in lip_{p(\cdot)}$, the linear functional $l_g$,
defined initially on $h_{fin}^{p(\cdot),q}$ has a unique extension to $h^{p(\cdot)}$ with $\|l_g\|\le C\|g\|_{lip_{p(\cdot)}}$.\\
\noindent (2) Conversely, for any $l\in(h^{p(\cdot)})'$, 
there exists a unique function $g\in lip_{p(\cdot)}$ 
such that 
$l_g(f)=\big<g,f\big>$ holds ture for all $f\in h_{fin}^{p(\cdot),q}$ with
$\|g\|_{lip_{p(\cdot)}}\le C\|l\|$.
\end{thm}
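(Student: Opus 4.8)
The plan is to mirror the proof of Theorem \ref{s5th1} almost verbatim, replacing the polynomial-oscillation functional defining $bmo^{p(\cdot)}$ by the pointwise-oscillation (Lipschitz) functional defining $lip_{p(\cdot)}$. The key structural reason this works is a standard Campanato-type fact: for $0<p^-\le p^+\le 1$ the exponent forces a positive smoothness order, so for cubes $Q$ with $|Q|<1$ the Campanato condition with polynomials of degree $d\ge d_{p(\cdot)}$ is equivalent to a pointwise Lipschitz-type (Hölder) estimate, and on cubes with $|Q|\ge 1$ the $L^{q'}$-average of $g$ is controlled by $\|g\|_{L^\infty(Q)}$; conversely the $L^\infty$ and Lipschitz seminorms dominate the corresponding averaged quantities. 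Hence $\|g\|_{bmo^{p(\cdot)}}\sim\|g\|_{lip_{p(\cdot)}}$ as seminorms (up to dimensional constants depending on $d$), and Theorem \ref{s5th2} follows from Theorem \ref{s5th1} once one checks that the function $g$ produced in part (2) of that theorem actually lies in $lip_{p(\cdot)}$, not merely in $bmo^{p(\cdot)}$.

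Concretely I would proceed as follows. For part (1): given $g\in lip_{p(\cdot)}$ and $f=\sum_{j=1}^M\lambda_j a_j\in h_{fin}^{p(\cdot),q}$, repeat the pairing estimate of Theorem \ref{s5th1}. For atoms supported on $Q_j$ with $|Q_j|<1$, use the cancellation $\int a_j x^\alpha dx=0$ for $|\alpha|\le d$ to subtract the Taylor polynomial of $g$ at $c_{Q_j}$; since $g\in lip_{p(\cdot)}$ one has, for $x\in Q_j$, $|g(x)-P(x)|\le C\,\ell(Q_j)^{d+1}\cdot\big(\text{suitable seminorm}\big)$, which after the size estimate $\|a_j\|_{L^q}\le|Q_j|^{1/q}$ and Hölder gives exactly $|\lambda_j|\,\|\chi_{Q_j}\|_{L^{p(\cdot)}}\,\|g\|_{lip_{p(\cdot)}}$, up to a constant. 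For $|Q_j|\ge 1$ use $\big(\frac1{|Q_j|}\int_{Q_j}|g|^{q'}\big)^{1/q'}\le\|g\|_{L^\infty(Q_j)}\le \frac{\|\chi_{Q_j}\|_{L^{p(\cdot)}}}{|Q_j|}\|g\|_{lip_{p(\cdot)}}$. Then sum using the claim (proved exactly as in Theorem \ref{s5th1}) that $\sum_{j=1}^M|\lambda_j|\|\chi_{Q_j}\|_{L^{p(\cdot)}}\le C\|f\|_{h_{fin,\ast}^{p(\cdot),q}}$, and invoke Theorem \ref{s4th3} to bound the right side by $C\|f\|_{h^{p(\cdot)}}$; density of $h_{fin}^{p(\cdot),q}$ in $h^{p(\cdot)}$ (Corollary \ref{s4c1}) gives the unique bounded extension.

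For part (2): as in Theorem \ref{s5th1} one first shows $\|a\|_{h^{p(\cdot)}}\le C\|\chi_Q\|_{L^{p(\cdot)}}$ for every special local $(p(\cdot),q)$-atom, then exhausts $\mathbb R^n$ by an increasing sequence of cubes $\{Q_j\}$ with $\ell(Q_j)\ge 1$ and uses $L^q$-$L^{q'}$ duality on each $Q_j$ to produce a single locally $q'$-integrable $g$ with $l(f)=\int_{\mathbb R^n}fg$ for all $f\in h_{fin}^{p(\cdot),q}$. Testing against atoms supported on a fixed cube $Q$ then yields $\frac{|Q|}{\|\chi_Q\|_{L^{p(\cdot)}}}\big(\frac1{|Q|}\int_Q|g|^{q'}\big)^{1/q'}\le\|l\|$ when $|Q|\ge 1$ and $\frac{|Q|}{\|\chi_Q\|_{L^{p(\cdot)}}}\big(\frac1{|Q|}\int_Q|g-P_Qg|^{q'}\big)^{1/q'}\le\|l\|$ when $|Q|<1$, i.e. $g\in bmo^{p(\cdot)}$ with $\|g\|_{bmo^{p(\cdot)}}\le C\|l\|$. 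The genuinely new point is upgrading this to $g\in lip_{p(\cdot)}$: here one invokes the Campanato–Lipschitz equivalence, namely that a locally integrable function whose degree-$d$ polynomial oscillation over cubes $Q$ with $|Q|<1$ is $O\!\big(\|\chi_Q\|_{L^{p(\cdot)}}/|Q|\big)$ (and which is bounded by $O\!\big(\|\chi_Q\|_{L^{p(\cdot)}}/|Q|\big)$ in $L^\infty$-average over large cubes) agrees a.e.\ with a function satisfying the corresponding pointwise Lipschitz/Hölder bound, with comparable seminorm. This is a local analogue of the classical Meyers–Campanato theorem; using Lemma \ref{s2l7} to translate $\|\chi_Q\|_{L^{p(\cdot)}}$ into powers of $|Q|$, the growth exponent $d+1-n/p^-$ (roughly) is seen to be positive by the choice $d\ge d_{p(\cdot)}$, so the equivalence applies.

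The main obstacle I anticipate is precisely this last equivalence between the Campanato seminorm $\|\cdot\|_{bmo^{p(\cdot)}}$ and the pointwise Lipschitz seminorm $\|\cdot\|_{lip_{p(\cdot)}}$ in the variable-exponent setting: one must verify that $\|\chi_Q\|_{L^{p(\cdot)}}/|Q|$ behaves, as $\ell(Q)\to 0$, like a positive power of $\ell(Q)$ (uniformly, via the $LH$ condition and Lemma \ref{s2l7}) so that the standard telescoping/chaining argument producing Hölder continuity from controlled polynomial oscillation goes through. The matching of polynomials $P_Qg$ across nested cubes and the bookkeeping of the dimensional constants (which depend on $d$, hence on $p(\cdot)$) is routine but must be done carefully; everything else is a line-by-line transcription of the proof of Theorem \ref{s5th1}, which is why the details are omitted in the statement above.
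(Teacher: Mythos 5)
The paper does not actually supply a proof of Theorem~\ref{s5th2}: it says only ``Repeating the almost same argument in the proof of Theorem~\ref{s5th1}\ldots we omit the details.''  Your plan---to deduce the statement from Theorem~\ref{s5th1} via an equivalence $\|\cdot\|_{bmo^{p(\cdot)}}\sim\|\cdot\|_{lip_{p(\cdot)}}$---is a reasonable outline, but as written it contains one wrong intermediate bound and one genuine unresolved gap.

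The incorrect bound is in your sketch of part~(1).  You write that for $g\in lip_{p(\cdot)}$ and $x\in Q_j$ one has $|g(x)-P(x)|\le C\,\ell(Q_j)^{d+1}\cdot(\text{seminorm})$ for a degree-$d$ Taylor polynomial $P$.  But $lip_{p(\cdot)}$ as defined in Definition~\ref{s5d2} controls only the \emph{first difference} $|g(x)-g(y)|$; it does not provide any degree-$(d+1)$ Taylor remainder estimate against a polynomial of degree $d\ge1$.  The correct (and simpler) way to run part~(1) is to use only the $|\alpha|=0$ moment of the atom: since $\int a_j=0$, one has $\int a_j g=\int a_j(g-g(c_{Q_j}))$, and then $|g(x)-g(c_{Q_j})|\le\|\chi_{Q_j}\|_{L^{p(\cdot)}}|Q_j|^{-1}\|g\|_{lip_{p(\cdot)}}$ from the definition, which combined with $\|a_j\|_{L^q}\le|Q_j|^{1/q}$ and H\"older gives $|\int a_j g|\le\|\chi_{Q_j}\|_{L^{p(\cdot)}}\|g\|_{lip_{p(\cdot)}}$; the $|Q_j|\ge1$ case is handled as you indicate.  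So part~(1) goes through once this is repaired---it is actually \emph{easier} than the $bmo^{p(\cdot)}$ version because no higher moments are used.

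The genuine gap is in part~(2) and in the equivalence you rely on.  Having constructed $g\in bmo^{p(\cdot)}$ as in Theorem~\ref{s5th1}, you must upgrade the averaged polynomial-oscillation bound to the pointwise statements defining $lip_{p(\cdot)}$.  You correctly identify this as a local Meyers--Campanato step, but you do not carry it out, and there is a real obstruction lurking: $bmo^{p(\cdot)}$ is defined using oscillation against polynomials of degree $d\ge d_{p(\cdot)}$, whereas $lip_{p(\cdot)}$ as given is strictly first order.  When $p^-\le n/(n+1)$ one has $d_{p(\cdot)}\ge1$, and the smoothness exponent $n/p(x)-n$ coming from $\|\chi_Q\|_{L^{p(\cdot)}}/|Q|\sim|Q|^{1/p(x)-1}$ can exceed~$1$ on a set of positive measure; in that range a first-difference bound $|g(x)-g(y)|\lesssim|x-y|^{\alpha}$ with $\alpha>1$ forces $g$ to be locally constant, so the first-order $lip_{p(\cdot)}$ space degenerates while $bmo^{p(\cdot)}$ does not.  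Thus the equivalence $bmo^{p(\cdot)}=lip_{p(\cdot)}$, and with it your deduction of Theorem~\ref{s5th2} from Theorem~\ref{s5th1}, cannot hold in the generality $0<p^-\le p^+\le1$ unless one either restricts to $p^->n/(n+1)$ (so that $d_{p(\cdot)}=0$ and the classical first-order Campanato/Meyers argument applies), or replaces $lip_{p(\cdot)}$ by a higher-order modulus (divided differences of order $d+1$, or a best-polynomial-approximation condition).  Your proposal neither imposes the restriction nor carries out the chaining argument that would make the equivalence precise, so as it stands it does not yield a complete proof of the theorem as stated.
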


Next, we also state the Carleson measure characterization for the dual of $h^{p(\cdot)}$.
We see that the local variable
Carleson measure space $cmo^{p(\cdot)}$
is the dual space of the local variable Hardy space $h^{p(\cdot)}$ in \cite{TWL}. We need some notations.
Denote by $\ell(Q)=2^{-j}$ the side length of $Q=Q_{j{\bf k}}$, $\bf k\in\mathbb Z^n$.
Denote by $z_Q=2^{-j}{\bf k}$
the left lower corner of $Q$
and by $x_Q$ is any point in $Q$ when $Q=Q_{j{\bf k}}$.
Denote $\Pi_{j}=\{Q:
Q=Q_{j{\bf k}}\}$ and $\Pi=
\cup_{j\in\mathbb{N}}\Pi_{j}$.
For any function
$\psi$ defined on $\mathbb R^n,$ $j\in\mathbb Z$, and $Q=Q_{j{\bf k}}$, set
\begin{align*}
\psi_j(x)=2^{jn}\psi(2^{j}x),\quad \psi_Q(x)=|Q|^{1/2}\psi_j({x-z_Q}).
\end{align*}
For more detail on Carleson measure spaces, please see \cite{HHL1,HHLT,HLW,LL,T19}.
\begin{defn}\label{cp}
The local variable Carleson measure space
$cmo^{p(\cdot)}(\mathbb R^n)$ is the collection of all $f\in \mathcal D'$ fulfilling
$$
\|f\|_{cmo^{p(\cdot)}}\equiv\sup_{P\in\Pi}\left\{\frac{|P|}{\|\chi_P\|^2_{p(\cdot)}}
\int_{P}\sum_{j\in\mathbb N}\sum_{Q\in \Pi_j,
\;Q\subset
P}|Q|^{-1}|\left<f, \psi_Q\right>|^2\chi_{Q}(x)dx\right\}^{1/2}<\infty.
$$
\end{defn}

\begin{thm}\cite{TWL}\label{s5th3}
Suppose that $p(\cdot)\in LH$, $0<p^-\le p^+\leq1$. The dual space of $h^{p(\cdot)}$
is $cmo^{p(\cdot)}$ in the following sense.\\
\noindent (1) For $g\in cmo^{p(\cdot)}$, the linear functional $l_g$,
defined initially on $\mathcal D$, extends to a continuous linear functional
on $h^{p(\cdot)}$ with $\|l_g\|\le C\|g\|_{cmo^{p(\cdot)}}$.

\noindent (2) Conversely, every continuous linear functional $l$ on $h^{p(\cdot)}$
satisfies $l=l_g$ for some $g\in cmo^{p(\cdot)}$ with
$\|g\|_{cmo^{p(\cdot)}}\le C\|l\|$.
\end{thm}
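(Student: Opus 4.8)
The plan is to derive Theorem \ref{s5th3} from the discrete Littlewood--Paley--Stein characterization of $h^{p(\cdot)}$ in Remark \ref{s1r1}, combined with a sequence-space duality of tent type, in the spirit of Fefferman's $H^1$--$\mathrm{BMO}$ duality adapted to the variable-exponent inhomogeneous setting. Fix $\varphi,\Phi$ as in Definition \ref{s3d3} and a dual pair $\widetilde\varphi,\widetilde\Phi$ (with $\widehat{\widetilde\varphi}$ supported in an annulus and $\widehat{\widetilde\Phi}$ near the origin) realizing a discrete Calder\'on reproducing formula
\begin{align*}
f&=\sum_{Q\in\Pi_0}|Q|\,(\Phi\ast f)(z_Q)\,\widetilde\Phi_Q\\
&\quad+\sum_{j\ge 1}\sum_{Q\in\Pi_j}|Q|\,(\varphi_j\ast f)(z_Q)\,\widetilde\varphi_Q,
\end{align*}
valid in $\mathcal D'$ and, for $f\in L^2\cap h^{p(\cdot)}$, in $h^{p(\cdot)}$; here $\widetilde\varphi_Q$ and $\widetilde\Phi_Q$ are the $L^2$-normalized bumps attached to $Q$ exactly as the $\psi_Q$ of Definition \ref{cp}. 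By Remark \ref{s1r1} one has $\|f\|_{h^{p(\cdot)}}\sim\|\mathcal G_{loc}^d(f)\|_{L^{p(\cdot)}}$, so $f\mapsto s(f):=\{\langle f,\psi_Q\rangle\}_{Q\in\Pi}$ embeds $h^{p(\cdot)}$ isomorphically onto a closed subspace of the tent sequence space $T^{p(\cdot)}$, whose norm is the $L^{p(\cdot)}$-norm of $\big(\sum_{Q\in\Pi}|Q|^{-1}|s_Q|^2\chi_Q\big)^{1/2}$; likewise $\|g\|_{cmo^{p(\cdot)}}$ is exactly the norm of $\{\langle g,\psi_Q\rangle\}$ in the Carleson sequence space $C^{p(\cdot)}$ with norm $\sup_{P\in\Pi}\frac{|P|^{1/2}}{\|\chi_P\|_{p(\cdot)}}\big(\sum_{Q\subset P}|t_Q|^2\big)^{1/2}$.

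For part (1), take $g\in cmo^{p(\cdot)}$ and $f\in h_{fin}^{p(\cdot),q}$, which is a dense class by Corollary \ref{s4c1}. Expanding $\langle g,f\rangle$ by the reproducing formula and grouping by $(j,Q)$ gives, up to normalizing constants, an absolutely convergent sum $\langle g,f\rangle=\sum_{j\ge0}\sum_{Q\in\Pi_j}\langle g,\psi_Q\rangle\,\overline{\langle f,\psi_Q\rangle}$. The heart of the matter is then the tent-space H\"older inequality
\begin{align*}
\Big|\sum_{j\ge0}\sum_{Q\in\Pi_j}t_Q\,\overline{s_Q}\Big|\le C\,\|t\|_{C^{p(\cdot)}}\,\|s\|_{T^{p(\cdot)}},
\end{align*}
which I would prove by a dyadic stopping-time decomposition: for $m\in\mathbb Z$ let $\{P^m_\ell\}_\ell$ be the maximal dyadic cubes on which the tent maximal function $\big(\sum_{Q\subset\cdot}|Q|^{-1}|s_Q|^2\chi_Q\big)^{1/2}$ exceeds $2^m$, split the index set $\Pi$ into the pieces assigned to each $P^m_\ell$, apply Cauchy--Schwarz on each piece against the defining inequality of $C^{p(\cdot)}$, and then sum over $m$ and $\ell$ using that $\sum_{m,\ell}\|\chi_{P^m_\ell}\|_{p(\cdot)}$ is controlled by $\|(\sum_{Q}|Q|^{-1}|s_Q|^2\chi_Q)^{1/2}\|_{L^{p(\cdot)}}$ --- a variable-exponent Carleson embedding that follows from Lemmas \ref{s2l5} and \ref{s2l7} together with the $LH$ hypothesis, Lemma \ref{s2l4} replacing the triangle inequality when $p^-\le1$. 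Combining with the embedding gives $|l_g(f)|\le C\|g\|_{cmo^{p(\cdot)}}\|f\|_{h^{p(\cdot)}}$, and density furnishes the unique extension to $h^{p(\cdot)}$.

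For part (2), given $l\in(h^{p(\cdot)})'$, transport it along $s(\cdot)$: it defines a bounded functional on the closed subspace $s(h^{p(\cdot)})\subset T^{p(\cdot)}$, which the Hahn--Banach theorem extends to all of $T^{p(\cdot)}$ with the same norm. The structural input needed here is the duality $(T^{p(\cdot)})^\ast=C^{p(\cdot)}$ with comparable norms: every bounded functional on $T^{p(\cdot)}$ is represented by pairing with some $t=\{t_Q\}\in C^{p(\cdot)}$. Granting this, set $g:=\sum_{j\ge0}\sum_{Q\in\Pi_j}c_Q\,\overline{t_Q}\,\psi_Q$ with appropriate normalizing constants $c_Q$; the partial sums are uniformly bounded in $cmo^{p(\cdot)}$ by the pairing estimate of part (1), so the series converges in $\mathcal D'$ and $g\in cmo^{p(\cdot)}$ with $\|g\|_{cmo^{p(\cdot)}}\le C\|l\|$. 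One then checks $l(f)=\langle g,f\rangle$ for $f\in h_{fin}^{p(\cdot),q}$ via the reproducing formula and extends to $h^{p(\cdot)}$ by density; uniqueness of $g$ follows because two representers pair identically with every finite atom.

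I expect the main obstacle to be the tent-space duality $(T^{p(\cdot)})^\ast=C^{p(\cdot)}$ and the accompanying variable-exponent Carleson embedding: the classical Fefferman argument must be rerun with $\|\chi_P\|_{p(\cdot)}$ in place of $|P|^{1/p}$ and with $L^{p(\cdot)}$-quasi-norms in the stopping-time bookkeeping, using Lemma \ref{s2l4} when $p^-\le1$. A secondary nuisance is the inhomogeneous truncation: the scale $j=0$ (the $\Phi$-term, which carries no cancellation) must be split off throughout, exactly as the cubes with $|Q|\ge1$ are treated separately in Sections 3 and 4. One could also bypass the tent machinery by proving $cmo^{p(\cdot)}=bmo^{p(\cdot)}$ with equivalent norms --- the inclusion $bmo^{p(\cdot)}\hookrightarrow cmo^{p(\cdot)}$ being a direct Carleson estimate on $\varphi_j\ast(g-P_Qg)$ and the reverse a localized John--Nirenberg argument --- and then invoking Theorem \ref{s5th1}, but this route hides the same analytic content inside the John--Nirenberg step.
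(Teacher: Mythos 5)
The paper does not actually prove Theorem \ref{s5th3}; it is cited verbatim from [TWL] (Tan--Wang--Liao, preprint), so there is no in-paper argument to compare against. Your outline---reducing to discrete Littlewood--Paley coefficients via Remark \ref{s1r1}, embedding $h^{p(\cdot)}$ into a tent sequence space $T^{p(\cdot)}$, identifying $cmo^{p(\cdot)}$ with the Carleson sequence space $C^{p(\cdot)}$, and invoking $(T^{p(\cdot)})^{\ast}=C^{p(\cdot)}$ plus a Hahn--Banach extension---is the standard Fefferman-type route and is almost certainly the strategy of [TWL], given the same author's homogeneous analogue in [T19]. So the shape of the argument is right and matches the literature this theorem leans on.

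However, as written the sketch pushes the entire content of the theorem into the one box you do not open: the variable-exponent tent-space duality and the Carleson embedding beneath it. Two points there need more than a gesture. First, you write that ``$\sum_{m,\ell}\|\chi_{P^m_\ell}\|_{p(\cdot)}$ is controlled''---the level factor $2^m$ has been dropped, and the quantity you actually need to bound is $\sum_{m,\ell}2^m\|\chi_{P^m_\ell}\|_{p(\cdot)}$; more importantly, since $\|\cdot\|_{L^{p(\cdot)}}$ is not additive over disjoint sets, converting that sum into $\big\|\big(\sum_Q|Q|^{-1}|s_Q|^2\chi_Q\big)^{1/2}\big\|_{L^{p(\cdot)}}$ requires the modular convexity estimate valid when $p^+\le1$ (the same one used in the proof of Theorem \ref{s5th1} to show $\sum_j\lambda_j\|\chi_{Q_j}\|_{p(\cdot)}\le\kappa$), not Lemma \ref{s2l4}, which only uses $p^-\le1$---indeed this is precisely where the hypothesis $p^+\le1$ enters and why the statement is restricted to that range. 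Second, in part (2), uniform boundedness of the partial sums of $g=\sum c_Q\,\overline{t_Q}\,\psi_Q$ in $cmo^{p(\cdot)}$ does not by itself give convergence in $\mathcal D'$; one must supply a local estimate (e.g., Carleson-to-$L^2_{\mathrm{loc}}$) before testing against Schwartz functions. Neither point is fatal---they are exactly the analytic content [TWL]/[T19] must supply---but they have to be carried out for the sketch to become a proof.
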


From Theorems \ref{s5th1}, \ref{s5th2} and \ref{s5th3}, we immediately deduce the following equivalent definitions of the dual of local variable Hardy spaces.
\begin{cor}\label{s5c1}
Let $p(\cdot)\in LH$ and $0<p^-\le p^+\leq1<q<\infty$.
Then local variable Campanato space $bmo^{p(\cdot)}$, local variable Lipschitz spaces $lip_{p(\cdot)}$ and local variable Carleson measure spaces $cmo^{p(\cdot)}$
coincide as sets and
$$\|f\|_{bmo^{p(\cdot)}}\sim\|f\|_{lip_{p(\cdot)}}
\sim\|f\|_{cmo^{p(\cdot)}}.$$
\end{cor}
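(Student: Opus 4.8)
\textbf{Proof proposal for Corollary \ref{s5c1}.}
The plan is to deduce the set-coincidence and the norm-equivalence purely from the three duality theorems already in hand, namely Theorems \ref{s5th1}, \ref{s5th2} and \ref{s5th3}, without reproving any duality statement from scratch. The key observation is that all three of $bmo^{p(\cdot)}$, $lip_{p(\cdot)}$ and $cmo^{p(\cdot)}$ have been identified with the \emph{same} space $(h^{p(\cdot)})'$, the continuous dual of $h^{p(\cdot)}$, equipped with its operator norm $\|\cdot\|$; hence they must coincide as sets and carry equivalent norms. The main thing to be careful about is that the three identifications are stated with slightly different a priori domains on which the pairing $l_g(f)=\langle g,f\rangle$ is declared to hold (Theorems \ref{s5th1} and \ref{s5th2} use $h_{fin}^{p(\cdot),q}$, while Theorem \ref{s5th3} uses $\mathcal D$), so one must check that the various dense subspaces are genuinely dense in $h^{p(\cdot)}$ and that the extensions agree; this density is exactly Corollary \ref{s4c1} together with Remark \ref{s3r2}, and $\mathcal D\subset h_{fin}^{p(\cdot),q}$ after a harmless cutoff, so there is no ambiguity in the extension.

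Concretely, I would argue as follows. Fix $p(\cdot)\in LH$ with $0<p^-\le p^+\le 1<q<\infty$. Given $g\in bmo^{p(\cdot)}$, Theorem \ref{s5th1}(1) produces a bounded functional $l_g\in(h^{p(\cdot)})'$ with $\|l_g\|\le C\|g\|_{bmo^{p(\cdot)}}$. By Theorem \ref{s5th3}(2) there is $\tilde g\in cmo^{p(\cdot)}$ with $l_{\tilde g}=l_g$ on $h^{p(\cdot)}$ and $\|\tilde g\|_{cmo^{p(\cdot)}}\le C\|l_g\|\le C\|g\|_{bmo^{p(\cdot)}}$. Since $l_g$ and $l_{\tilde g}$ agree on the dense subspace $\mathcal D$ (here one uses that on $\mathcal D$ both functionals are given by integration against $g$, respectively $\tilde g$), one concludes $g=\tilde g$ as elements of $\mathcal D'$, so $g\in cmo^{p(\cdot)}$ with $\|g\|_{cmo^{p(\cdot)}}\le C\|g\|_{bmo^{p(\cdot)}}$. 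Running the same chain in the reverse direction — starting from $g\in cmo^{p(\cdot)}$, invoking Theorem \ref{s5th3}(1) and then Theorem \ref{s5th1}(2) — yields $\|g\|_{bmo^{p(\cdot)}}\le C\|g\|_{cmo^{p(\cdot)}}$. This proves $bmo^{p(\cdot)}=cmo^{p(\cdot)}$ with equivalent norms, and the identical two-step argument with Theorem \ref{s5th2} in place of Theorem \ref{s5th1} gives $lip_{p(\cdot)}=cmo^{p(\cdot)}$ with equivalent norms. Chaining the equivalences gives $\|f\|_{bmo^{p(\cdot)}}\sim\|f\|_{lip_{p(\cdot)}}\sim\|f\|_{cmo^{p(\cdot)}}$ for all $f$ in the common space.

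The only genuinely delicate point — and the step I would expect to cost the most care — is verifying that two functionals that coincide on the ``test'' subclass used in one theorem in fact coincide on the one used in another, i.e. the compatibility of the extensions. This reduces to the assertion that $\mathcal D$, $h_{fin}^{p(\cdot),q}$, and $L^q\cap h^{p(\cdot)}$ are each dense in $h^{p(\cdot)}$; the second is Corollary \ref{s4c1} (finite atomic decompositions approximate in $h^{p(\cdot)}$-norm), the third is recorded in Remark \ref{s3r2}, and $\mathcal D$ is dense because any special local $(p(\cdot),q)$-atom can be approximated in $L^q$ (hence, by Theorem \ref{s4th1}, in $h^{p(\cdot)}$) by smooth functions with the same support and moment conditions. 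Once density is in place, two bounded functionals agreeing on a dense set are equal, and the identification $g=\tilde g$ in $\mathcal D'$ is automatic. Apart from this bookkeeping, the corollary is a formal consequence of the three preceding theorems, so I would keep the write-up short and merely point to where each ingredient is proved.
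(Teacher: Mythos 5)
Your proposal is correct and matches the paper's approach exactly: the paper offers no proof beyond the phrase ``we immediately deduce,'' and your argument supplies the routine bookkeeping — composing the two halves of each duality theorem to get $\|g\|_{cmo^{p(\cdot)}}\le C\|g\|_{bmo^{p(\cdot)}}\le C'\|g\|_{cmo^{p(\cdot)}}$ and likewise for $lip_{p(\cdot)}$ — that the paper leaves implicit. You also correctly identify the one genuinely delicate point, the compatibility of extensions across the different dense test classes ($h_{fin}^{p(\cdot),q}$ versus $\mathcal D$), and your density argument via Corollary \ref{s4c1} and Remark \ref{s3r2} disposes of it properly.
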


\subsection{Duality of $h^{p(\cdot)}$ with $0<p^-\le p^+<\infty$}

In this subsection, we introduce a kind of local variable Campanato type space $\widetilde{bmo}^{p(\cdot)}$. Inspired by \cite{HuYY}, we show that the dual space of $h^{p(\cdot)}$ is $\widetilde{bmo}^{p(\cdot)}$, for all $p(\cdot)\in LH$ fulfilling $0<p^-\le p^+<\infty$.  To prove it, we also need to prove that
a kind of variable Campanato type space $\widetilde{BMO}^{p(\cdot)}$ is the dual space of the global variable Hardy space $H^{p(\cdot)}$, which gives a complete answer to the open question proposed by Izuki et al. in \cite{INS}.
 We begin with some definitions.
The variable Campanato type space $\widetilde{BMO}^{p(\cdot)}$ is defined as follows.
\begin{defn}\label{s5d4}
Suppose that $p(\cdot)\in LH$, $0<p^-\le p^+<\infty$ and $\leq 1< q< \infty$. 
Let 
\begin{align*}
\|f\|_{\widetilde{BMO}^{p(\cdot)}}\equiv
\sup\left\|\sum_{i=1}^M\lambda_i\chi_{Q_i}\right\|^{-1}_{L^{p(\cdot)}}
\sum_{j=1}^M\left\{
{\lambda_j|Q_j|}\left(\frac{1}{|Q_j|}\int_{Q_j}|f(x)-P_{Q_j}f(x)|^{q'}dx\right)^{1/{q'}}\right\},
\end{align*}
where the supreme are taken over all 
$M\in\mathbb N$, the cubes $Q_j\subset \mathbb R^n$, and non-negative numbers $\{\lambda_j\}_{j=1}^M$
satisfying $\sum_{j=1}^M\lambda_j
\|\chi_{Q_j}\|_{L^{p(\cdot)}}\neq 0$.
Then the function spaces
$$
\widetilde{BMO}^{p(\cdot)}(\mathbb R^n)=\{f\in L_{loc}:
\|f\|_{\widetilde{BMO}^{p(\cdot)}}<\infty\}
$$
are called the variable Campanato type spaces.
\end{defn}


\begin{defn}\label{s5d5}
Suppose that $p(\cdot)\in LH$, $0<p^-\le p^+<\infty$ and $\leq 1< q< \infty$. 
Let \begin{align*}
&\|f\|_{\widetilde{bmo}^{p(\cdot)}}\equiv
\sup_{|Q|<1}\left\|\sum_{i=1}^M\lambda_i\chi_{Q_i}\right\|^{-1}_{L^{p(\cdot)}}
\sum_{j=1}^M\left\{
{\lambda_j|Q_j|}\left(\frac{1}{|Q_j|}\int_{Q_j}|f(x)-P_{Q_j}f(x)|^{q'}dx\right)^{1/{q'}}\right\}
\\&+
\sup_{|Q|\ge1}\left\|\sum_{i=1}^M\lambda_i\chi_{Q_i}\right\|^{-1}_{L^{p(\cdot)}}
\sum_{j=1}^M\left\{
{\lambda_j|Q_j|}
\left(\frac{1}{|Q|}\int_{Q}|f(x)|^{q'}dx\right)^{1/{q'}}\right\},
\end{align*}
where the first supreme are taken over all 
$M\in\mathbb N$, the cubes $Q_j\subset \mathbb R^n$ with $|Q|<1$, and non-negative numbers $\{\lambda_j\}_{j=1}^M$
satisfying $\sum_{j=1}^M\lambda_j
\|\chi_{Q_j}\|_{L^{p(\cdot)}}\neq 0$
and the second supreme are taken over all 
$M\in\mathbb N$, the cubes $Q_j\subset \mathbb R^n$ with $|Q|\ge1$, and non-negative numbers $\{\lambda_j\}_{j=1}^M$
satisfying $\sum_{j=1}^M\lambda_j
\|\chi_{Q_j}\|_{L^{p(\cdot)}}\neq 0$.
Then the function spaces
$$
\widetilde{bmo}^{p(\cdot)}(\mathbb R^n)
=\{f\in L_{loc}:
\|f\|_{\widetilde{bmo}^{p(\cdot)}}<\infty\}
$$
are called the local variable Campanato type spaces.
\end{defn}

Next, we obtain the duality between $H^{p(\cdot)}(\mathbb R^n)$ and $\widetilde{BMO}^{p(\cdot)}(\mathbb R^n)$ with $0<p^-\le p^+<\infty$.
\begin{thm}\label{s5th4}
Suppose that $p(\cdot)\in LH\cap \mathcal P^0$, $(p^+\vee 1)<
q<\infty$. 
The dual space of $H^{p(\cdot)}$
is $\widetilde{BMO}^{p(\cdot)}$ in the following sense:\\
\noindent (1) Let $g\in \widetilde{BMO}^{p(\cdot)}$.
Then the linear functional $L_g: f\rightarrow L_g(f)=
\big<g,f\big>$,
defined initially on $H_{fin}^{p(\cdot),q}$ has a bounded extension to $H^{p(\cdot)}$ with $\|L_g\|\le C\|g\|_{\widetilde{BMO}^{p(\cdot)}}$.\\
\noindent (2) Conversely, for any $L\in(H^{p(\cdot)})'$, 
there exists a unique function $g\in\widetilde{BMO}^{p(\cdot)}$ such that 
$L_g(f)=\big<g,f\big>$ holds ture for all $f\in H_{fin}^{p(\cdot),q}$ with
$\|g\|_{\widetilde{BMO}^{p(\cdot)}}\le C\|L\|$.
\end{thm}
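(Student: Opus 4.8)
The plan is to mirror the proof of Theorem \ref{s5th1}, but with the single–cube Campanato seminorm replaced throughout by the finite–sum quantity $\|\cdot\|_{\widetilde{BMO}^{p(\cdot)}}$; this substitution is precisely what makes the argument survive the range $p^+>1$, where the quasi–Banach device of Lemma \ref{s2l4} is unavailable and the dual estimate cannot be localized to a single cube (this is the viewpoint of \cite{HuYY}). Two ingredients are needed at the outset. First, the finite atomic decomposition for the \emph{global} space: $\|f\|_{H^{p(\cdot)}}\sim\|f\|_{H_{fin}^{p(\cdot),q}}$ for $(p^+\vee1)<q<\infty$, proved exactly as in Theorem \ref{s4th3} (cf.\ also \cite{S}), together with the density of $H_{fin}^{p(\cdot),q}$ in $H^{p(\cdot)}$. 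Second, the atom bound $\|a\|_{H^{p(\cdot)}}\le C\|\chi_Q\|_{L^{p(\cdot)}}$ for every special $(p(\cdot),q)$–atom $a$ of $H^{p(\cdot)}$ supported in a cube $Q$; this is obtained as in the proof of Theorem \ref{s5th1} — on $\widetilde Q$ by H\"older together with $\|Ma\|_{L^q}\lesssim\|a\|_{L^q}\le|Q|^{1/q}$ (here $q>1$ makes $M$ bounded on $L^q$), and on $(\widetilde Q)^c$ by the decay $\mathcal M_\varphi a(x)\lesssim\ell(Q)^{n+d+1}(\ell(Q)+|x-c_Q|)^{-(n+d+1)}$, which in the global case holds for every scale because atoms carry the full cancellation $\int a\,x^\alpha\,dx=0$, $|\alpha|\le d$, so that $\|(\mathcal M_\varphi a)\chi_{(\widetilde Q)^c}\|_{L^{p(\cdot)}}\lesssim\|(M\chi_{\widetilde Q})^{(n+d+1)/n}\|_{L^{p(\cdot)}}\lesssim\|\chi_Q\|_{L^{p(\cdot)}}$ by $p^-(n+d+1)>n$ and Lemma \ref{s2l1}.

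For (1), let $g\in\widetilde{BMO}^{p(\cdot)}$ and let $f=\sum_{j=1}^M\lambda_ja_j\in H_{fin}^{p(\cdot),q}$ with $\lambda_j\ge0$. Using the cancellation of $a_j$ to pass from $g$ to $g-P_{Q_j}g$ on $Q_j$, then H\"older and $\|a_j\|_{L^q}\le|Q_j|^{1/q}$,
\[
|\langle g,f\rangle|\le\sum_{j=1}^M\lambda_j|Q_j|\Big(\frac1{|Q_j|}\int_{Q_j}|g-P_{Q_j}g|^{q'}\,dx\Big)^{1/q'}\le\|g\|_{\widetilde{BMO}^{p(\cdot)}}\Big\|\sum_{j=1}^M\lambda_j\chi_{Q_j}\Big\|_{L^{p(\cdot)}},
\]
the last inequality being exactly the definition of $\|g\|_{\widetilde{BMO}^{p(\cdot)}}$. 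Taking the infimum over all finite atomic representations of $f$ and invoking $\|f\|_{H^{p(\cdot)}}\sim\|f\|_{H_{fin}^{p(\cdot),q}}$ yields $|L_g(f)|\le C\|g\|_{\widetilde{BMO}^{p(\cdot)}}\|f\|_{H^{p(\cdot)}}$, and density of $H_{fin}^{p(\cdot),q}$ furnishes the unique bounded extension with the claimed operator norm bound.

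For (2), take $L\in(H^{p(\cdot)})'$. For a cube $Q$, let $L^q_0(Q)$ denote the $L^q(Q)$–functions with vanishing moments up to order $d$; by the atom bound, every $f\in L^q_0(Q)$ is, after normalization, a multiple of a special atom, so $|L(f)|\le\|L\|\,\|\chi_Q\|_{L^{p(\cdot)}}|Q|^{-1/q}\|f\|_{L^q(Q)}$. Hahn--Banach and $L^q$--$L^{q'}$ duality then produce $g^Q\in L^{q'}(Q)$ representing $L$ on $L^q_0(Q)$, unique modulo $\mathcal P_d$; running this over an increasing sequence of cubes exhausting $\mathbb R^n$ and correcting successive representatives by polynomials of degree $\le d$ so that they agree on the previous cube, we obtain $g\in L^{q'}_{loc}$ with $L(f)=\langle g,f\rangle$ for all $f\in H_{fin}^{p(\cdot),q}$. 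To check $g\in\widetilde{BMO}^{p(\cdot)}$, fix $M$, cubes $\{Q_j\}_{j=1}^M$ and $\lambda_j\ge0$ with $\sum_j\lambda_j\|\chi_{Q_j}\|_{L^{p(\cdot)}}\neq0$; since $g-P_{Q_j}g\perp\mathcal P_d$ on $Q_j$, the standard $L^{q'}$–extremizer argument (using boundedness of the projection onto $\mathcal P_d$ in $L^q(Q_j)$) supplies a special atom $a_j$ supported in $Q_j$ with $\int_{Q_j}a_jg\,dx=\int_{Q_j}a_j(g-P_{Q_j}g)\,dx\ge c\,|Q_j|\big(\frac1{|Q_j|}\int_{Q_j}|g-P_{Q_j}g|^{q'}\big)^{1/q'}$. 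Then $\sum_j\lambda_ja_j\in H_{fin}^{p(\cdot),q}$, the atomic upper bound (Theorem \ref{s4th1}) gives $\big\|\sum_j\lambda_ja_j\big\|_{H^{p(\cdot)}}\le C\big\|\sum_j\lambda_j\chi_{Q_j}\big\|_{L^{p(\cdot)}}$, and hence
\[
\sum_{j=1}^M\lambda_j|Q_j|\Big(\frac1{|Q_j|}\int_{Q_j}|g-P_{Q_j}g|^{q'}\Big)^{1/q'}\le C\,\Big|L\Big(\sum_{j=1}^M\lambda_ja_j\Big)\Big|\le C\,\|L\|\,\Big\|\sum_{j=1}^M\lambda_j\chi_{Q_j}\Big\|_{L^{p(\cdot)}}.
\]
Dividing and taking the supremum gives $\|g\|_{\widetilde{BMO}^{p(\cdot)}}\le C\|L\|$, and uniqueness of $g$ (modulo $\mathcal P_d$, on which the seminorm vanishes) follows from density of $H_{fin}^{p(\cdot),q}$ in $H^{p(\cdot)}$.

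The hard part is managing the regime $p^+>1$: since the dual estimate genuinely cannot be collapsed to one cube, everything must be phrased in terms of \emph{finite} families of cubes and the finite atomic decomposition, and this bites in exactly two places — (a) establishing $\|f\|_{H^{p(\cdot)}}\sim\|f\|_{H_{fin}^{p(\cdot),q}}$ on the full range $0<p^-\le p^+<\infty$, which relies on the Grafakos--Kalton–type Lemma \ref{s2l6} and the norm comparisons of Lemma \ref{s2l7}; and (b) in part (2), simultaneously extremizing each $\int_{Q_j}a_j(g-P_{Q_j}g)$ while keeping $\big\|\sum_j\lambda_ja_j\big\|_{H^{p(\cdot)}}$ comparable to $\big\|\sum_j\lambda_j\chi_{Q_j}\big\|_{L^{p(\cdot)}}$, which is controlled not by any convexity but again by the vector-valued and Grafakos--Kalton maximal estimates of Lemmas \ref{s2l1} and \ref{s2l6}.
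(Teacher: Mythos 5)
Your proof is correct and follows essentially the same strategy as the paper's: part (1) via the cancellation/H\"older/size argument against the finite-sum definition of $\|\cdot\|_{\widetilde{BMO}^{p(\cdot)}}$, then the finite-atomic norm equivalence $\|f\|_{H^{p(\cdot)}}\sim\|f\|_{H_{fin}^{p(\cdot),q}}$; part (2) via $L^q$--$L^{q'}$ duality on cubes, patching over an exhausting sequence of cubes, and an extremizer construction feeding into the atomic upper bound for the global space. One place where you are actually more careful than the paper: the paper disposes of the construction of the representing function $g$ by simply saying ``repeating the similar argument to that used in the proof of Theorem~\ref{s5th1}.'' But in the local setting of Theorem~\ref{s5th1} the exhausting cubes all have $\ell(Q_j)\ge1$, so local atoms carry no moment conditions there, $L$ acts on all of $L^q(Q_j)$, and the representatives $g^{Q_j}$ patch \emph{exactly}. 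In the global setting every atom is required to have vanishing moments up to degree $d$, so $L$ is determined only on $L^q_0(Q_j)$ and the representatives are unique only modulo $\mathcal P_d$; your explicit step of correcting successive representatives by polynomials is precisely what is needed and is the point the paper elides. A minor slip: you invoke Theorem~\ref{s4th1} for the atomic upper bound $\|\sum_j\lambda_j a_j\|_{H^{p(\cdot)}}\lesssim\|\sum_j\lambda_j\chi_{Q_j}\|_{L^{p(\cdot)}}$, but that theorem is stated for the local space $h^{p(\cdot)}$; the global statement is the one the paper recalls from \cite[Proposition~2.4]{CMN} (or \cite[Theorem~7.8]{CW}). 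The mathematical content is the same; just cite the right reference.
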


\begin{proof}
To prove Theorem \ref{s5th4}, we first recall some known results on atom decomposition
characterizations of $H^{p(\cdot)}$, which can be found in \cite[Proposition 2.4]{CMN} (also see \cite[Theorem 7.8]{CW}).
Fix $(p^+\vee 1)<q<\infty$.
Given countable collections of cubes $\{Q_j\}_{j=1}^\infty$,
of non-negative coefficients  $\{\lambda_j\}_{j=1}^\infty$ and of the special $(p(\cdot),q)$-atoms $\{a_j\}_{j=1}^\infty$.
If 
\begin{equation*}
\left\|\sum_{j=1}^\infty\lambda_j\chi_{Q_j}\right\|_{L^{p(\cdot)}}<\infty.
\end{equation*}
Then the series $\sum_{j=1}^\infty \lambda_ja_j$
converges in $H^{p(\cdot)}$ and satisfies
\begin{equation*}
\left\|\sum_{j=1}^\infty \lambda_ja_j\right\|_{H^{p(\cdot)}}\le C\left\|\sum_{j=1}^\infty\lambda_j\chi_{Q_j}\right\|_{L^{p(\cdot)}}
\end{equation*}
Furthermore, 
the finite atomic variable Hardy spaces
$H_{fin}^{p(\cdot),q}$ is defined to be be the
set of all $f$ fulfilling that there exists an $M\in\mathbb N$, such that 
$f=\sum_{j=1}^M \lambda_ja_j$ with
the quasi-norm
 \begin{align*}
 \|f\|_{H_{fin}^{p(\cdot),q}}\equiv
 \inf\left\|\sum_{j=1}^M\lambda_j\chi_{Q_j}\right\|_{L^{p(\cdot)}}<\infty,
\end{align*}
where the infimum is taken over all decompositions of 
$f$ as above. Then $\|f\|_{H_{fin}^{p(\cdot),q}}$ and $\|f\|_{H^{p(\cdot)}}$ are equivalent
quasi-norms on $H_{fin}^{p(\cdot),q}$.
Now we prove $(1)$. Let $g\in \widetilde{BMO}^{p(\cdot)}$ and $f\in H_{fin}^{p(\cdot),q}$.
Then for some numbers $\{\lambda_j\}_{j=1}^M$
and the special $(p(\cdot),q)-$atoms, we have
$
f=\sum_{j=1}^M \lambda_ja_j
$.
By the atomic decomposition results of $H^{p(\cdot)}$, the cancellation of $a_j$, H\"older's inequality
and the size condition of $a_j$, we get that
\begin{align*}
|L_g(f)|&=\left|\int_{\mathbb R^n}f(x)g(x)dx\right|\\
&\le\sum_{j=1}^M\lambda_j\left|\int_{\mathbb R^n}a_j(x)\left[g(x)-P_{Q_j}g(x)\right]\right|\\
&\le\sum_{j=1}^M\lambda_j\|a_j\|_{L^q}\left(\int_{\mathbb R^n}|g(x)-P_{Q-j}g(x)|^{q'}\chi_{Q_j}dx\right)^{q'}\\
&=\sum_{j=1}^M|\lambda_j|{|Q_j|}
\left(\frac{1}{|Q_j|}\int_{Q_j}|g(x)-P_Qg(x)|^{q'}dx\right)^{q'}\\
&\le \left\|\sum_{i=1}^M\lambda_i\chi_{Q_i}\right\|_{L^{p(\cdot)}}\|g\|_{\widetilde{BMO}^{p(\cdot)}}\\
&\le \|f\|_{H_{fin}^{p(\cdot),q}}\|g\|_{\widetilde{BMO}^{p(\cdot)}}\sim  \|f\|_{H^{p(\cdot)}}\|g\|_{\widetilde{BMO}^{p(\cdot)}},
\end{align*}
which implies that $(1)$ holds true.
It remains to be proved $(2)$.
For any $M\in\mathbb N$, any cubes $Q_j\subset \mathbb R^n$, and non-negative numbers $\{\lambda_j\}_{j=1}^M$
with $\sum_{j=1}^M\lambda_j
\|\chi_{Q_j}\|_{L^{p(\cdot)}}\neq 0$,
let $f_j\in L^q(Q_j)$ with $\|f_j\|_{L^q(Q_j)}=1$
satisfying
$$
\left[\int_{Q_j}\left|g(x)-P_{Q_j}g(x)\right|^{q'}dx\right]^{1/{q'}}=
\int_{Q_j}\left[g(x)-P_{Q_j}g(x)\right]f_j(x)dx
$$
and, for any $x\in\mathbb R^n$, define
\begin{align*}
a_j(x)\equiv \frac{|Q_j|^{1/q}(f_j(x)-P_{Q_j}f_j(x))\chi_{Q_j}}{\|f_j-P_{Q_j}f_j(x)\|_{L^q(Q_j)}}.
\end{align*}
Then from the definition of the atom, it follows that $a_j$ is a special $(p(\cdot),q)-$atom
and $\sum_{j=1}^M\lambda_ja_j\in H^{p(\cdot)}$.
For any $L\in (H^{p(\cdot)})'$, repeating the similar argument to that used in the proof of Theorem
\ref{s5th1}, we find that there exists a unique 
$g\in\widetilde{BMO}^{p(\cdot)}$ such that 
$$L_g(f)=\int_{\mathbb R^n}f(x)g(x)dx.$$
In fact,
if $L\in (H^{p(\cdot)})'$, then we know that
$$L\left(\sum_{j=1}^M\lambda_ja_j\right)\le \|L\|
\left\|\sum_{j=1}^M\lambda_ja_j\right\|_{H^{p(\cdot)}}
\le \left\|L\right\|
\left\|\sum_{j=1}^M\lambda_j\chi_{Q_j}\right\|_{L^{p(\cdot)}}.$$
Hence, from this and the fact that $\|P_{Q_j}f_j(x)\|_{L^q}\le C\|f_j\|_{L^q},$ we conclude that
\begin{align*}
&\sum_{j=1}^M
{\lambda_j|Q_j|}\left(\frac{1}{|Q_j|}\int_{Q_j}|g(x)-P_{Q_j}g(x)|^{q'}dx\right)^{1/{q'}}\\
&= \sum_{j=1}^M
{\lambda_j|Q_j|^{1/q}}\int_{Q_j}\left[g(x)-P_{Q_j}g(x)\right]f_j(x)dx\\
&= \sum_{j=1}^M
{\lambda_j|Q_j|^{1/q}}\int_{Q_j}\left[f_j(x)-P_{Q_j}f_j(x)\right]g(x)\chi_{Q_j}dx\\
&\le C\sum_{j=1}^M
\lambda_j\int_{Q_j}a_j(x)g(x)dx\\
&\sim \sum_{j=1}^M\lambda_jL\left(a_j\right)
\sim L\left(\sum_{j=1}^M\lambda_ja_j\right)\\
&\le C\|L\|
\left\|\sum_{j=1}^M\lambda_j\chi_{Q_j}\right\|_{L^{p(\cdot)}},
\end{align*}
which implies that
$g\in\widetilde{BMO}^{p(\cdot)}$ with
$\|g\|_{\widetilde{BMO}^{p(\cdot)}}\le C\|L\|$.
Therefore, we have completed the proof of Theorem
\ref{s5th4}.
\end{proof}

From Theorem \ref{s5th1} and
Theorem \ref{s5th4}, by applying nearly identical method to the above proofs, we can deduce the duality
between $h^{p(\cdot)}(\mathbb R^n)$ and $\widetilde{bmo}^{p(\cdot)}(\mathbb R^n)$ with $0<p^-\le p^+<\infty$.
\begin{thm}\label{s5th4}
Suppose that $p(\cdot)\in LH\cap \mathcal P^0$, $(p^+\vee 1)<
q<\infty$. 
The dual space of $h^{p(\cdot)}$
is $\widetilde{bmo}^{p(\cdot)}$ in the following sense:\\
\noindent (1) Let $g\in \widetilde{bmo}^{p(\cdot)}$.
Then the linear functional $L_g: f\rightarrow l_g(f)=
\big<g,f\big>$,
defined initially on $h_{fin}^{p(\cdot),q}$ has a bounded extension to $h^{p(\cdot)}$ with $\|l_g\|\le C\|g\|_{\widetilde{bmo}^{p(\cdot)}}$.\\
\noindent (2) Conversely, for any $L\in(h^{p(\cdot)})'$, 
there exists a unique function $g\in\widetilde{bmo}^{p(\cdot)}$ such that 
$L_g(f)=\big<g,f\big>$ holds ture for all $f\in h_{fin}^{p(\cdot),q}$ with
$\|g\|_{\widetilde{bmo}^{p(\cdot)}}\le C\|L\|$.
\end{thm}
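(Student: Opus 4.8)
The plan is to deduce the asserted duality from the two results already in hand --- Theorem~\ref{s5th1} for $p^+\le 1$ and the $H^{p(\cdot)}$--$\widetilde{BMO}^{p(\cdot)}$ duality proved just above --- by repeating their proofs essentially verbatim. The only structural change is that the quasi-triangle inequality of Lemma~\ref{s2l4} must now be applied with exponent $p_-=p^-\wedge 1$ rather than with $p^-$ itself, which alters nothing in the estimates. The ingredients I would use are the finite atomic decomposition Theorem~\ref{s4th3}, the infinite atomic bound of Theorem~\ref{s4th1}, the density of $h_{fin}^{p(\cdot),q}$ in $h^{p(\cdot)}$ from Corollary~\ref{s4c1}, and the estimate $\|a\|_{h^{p(\cdot)}}\le C\|\chi_Q\|_{L^{p(\cdot)}}$ for special local $(p(\cdot),q)$-atoms established inside the proof of Theorem~\ref{s5th1}, which uses only Lemma~\ref{s2l7}, the $L^q$-boundedness of $M$, and the pointwise decay of $a\ast\varphi_j$ off $\tilde Q$; none of these needs $p^+\le 1$.

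For part~(1) I would take $g\in\widetilde{bmo}^{p(\cdot)}$ and a finite atomic decomposition $f=\sum_{j=1}^M\lambda_ja_j\in h_{fin}^{p(\cdot),q}$, with $a_j$ supported on a cube $Q_j$ and $\lambda_j\ge 0$, and split the sum according to $|Q_j|<1$ and $|Q_j|\ge 1$. On a small cube the moment condition gives $\int a_jg=\int a_j(g-P_{Q_j}g)$, so H\"older's inequality with $\|a_j\|_{L^q}\le|Q_j|^{1/q}$ bounds that part of $|\langle g,f\rangle|$ by $C\sum\lambda_j|Q_j|\big(\frac1{|Q_j|}\int_{Q_j}|g-P_{Q_j}g|^{q'}\big)^{1/q'}$, and on a large cube the same computation without polynomial subtraction gives $C\sum\lambda_j|Q_j|\big(\frac1{|Q_j|}\int_{Q_j}|g|^{q'}\big)^{1/q'}$. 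By the definition of $\|g\|_{\widetilde{bmo}^{p(\cdot)}}$ each sum is at most $\|\sum_j\lambda_j\chi_{Q_j}\|_{L^{p(\cdot)}}\|g\|_{\widetilde{bmo}^{p(\cdot)}}$; since this holds for every finite atomic decomposition of $f$, taking the infimum and applying Theorem~\ref{s4th3} bounds $|\langle g,f\rangle|$ by $C\|f\|_{h^{p(\cdot)}}\|g\|_{\widetilde{bmo}^{p(\cdot)}}$, and density extends $l_g$ uniquely.

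For part~(2), given $l\in(h^{p(\cdot)})'$ I would first invoke the atom estimate $\|a\|_{h^{p(\cdot)}}\le C\|\chi_Q\|_{L^{p(\cdot)}}$ so that $l$ acts boundedly on $L^q(Q)$ for each $Q$ with $\ell(Q)\ge 1$; then $L^q$--$L^{q'}$ duality yields Riesz representatives $g^Q\in L^{q'}(Q)$ which agree on overlaps by uniqueness of the $L^{q'}$-representative, and patching along an increasing sequence of cubes exhausting $\mathbb R^n$ produces a single $g\in L_{loc}$ with $l(f)=\int_{\mathbb R^n}fg$ for all $f\in h_{fin}^{p(\cdot),q}$, unique because the large-cube part of $\|\cdot\|_{\widetilde{bmo}^{p(\cdot)}}$ has no polynomial subtraction. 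To bound $\|g\|_{\widetilde{bmo}^{p(\cdot)}}$ I would fix an admissible finite family $\{Q_j\}_{j=1}^M,\{\lambda_j\}_{j=1}^M$ and, on each $Q_j$ with $|Q_j|<1$, choose $f_j\in L^q(Q_j)$ with $\|f_j\|_{L^q(Q_j)}=1$ realizing $\|g-P_{Q_j}g\|_{L^{q'}(Q_j)}=\int_{Q_j}(g-P_{Q_j}g)f_j$, and set $a_j=|Q_j|^{1/q}\|f_j-P_{Q_j}f_j\|_{L^q(Q_j)}^{-1}(f_j-P_{Q_j}f_j)\chi_{Q_j}$, which is a special local atom since $f_j-P_{Q_j}f_j$ is orthogonal to $\mathcal P_d$ on $Q_j$ and has $L^q(Q_j)$-norm at most $C$. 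Combining $\int a_jg=\int a_j(g-P_{Q_j}g)$, the orthogonality $g-P_{Q_j}g\perp\mathcal P_d$, and the choice of $f_j$ gives $\lambda_j|Q_j|\big(\frac1{|Q_j|}\int_{Q_j}|g-P_{Q_j}g|^{q'}\big)^{1/q'}\le C\lambda_j\,l(a_j)$, and summing and using Theorem~\ref{s4th1} bounds $\sum_j\lambda_j|Q_j|\big(\frac1{|Q_j|}\int_{Q_j}|g-P_{Q_j}g|^{q'}\big)^{1/q'}$ by $C\,l\big(\sum_j\lambda_ja_j\big)\le C\|l\|\,\|\sum_j\lambda_j\chi_{Q_j}\|_{L^{p(\cdot)}}$; dividing by $\|\sum_j\lambda_j\chi_{Q_j}\|_{L^{p(\cdot)}}$ and taking the supremum over all admissible families yields $\|g\|_{\widetilde{bmo}^{p(\cdot)}}\le C\|l\|$, the cubes $|Q_j|\ge 1$ being handled identically with $P_{Q_j}=0$.

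The step I expect to be the main obstacle is not a single estimate but the bookkeeping around the global $g$: assembling the local Riesz representatives into one function and checking that the two halves of $\|\cdot\|_{\widetilde{bmo}^{p(\cdot)}}$ --- with and without $P_Qg$ --- are matched precisely by the two families of test atoms, in particular that the cancellation built into small-cube atoms is exactly what legitimizes replacing $g$ by $g-P_{Q_j}g$ in each pairing and, simultaneously, what guarantees that each $a_j$ is a genuine atom. Once this is set up, the remainder is a transcription of the proofs of Theorem~\ref{s5th1} and of the $H^{p(\cdot)}$--$\widetilde{BMO}^{p(\cdot)}$ duality with $p^-$ replaced by $p_-$ wherever the quasi-triangle inequality enters.
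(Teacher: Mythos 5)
Your proposal is correct and follows exactly the route the paper itself indicates: the paper gives no standalone argument for this theorem but instead instructs the reader to ``apply nearly identical method'' to the proofs of Theorem~\ref{s5th1} (the $h^{p(\cdot)}$--$bmo^{p(\cdot)}$ duality for $p^+\le 1$) and of the preceding $H^{p(\cdot)}$--$\widetilde{BMO}^{p(\cdot)}$ theorem, which is precisely what you do, including the splitting of a finite atomic decomposition into small-cube and large-cube parts, the reuse of the pointwise atom estimate $\|a\|_{h^{p(\cdot)}}\le C\|\chi_Q\|_{L^{p(\cdot)}}$ (whose proof never invokes $p^+\le1$), the local $L^q$--$L^{q'}$ Riesz patching, and the H\"older-extremizer test atoms $a_j$ built from $f_j-P_{Q_j}f_j$. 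Your observation that the $\widetilde{bmo}^{p(\cdot)}$ normalization $\|\sum_i\lambda_i\chi_{Q_i}\|_{L^{p(\cdot)}}$ is exactly what replaces the modular inequality step $\sum_j\lambda_j\|\chi_{Q_j}\|_{L^{p(\cdot)}}\le C\|f\|_{h_{fin,*}^{p(\cdot),q}}$ (which genuinely needed $p^+\le1$) is the heart of why the theorem extends to $p^+>1$, so your account is a faithful expansion of the paper's one-line proof.
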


\section{Boundedness of some operators}

In this section, we will consider the boundedness of
the inhomogeneous Calder\'{o}n-Zygmund singular integrals and the local fractional integrals.
First we recall the inhomogeneous Calder\'{o}n-Zygmund singular integrals in \cite{M1,DHZ}. Precisely, the operator $T$ is said to be an inhomogeneous Calder\'{o}n-Zygmund singular integral if  $T$ is a continuous linear operator from $ \mathcal{D}$ to $ \mathcal{D}'$ defined by
$$\langle T(f),g\rangle=\int\mathcal{K}(x,y)f(y)g(x)dxdy$$ for all $f,g\in\mathcal{D}(\mathbb{R}^{n})$ with disjoint supports, where $\mathcal K(x,y),$ the kernel of $T,$ satisfies the conditions as follows.
\begin{eqnarray*}
|\mathcal{K}(x,y)|\leq C\min\left\{\frac{1}{|x-y|^{n}},\frac{1}{|x-y|^{n+\delta}}\right\},\ \hbox{for some }\delta>0 \ \hbox{and}\ \ x\neq y.
\end{eqnarray*}
and for $\epsilon\in(0,1)$
\begin{eqnarray*}
|\mathcal{K}(x,y)-\mathcal{K}(x,y')|+
|\mathcal{K}(y,x)-\mathcal{K}(y',x)|\leq 
C\frac{|y-y'|^{\epsilon}}{|x-y|^{n+\epsilon}},
\end{eqnarray*}
when $|y-y'|\le \frac{1}{2}|x-y|$.

The first result of this section is the following
\begin{thm}\label{s6th1}
Suppose that $p(\cdot)\in LH$ and $\{\frac{n}{n+\varepsilon}\vee\frac{n}{n+\delta}\}<p^-\le p^+<\infty.$ Let $T$ be an inhomogeneous Calder\'{o}n-Zygmund singular integral. If $T$ is a
bounded operator on $L^{2}$, then 
$T$ can be extended to an
$(h^{p(\cdot)}-L^{p(\cdot)})$ bounded operator.  
That is, there exists a constant $C$ such that
\begin{align*}
\|T(f)\|_{L^{p(\cdot)}}\le C\|f\|_{h^{p(\cdot)}}.
\end{align*}
\end{thm}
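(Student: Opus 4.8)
\textit{Proof proposal.}
The plan is to reduce the estimate to the action of $T$ on a single special local $(p(\cdot),q)$-atom and then to reassemble via the Grafakos--Kalton-type Lemma \ref{s2l6} together with the vector-valued maximal inequality Lemma \ref{s2l1}. First I would fix a finite $q$ with $(p^+\vee 1)<q<\infty$ and $q\ge 2$, so that special local $(p(\cdot),q)$-atoms lie in $L^q\subset L^2$ (they have compact support), so that $T$, being a Calder\'on--Zygmund operator bounded on $L^2$ with the stated size and H\"older-regularity kernel conditions, is also bounded on $L^q$, and so that Lemma \ref{s2l6} applies with this $q$. By Corollary \ref{s4c1} the subspace $h_{fin}^{p(\cdot),q}$ is dense in $h^{p(\cdot)}$, so it suffices to prove $\|Tf\|_{L^{p(\cdot)}}\le C\|\sum_j\lambda_j\chi_{Q_j}\|_{L^{p(\cdot)}}$ for finite combinations $f=\sum_{j=1}^M\lambda_j a_j$ of such atoms with supports $Q_j=Q(x_j,\ell_j)$, to take the infimum over decompositions using Theorem \ref{s4th3}, and then to extend $T$ to all of $h^{p(\cdot)}$ by continuity.

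The core is the pointwise bound
$$
|Ta_j(x)|\le C\,M(a_j)(x)\,\chi_{\tilde Q_j}(x)+C\,[M(\chi_{Q_j})(x)]^{\eta},\qquad \eta:=\frac{n+(\varepsilon\wedge\delta)}{n}.
$$
On $\tilde Q_j=2\sqrt n\,Q_j$ this is immediate from $|Ta_j|\le M(Ta_j)$ (used below in averaged form). Off $\tilde Q_j$ I would split according to the two regimes built into Definition \ref{s4d1}. When $|Q_j|<1$ the atom has the vanishing-moment property, so subtracting $\mathcal K(x,x_j)$ and using the H\"older regularity of the kernel yields $|Ta_j(x)|\lesssim \ell_j^{\varepsilon}|x-x_j|^{-n-\varepsilon}\|a_j\|_{L^1}\lesssim \ell_j^{n+\varepsilon}|x-x_j|^{-n-\varepsilon}\lesssim[M(\chi_{Q_j})(x)]^{(n+\varepsilon)/n}$. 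When $|Q_j|\ge 1$ there is no cancellation available, but $\ell_j\ge 1$ and $|x-x_j|\gtrsim\ell_j$ on $\tilde Q_j^c$, so the \emph{second} alternative $|\mathcal K(x,y)|\lesssim|x-y|^{-n-\delta}$ in the size condition, combined with $\|a_j\|_{L^1}\le|Q_j|$, gives $|Ta_j(x)|\lesssim|Q_j|\,|x-x_j|^{-n-\delta}=\ell_j^{n}|x-x_j|^{-n-\delta}\lesssim[M(\chi_{Q_j})(x)]^{(n+\delta)/n}$, where $\ell_j\ge 1$ is exactly what lets the extra factor $|x-x_j|^{-\delta}$ be absorbed. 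This is where the hypothesis enters: $\{\tfrac{n}{n+\varepsilon}\vee\tfrac{n}{n+\delta}\}<p^-$ is equivalent to $\eta p^->1$.

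With this in hand I would write $\|Tf\|_{L^{p(\cdot)}}\le\|\sum_j\lambda_j M(a_j)\chi_{\tilde Q_j}\|_{L^{p(\cdot)}}+\|\sum_j\lambda_j[M(\chi_{Q_j})]^{\eta}\|_{L^{p(\cdot)}}$. For the first term, Lemma \ref{s2l6} with $F_j=|Ta_j|$ and cubes $\tilde Q_j$ replaces it by $\|\sum_j\lambda_j(\tfrac1{|\tilde Q_j|}\int_{\tilde Q_j}|Ta_j|^q)^{1/q}\chi_{\tilde Q_j}\|_{L^{p(\cdot)}}$, and $L^q$-boundedness of $T$ with $\|a_j\|_{L^q}\le|Q_j|^{1/q}$ makes each average $\le C$; the remaining $\|\sum_j\lambda_j\chi_{\tilde Q_j}\|_{L^{p(\cdot)}}$ is controlled by $\|\sum_j\lambda_j\chi_{Q_j}\|_{L^{p(\cdot)}}$ through the device already used in the proof of Theorem \ref{s4th1}: choose $\tau$ with $\tau p^->1$, dominate $\chi_{\tilde Q_j}\lesssim M(\chi_{Q_j})$, and apply Lemma \ref{s2l1} on $L^{\tau p(\cdot)}$ with $\ell^{\tau}$. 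For the second term, putting $g_j=\lambda_j^{1/\eta}\chi_{Q_j}$ turns it into $\|\,\|\{Mg_j\}_j\|_{\ell^{\eta}}\,\|_{L^{\eta p(\cdot)}}^{\eta}$, which Lemma \ref{s2l1} (applicable since $\eta p^->1$ and $\eta>1$) bounds by $C\|\,\|\{g_j\}_j\|_{\ell^{\eta}}\,\|_{L^{\eta p(\cdot)}}^{\eta}=C\|\sum_j\lambda_j\chi_{Q_j}\|_{L^{p(\cdot)}}$. Taking the infimum over finite atomic decompositions and invoking Theorem \ref{s4th3} gives $\|Tf\|_{L^{p(\cdot)}}\le C\|f\|_{h^{p(\cdot)}}$ on $h_{fin}^{p(\cdot),q}$, and a continuity/density argument extends $T$ to $h^{p(\cdot)}$. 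I expect the main obstacle to be precisely the off-diagonal estimate for the \emph{large} cubes $|Q_j|\ge 1$: one cannot use moments there, and the required decay of order $\eta>1/p^-$ must be squeezed purely out of the improved size bound $|\mathcal K(x,y)|\lesssim|x-y|^{-n-\delta}$, which is exactly why both $\varepsilon$ and $\delta$ appear in the admissible range of $p^-$.
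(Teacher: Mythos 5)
Your proposal is correct and follows essentially the same route as the paper: the same pointwise bound $|Ta_j|\lesssim M(a_j)\chi_{\tilde Q_j}+[M(\chi_{Q_j})]^{\gamma}$ with the same two-regime argument off the double cube (H\"older regularity plus vanishing moments for $|Q_j|<1$, the improved size bound $|\mathcal K(x,y)|\lesssim|x-y|^{-n-\delta}$ together with $\ell_j\ge 1$ for $|Q_j|\ge 1$), Lemma~\ref{s2l6} for the near part, and the vector-valued maximal inequality Lemma~\ref{s2l1} in $L^{\gamma p(\cdot)}$ with $\ell^{\gamma}$ for the far part, where $\gamma=\frac{n+(\varepsilon\wedge\delta)}{n}$ and $\gamma p^->1$ is precisely the hypothesis. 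The only cosmetic difference is that you start from the finite atomic characterization (Theorem~\ref{s4th3}) and extend by density, whereas the paper invokes the infinite decomposition of Theorem~\ref{s4th2} together with density of $h^{p(\cdot)}\cap L^q$; these are interchangeable here.
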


\begin{proof}
Recalling the atomic decomposition of local Hardy space
$h^{p(\cdot)}$ in
Theorem \ref{s4th2}, we know that
if $f\in h^{p(\cdot)}$, then there exists non-negative coefficients  $\{\lambda_j\}_{j=1}^\infty$ and the 
special local $(p(\cdot),q)$-atoms $\{a_j\}_{j=1}^\infty$ 
such that
$f=\sum_{j=1}^\infty \lambda_ja_j$ in $h^{p(\cdot)}\cap L^q$, and that
\begin{equation*}
\left\|\sum_{j=1}^\infty\lambda_j\chi_{Q_j}\right\|_{L^{p(\cdot)}}
\le C\|f\|_{h^{p(\cdot)}}
\end{equation*}
for $0<q<\infty$. 
Then for $x\in\mathbb R^n$, we have
\begin{align*}
|T(f)(x)|&\le
\sum_{j}|\lambda_{j}|T(a_{j})(x)|
\chi_{\tilde Q_{j}}+\sum_{j}|\lambda_{j}||T(a_{j})(x)|
\chi_{\tilde Q_{j}^{c}}(x)\\
&=:I+II.
\end{align*}
To prove the theorem, it will
suffice to prove that
\begin{equation*}
\|T(f)\|_{L^{p(\cdot)}}\le
C\left\|\sum_{j=1}^\infty\lambda_j\chi_{Q_j}\right\|_{L^{p(\cdot)}}
\end{equation*}

First we need prove that
\begin{equation*}
\|I\|_{L^{p(\cdot)}}\le
C\left\|\sum_{j=1}^\infty\lambda_j\chi_{Q_j}\right\|_{L^{p(\cdot)}}.
\end{equation*}

Observe that $\mathcal K(x,y),$ the kernel of $T,$ satisfies the following conditions:
\begin{eqnarray*}
|\mathcal{K}(x,y)|\leq C\min\left\{\frac{1}{|x-y|^{n}},\frac{1}{|x-y|^{n+\delta}}\right\}
\le C\frac{1}{|x-y|^{n}},\ \hbox{for some }\delta>0 \ \hbox{and}\ \ x\neq y.
\end{eqnarray*}
and for $\epsilon\in(0,1)$
\begin{eqnarray*}
|\mathcal{K}(x,y)-\mathcal{K}(x,y')|+
|\mathcal{K}(y,x)-\mathcal{K}(y',x)|\leq 
C\frac{|y-y'|^{\epsilon}}{|x-y|^{n+\epsilon}},
\end{eqnarray*}
when $|y-y'|\le \frac{1}{2}|x-y|$.
Also, $T$ is a bounded operator on $L^{2}$.
From the Calder\'on-Zygmund real method
in \cite[Section 7.3]{M1}, we get
that $T$ is also bounded on $L^q$ for any $1<q<\infty$. 
Fix atoms $a_{j}$
supported in cubes $Q_{j}$.
For any $(p^+\vee1)<q<\infty$.
Then we have
\begin{align*}
  \left(\frac{1}{|Q_{j}|}\int_{Q_{j}}
  |T(a_{j})(x)|^{q}dx\right)^{1/{q}}
  \le \frac{1}{|Q_{j}|^{1/{q}}}\|a_{j}\|_{L^{q}}\le C.
\end{align*}

Applying Lemma \ref{s2l6}, we get that
\begin{align*}
\begin{split}
\|I\|_{L^{p(\cdot)}}
&\le\left\|\sum_{j}|\lambda_{j}||T(a_{j})|
\chi_{Q_{j}^\ast}\right\|_{L^{p(\cdot)}}\\
&\le C\left\|\sum_{j}|\lambda_{j}|\left(\frac{1}{|Q_{j}|}\int_{Q_{j}}
  |T(a_{j})|^{q}dx\right)^{1/{q}}\chi_{Q_{j}^\ast}\right\|_{L^{p(\cdot)}}\\
&\le C\left\|\sum_{j}\lambda_{j}
\chi_{Q_{j}^\ast}\right\|_{L^{p(\cdot)}}\le 
C\left\|\sum_{j}\lambda_{j}
\chi_{Q_{j}}\right\|_{L^{p(\cdot)}}.
\end{split}
\end{align*}
To estimate the term $II$,
we will divide in the following two case.

Case 1: $|Q_{j}|\le1$. In this case, $a_{j}$
satisfies the vanishing moment condition.
Noting that $x\in \tilde Q_j^c$ and $c_{Q_j}$
is the center of $Q_j$, 
we have $|x-c_{Q_j}|\ge 2|y-c_{Q_j}|$ and
$|y-c_{Q_j}|\le \ell(Q)$. By using the smooth condition of the kernel $\mathcal K$ we obtain that
\begin{align*}
  &|T(a_{j})(x)|
  =\left|\int_{Q_j}\mathcal K(x,y)a_{j}(y)dy\right|\\
\le& \int_{Q_j}|\mathcal K(x,y)-\mathcal K(x,c_{Q_j})|
|a_j(y)|d{y}\\
\le& C\int_{Q_j} \frac{|y-c_{Q_j}|^\epsilon}
{(|x-c_{Q_j}|)^{n+\epsilon}}|a_j(y)|dy
\\ \le &
C\|a_j\|_{L^\infty}
\frac{\ell(Q_{j})^{n+\epsilon}}{|x-c_{Q_j}|^{n+\epsilon}}\\\le&C
\left[\frac{\ell(Q_{j})^{n}}{|x-c_{Q_j}|^{n}}\right]^{\frac{n+\epsilon}{n}}
\sim (M(\chi_{Q_j})(x))^{\frac{n+\epsilon}{n}}.
\end{align*}

Case 2: $|Q_{j}|\ge1$.
In this case, we have $|x-y|\sim |x-c_{Q_j}|$ and 
$|x-y|\ge 1/2$ where $x\in \tilde Q_j^{c}$ and 
$y\in Q_j$.
By using the size condition of $K$, for any $x\in \tilde Q_j^{c}$ we obtain that
\begin{align*}
  &|T(a_{j})(x)|=\left|\int_{Q_j}K(x,y)a_{j}(y)dy\right|\\
\le & \int_{Q_j}|K(x,y)||a_j(y)|dy\\
\le & C\frac{|Q_j|}{|x-c_{Q_j}|^{n+\delta}}\\
\le & C\frac{\ell(Q_j)^{n+\delta}}{|x-c_{Q_j}|^{n+\delta}}\sim (M(\chi_{Q_j})(x))^{\frac{n+\delta}{n}}.
\end{align*}

We denote $\gamma=\{\frac{n+\epsilon}{n}\wedge\frac{n+\delta}{n}\}$. The condition
$\{\frac{n}{n+\varepsilon}\vee\frac{n}{n+\delta}\}<p^-$
means that $\gamma p^->1$.
Then we conclude that
\begin{align*}
&\|II\|_{L^{p(\cdot)}}
\le
C\Bigg\|\sum_{j}|\lambda_{j}|M^{\gamma}(\chi_{Q_j})\Bigg\|_{L^{p(\cdot)}}\\&\le
C\left\|\left(\sum_{j}
|\lambda_{j}|{M^{\gamma}(\chi_{Q_j})}\right)^{\frac{1}{\gamma}}
\right\|^\gamma_{L^{\gamma p(\cdot)}}
\leq C\left\|\sum_{j}\lambda_{j}
\chi_{Q_{j}}\right\|_{L^{p(\cdot)}}.
\end{align*}

Therefore, together with Remark \ref{s3r2} and a dense argument,
we finishes the proof of Theorem \ref{s6th1}.
\end{proof}

Next, we will establish mapping properties from variable local Hardy space $h^{p(\cdot)}$ into
itself for the inhomogeneous Calder\'{o}n-Zygmund singular integral $T$. To state it, we also need to assume one additional condition on $T$, $\int_{\mathbb R^n}T(a)(x)dx=0$ for the
special local $(p(\cdot),q)-$atoms $a$ and 
$\mbox{supp}\,a\subset Q$ with
$|Q|<1$. For convenience, we write $T^\ast_{loc}(1)=0$,
if $T$ satisfies the above moment condition.  
\begin{thm}\label{s6th2}
Suppose that $p(\cdot)\in LH$ and $\{\frac{n}{n+\varepsilon}\vee\frac{n}{n+\delta}\}<p^-\le p^+<\infty.$ Let $T$ be an inhomogeneous Calder\'{o}n-Zygmund singular integral. If $T$ is a
bounded operator on $L^{2}$ and $T^\ast_{loc}(1)=0$, then 
$T$ has a unique extension on $h^{p(\cdot)}$
and, moreover, there exists a constant $C$ such that
\begin{align*}
\|T(f)\|_{h^{p(\cdot)}}\le C\|f\|_{h^{p(\cdot)}},
\end{align*}
for all $f\in h^{p(\cdot)}$ .
\end{thm}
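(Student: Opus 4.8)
The plan is to bound $\|Tf\|_{h^{p(\cdot)}}$ first for $f$ in the dense subspace $h^{p(\cdot)}\cap L^q$, where $q$ is fixed with $(p^+\vee1)<q<\infty$, and then extend to all of $h^{p(\cdot)}$ by a routine density/continuity argument (using Remark \ref{s3r2}: $L^q\cap h^{p(\cdot)}$ is dense in $h^{p(\cdot)}$ and the $h^{p(\cdot)}$-norm dominates the $\mathcal D'$-topology). By Theorem \ref{s6th1} the distribution $Tf\in L^{p(\cdot)}\subset\mathcal D'$ is already well defined, so by Theorem \ref{s3th1} it suffices to estimate $\|\mathcal G_N^0(Tf)\|_{L^{p(\cdot)}}$; also, by the Calder\'on--Zygmund real-variable method, $T$ extends boundedly to $L^q$ for every $1<q<\infty$. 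For $f\in h^{p(\cdot)}\cap L^q$ I would invoke Theorem \ref{s4th2} to write $f=\sum_j\lambda_j a_j$ with special local $(p(\cdot),q)$-atoms $a_j$ supported in cubes $Q_j$ and $\|\sum_j\lambda_j\chi_{Q_j}\|_{L^{p(\cdot)}}\le C\|f\|_{h^{p(\cdot)}}$; since the series converges in $L^q$, hence in $\mathcal D'$, we get $\mathcal G_N^0(Tf)\le\sum_j\lambda_j\mathcal G_N^0(Ta_j)$, and for each $j$ we split $\mathcal G_N^0(Ta_j)=\mathcal G_N^0(Ta_j)\chi_{\tilde Q_j}+\mathcal G_N^0(Ta_j)\chi_{(\tilde Q_j)^c}$.

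For the local piece I would proceed exactly as with the term $I$ in the proof of Theorem \ref{s6th1}: since $\mathcal G_N^0(g)(x)\le CM(g)(x)$ for $g\in L^1_{loc}$ and $T$ is bounded on $L^q$, one has $\big(\tfrac1{|\tilde Q_j|}\int_{\tilde Q_j}|M(Ta_j)|^q\big)^{1/q}\le C|\tilde Q_j|^{-1/q}\|a_j\|_{L^q}\le C$, so Lemma \ref{s2l6} applied with the cubes $\tilde Q_j$, followed by $\chi_{\tilde Q_j}\le C\big(M(\chi_{Q_j})\big)^{\tau}$ and Lemma \ref{s2l1} with $\tau$ chosen so that $\tau p^->1$, gives $\big\|\sum_j\lambda_j\mathcal G_N^0(Ta_j)\chi_{\tilde Q_j}\big\|_{L^{p(\cdot)}}\le C\big\|\sum_j\lambda_j\chi_{Q_j}\big\|_{L^{p(\cdot)}}$.

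The heart of the matter is the off-diagonal estimate
$$
\mathcal G_N^0(Ta_j)(x)\le C\big(M(\chi_{Q_j})(x)\big)^{\gamma}\quad\text{for }x\in(\tilde Q_j)^c,\qquad \gamma:=\tfrac{n+\epsilon}{n}\wedge\tfrac{n+\delta}{n}>1.
$$
When $|Q_j|\ge1$ this follows from the size bound $|\mathcal K(x,y)|\le C|x-y|^{-n-\delta}$ alone, as in Case 2 of the proof of Theorem \ref{s6th1} (for $0<t<1$ the mollifier $\psi_t$ lives at a scale below $\ell(Q_j)$ and does not degrade the decay of $Ta_j$). When $|Q_j|<1$ this is where the extra hypothesis $T^\ast_{loc}(1)=0$ enters, i.e. $\int_{\mathbb R^n}Ta_j=0$: writing $\psi_t\ast Ta_j(x)=\int[\psi_t(x-z)-\psi_t(x-c_{Q_j})]\,Ta_j(z)\,dz$ and combining (i) the decay $|Ta_j(z)|\le C(\ell(Q_j)/|z-c_{Q_j}|)^{n+\epsilon}$ on $(\tilde Q_j)^c$, coming from the $\epsilon$-regularity of $\mathcal K$ together with the cancellation of $a_j$ (the Case 1 estimate of Theorem \ref{s6th1}), (ii) the size bound $\|Ta_j\|_{L^q}\le C|Q_j|^{1/q}$ near $Q_j$, and (iii) the derivative bound $|\psi_t(x-z)-\psi_t(x-c_{Q_j})|\le Ct^{-n-1}|z-c_{Q_j}|$, a short case analysis according to whether $t\lesssim|x-c_{Q_j}|$ or $t\gtrsim|x-c_{Q_j}|$ (so that $\tilde Q_j\subset B(x,Ct)$) yields the claim, using $\epsilon\in(0,1)$ and $\ell(Q_j)\le|x-c_{Q_j}|$ on $(\tilde Q_j)^c$. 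Summing then gives $\big\|\sum_j\lambda_j\mathcal G_N^0(Ta_j)\chi_{(\tilde Q_j)^c}\big\|_{L^{p(\cdot)}}\le C\big\|\sum_j\lambda_j\big(M(\chi_{Q_j})\big)^{\gamma}\big\|_{L^{p(\cdot)}}=C\big\|\big(\sum_j\lambda_j M^{\gamma}(\chi_{Q_j})\big)^{1/\gamma}\big\|^{\gamma}_{L^{\gamma p(\cdot)}}\le C\big\|\sum_j\lambda_j\chi_{Q_j}\big\|_{L^{p(\cdot)}}$ by Lemma \ref{s2l1}; here the hypothesis $\{\tfrac n{n+\epsilon}\vee\tfrac n{n+\delta}\}<p^-$, equivalently $\gamma p^->1$, is exactly what makes each term $\big(M(\chi_{Q_j})\big)^{\gamma}$, and the whole sum, belong to $L^{p(\cdot)}$. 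Adding the two pieces gives $\|Tf\|_{h^{p(\cdot)}}\le C\|f\|_{h^{p(\cdot)}}$ on $h^{p(\cdot)}\cap L^q$, and the density argument finishes the proof.

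The main obstacle is precisely this off-diagonal estimate for small cubes: in contrast with Theorem \ref{s6th1}, where only $\|Tf\|_{L^{p(\cdot)}}$ was needed and no cancellation of $T$ was used, controlling the full grand maximal function $\mathcal G_N^0(Ta_j)$ over all scales $t\in(0,1)$ has to be done with the single cancellation condition $\int Ta_j=0$ supplied by $T^\ast_{loc}(1)=0$, reconciled with a merely $\epsilon$-H\"older kernel and with the possibility $t>\ell(Q_j)$; this is what forces the region/scale decomposition above and fixes the sharp range $p^->\tfrac n{n+\epsilon}\vee\tfrac n{n+\delta}$. Every other step runs in parallel with the proof of Theorem \ref{s6th1}.
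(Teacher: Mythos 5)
Your proposal follows the same route as the paper's proof: atomic decomposition of $f\in h^{p(\cdot)}\cap L^q$, pointwise domination of $\mathcal G_N^0(Ta_j)$ by $M(Ta_j)\chi_{\tilde Q_j}$ plus $(M\chi_{Q_j})^\gamma$ with $\gamma=\frac{n+\epsilon}{n}\wedge\frac{n+\delta}{n}$, the same split of the off-diagonal estimate into $|Q_j|\ge1$ (size bound only), $|Q_j|<1$ with $t\lesssim|x-c_{Q_j}|$ (decay of $Ta_j$ only), and $|Q_j|<1$ with $t\gtrsim|x-c_{Q_j}|$ (where $\int Ta_j=0$ plus the mean-value theorem for $\psi_t$ are invoked), followed by Lemmas~\ref{s2l6} and \ref{s2l1} and a density argument. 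This matches the paper's argument step for step; your explicit remark that the effective domain of the $(\tilde Q_j)^c$-integral is $B(x,Ct)$ with $t<1$ and that $\epsilon\in(0,1)$ is needed there is in fact a welcome clarification of a point the paper leaves tacit.
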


\begin{proof}
By the argument similar to that used in the above proof, it will suffice to prove that, for $h^{p(\cdot)}\cap L^q$ with $(p^+\vee1)<q<\infty$,
\begin{align*}
\|\mathcal G_N^0T(f)\|_{L^{p(\cdot)}}
\leq C\left\|\sum_{j}\lambda_{j}
\chi_{Q_{j}}\right\|_{L^{p(\cdot)}}.
\end{align*}

We claim that for $x\in\mathbb R^n$, we have
\begin{align*}
\sup_{0<t<1}|t^{-n}\psi(t^{-1}\cdot)\ast T(f)(x)|\le
\sum_{j}\lambda_{j}\left(M(T(a_{j})(x)\chi_{2\sqrt{n}\tilde Q_j}(x)+(M(\chi_{Q_{j}})(x))^{\gamma}\right),
\end{align*}
where $\gamma=\{\frac{n+\epsilon}{n}\wedge
\frac{n+\delta}{n}\}$.
Assuming the claim for the moment and repeating
the nearly identical argument to the proof of
Theorem \ref{s6th1}, we can obtain the desired result.
To this end, we only need to show the claim.
When $x\in 2\sqrt{n}\tilde Q_j$, we just need the pointwise estimate $\mathcal G_N^0 T(f)(x)\le C\sum_{j}\lambda_jM(T(a_j)(x))$.
When $x\in (2\sqrt{n}\tilde Q_j)^c$, we will consider two cases: $|Q_j|<1$ and $|Q_j\ge1$.

Consider the case $|Q_j|\ge1$. In this case,
$\ell(Q_j)\ge1$, then we have
\begin{align*}
|\psi_t\ast T(a_j)(x)|&=\left|\int_{\mathbb R^n} \psi_t(x-y)T(a_j)(y)dy\right|\\
&\le t^{-n}\int_{B(x,t)} |T(a_j)(y)|dy\\
&\le \sup_{y\in B(x,t)}|T(a_j)(y)|.
\end{align*}
Noting that $0<t<1\le\ell(Q_j)$ and  $x\in(2\sqrt{n}\tilde Q_j)^c$, it implies that $y\in(\tilde Q)^c$.
From the proof of Theorem \ref{s6th1}, we conclude that
\begin{align*}
  &\sup_{y\in B(x,t)}|T(a_{j})(y)|\le C(M(\chi_{Q_j})(x))^{\gamma}.
\end{align*}

Consider the case $|Q_j|<1$. 
If $0<t\le |x-c_{Q_j}|/2$, then together with $x\in (2\sqrt{n}\tilde Q_j)^c$ we can get that
$y\in (\tilde Q)^c$. Thus, repeating the same argument as used above, then we have
\begin{align*}
|\psi_t\ast T(a_j)(x)|\le C(M(\chi_{Q_j})(x))^{\gamma}.
\end{align*}
Finally, we consider the case that $t>|x-c_{Q_j}|/2$.
Observing that $\ell(Q_j)<1$, it follows that $a_j$ has the vanishing moment condition and $T_{loc}(1)=0$.
We write $\eta\equiv(\epsilon\wedge\delta)$.
For any $x\in (2\sqrt{n}\tilde Q_j)^c$, then by using the mean value theorem together with H\"older's inequality yield that
\begin{align*}
|\psi_t\ast T(a_j)(x)|&=
\left|\int_{\mathbb R^n} \psi_t(x-y)T(a_j)(y)dy\right|\\
&=\left|\int_{\mathbb R^n} \bigg(\psi_t(x-y)-\psi_t(x-c_{Q_j})\bigg)T(a_j)(y)dy\right|\\
&\le t^{-n}\int_{\mathbb R^n} \left|\frac{y-c_{Q-j}}{t}\right||\psi'((x-c_{Q_j}+\theta(c_{Q_j}-y))/t)||T(a_j)(y)|dy\\
&\le C|{x-c_{Q_j}}|^{-n-1}\int_{\mathbb R^n}
|y-c_{Q_j}||T(a_j)(y)|dy\\
&\le C|{x-c_{Q_j}}|^{-n-1}\left(\int_{\tilde Q_j}
|y-c_{Q_j}||T(a_j)(y)|dy+\int_{(\tilde Q_j)^c}
|y-c_{Q_j}||T(a_j)(y)|dy\right)\\
&\le C|{x-c_{Q_j}}|^{-n-1}\left(\ell(Q_j)^{\frac{n}{s'}+1}\|T(a_j)\|_{L^s}+\int_{(\tilde Q_j)^c}
\frac{\ell(Q_j)^{n+\eta}}{|y-c_{Q_j}|^{n+\eta-1}}dy\right)\\&\le C|{x-c_{Q_j}}|^{-n-1}\ell(Q_j)^{n+1}
\end{align*}
where $\theta\in (0,1)$ and $s\in(1,\infty)$.

Therefore, we complete the proof of Theorem \ref{s6th2}.
\end{proof}

We now show that the local fractional integrals are bounded from $h^{p(\cdot)}$ to $L^{q(\cdot)}$ when
$q^->1$, and from $h^{p(\cdot)}$ to $h^{q(\cdot)}$ when
$q^-\le1$. The following local fractional integral is introduced by D. Yang and S. Yang \cite{YY}.
\begin{defn}
Let $\alpha\in [0,n)$ and let $\phi_0\in\mathcal D$
be such that $\phi_0\equiv1$ on $Q(0,1)$ and 
$\mbox{supp}\,(\phi_0)\subset Q(0,2)$. The local
fractional integral $I_\alpha^{loc}(f)$ of $f$ is defined by
$$
I_\alpha^{loc}(f)(x)\equiv\int_{\mathbb R^n}
\frac{\phi_0(y)}{|y|^{n-\alpha}}f(x-y)dy.
$$
\end{defn}

\begin{thm}\label{s6th3}
Let $n$ be the non-negative integers
and $0<\alpha<n$. Suppose that $p(\cdot)\in LH\cap \mathcal P^0$, and
$\frac{1}{q(x)}=\frac{1}{p(x)}-\frac{\alpha}{n}$
for any $x\in\mathbb R^n$.
Then $I^{loc}_\alpha$ admits a bounded extension from
$h^{p(\cdot)}$  to $L^{q(\cdot)}$ when when $1<q^-\le q^+<\infty$.
Furthermore, when $0<q^-\le q^+\le 1$, $I^{loc}_\alpha$ admits a bounded extension from
$h^{p(\cdot)}$  to $h^{q(\cdot)}$ .
\end{thm}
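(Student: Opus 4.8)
The plan is to combine the atomic decomposition of $h^{p(\cdot)}$ (Theorem~\ref{s4th2} and Corollary~\ref{s4c1}) with three facts about $I_\alpha^{loc}$, whose kernel I write as $K(y):=\phi_0(y)|y|^{\alpha-n}$. First, $0\le K(y)\le|y|^{\alpha-n}$, so $|I_\alpha^{loc}f|\le I_\alpha(|f|)$ and $I_\alpha^{loc}$ inherits the Hardy--Littlewood--Sobolev smoothing $L^{q_0}\to L^{q_1}$ with $1/q_1=1/q_0-\alpha/n$ whenever $1<q_0<n/\alpha$; moreover $K\in L^1$, so $I_\alpha^{loc}$ is bounded on $L^\infty$. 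Second, if $\mbox{supp}\,a\subset Q$ with $|Q|<1$ and $\int a(x)x^\beta\,dx=0$ for $|\beta|\le d$, then $I_\alpha^{loc}a$ also has vanishing moments up to order $d$, because expanding $x^\beta$ about the convolution variable produces only moments of $a$ of order $\le d$. Third, by Lemma~\ref{s2l7} and $1/q(\cdot)=1/p(\cdot)-\alpha/n$ one has $\|\chi_Q\|_{L^{q(\cdot)}}\sim|Q|^{\alpha/n}\|\chi_Q\|_{L^{p(\cdot)}}$ for all cubes. By Remark~\ref{s3r2} it suffices to treat $f\in h^{p(\cdot)}\cap L^{q_0}$, and I fix the integer $d$ of Definition~\ref{s4d1} large enough that $\gamma:=(n+d+1-\alpha)/n>1$ and decompose $f=\sum_j\lambda_ja_j$ into special local $(p(\cdot),\infty)$-atoms with $\|\sum_j\lambda_j\chi_{Q_j}\|_{L^{p(\cdot)}}\sim\|f\|_{h^{p(\cdot)}}$.

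The crux is the fractional coefficient inequality
\[
\left\|\sum_j\lambda_j|Q_j|^{\alpha/n}\chi_{Q_j}\right\|_{L^{q(\cdot)}}\le C\left\|\sum_j\lambda_j\chi_{Q_j}\right\|_{L^{p(\cdot)}}.
\]
I would deduce it from the pointwise bound $\sum_j\lambda_j|Q_j|^{\alpha/n}\chi_{Q_j}\lesssim I_\alpha\bigl(\sum_j\lambda_j\chi_{3Q_j}\bigr)$ (since $I_\alpha(\chi_{3Q})\gtrsim|Q|^{\alpha/n}$ on $Q$) together with the boundedness of $I_\alpha$ from $L^{p(\cdot)}$ to $L^{q(\cdot)}$; since $p^-\le 1$ can occur in both parts of the theorem, where that boundedness is not available directly, one instead passes through the weighted estimates for $I_\alpha$ and an off-diagonal version of the extrapolation of Lemma~\ref{s2l2}. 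I expect this inequality, and the $p^-\le 1$ regime generally, to be the main obstacle. Alongside it I shall use repeatedly the Fefferman--Stein power trick from the proof of Theorem~\ref{s4th1}, namely $\|\sum_j\mu_j(M\chi_{Q_j})^{\gamma}\|_{L^{r(\cdot)}}\lesssim\|\sum_j\mu_j\chi_{Q_j}\|_{L^{r(\cdot)}}$ for $\gamma>1$ (via the vector-valued maximal inequality Lemma~\ref{s2l1} on $L^{\gamma r(\cdot)}$), and the Grafakos--Kalton inequality Lemma~\ref{s2l6}.

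For $1<q^-\le q^+<\infty$ (target $L^{q(\cdot)}$), I split $I_\alpha^{loc}f=\sum_j\lambda_jI_\alpha^{loc}a_j\,\chi_{\widehat Q_j}+\sum_j\lambda_jI_\alpha^{loc}a_j\,\chi_{(\widehat Q_j)^c}=:\mathrm{I}+\mathrm{II}$, with $\widehat Q_j$ a bounded dilate of $Q_j$ chosen to contain $\mbox{supp}(I_\alpha^{loc}a_j)$ when $|Q_j|\ge 1$, so that $\mathrm{II}$ involves only cubes with $|Q_j|<1$. Choosing $q_1\in(q^+\vee 1,\infty)$ with $q_0:=(1/q_1+\alpha/n)^{-1}\in(1,n/\alpha)$, the $L^{q_1}$-average of $I_\alpha^{loc}a_j$ over $\widehat Q_j$ is at most $C|Q_j|^{-1/q_1}\|a_j\|_{L^{q_0}}\le C|Q_j|^{\alpha/n}$, so Lemma~\ref{s2l6} gives $\|\mathrm{I}\|_{L^{q(\cdot)}}\lesssim\|\sum_j\lambda_j|Q_j|^{\alpha/n}\chi_{\widehat Q_j}\|_{L^{q(\cdot)}}$, and $\chi_{\widehat Q_j}\lesssim(M\chi_{Q_j})^{\gamma}$ together with the power trick and the coefficient inequality finishes $\mathrm{I}$. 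For $\mathrm{II}$, using the moments of $a_j$ and $|D^{d+1}K(z)|\lesssim|z|^{\alpha-n-d-1}$ for $0<|z|\le 1$ (and $K\equiv0$ for $|z|\ge2$), a Taylor expansion of $K$ about $c_{Q_j}$ gives $|I_\alpha^{loc}a_j(x)|\chi_{(\widehat Q_j)^c}(x)\lesssim\ell(Q_j)^{n+d+1}|x-c_{Q_j}|^{\alpha-n-d-1}\sim|Q_j|^{\alpha/n}(M\chi_{Q_j}(x))^{\gamma}$ on the bounded support; the power trick and the coefficient inequality then give $\|\mathrm{II}\|_{L^{q(\cdot)}}\lesssim\|\sum_j\lambda_j\chi_{Q_j}\|_{L^{p(\cdot)}}\sim\|f\|_{h^{p(\cdot)}}$.

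For $0<q^-\le q^+\le 1$ (then $p^-<1$; target $h^{q(\cdot)}$), I use the grand-maximal characterization of Theorem~\ref{s3th1}: $\|I_\alpha^{loc}f\|_{h^{q(\cdot)}}\sim\|\mathcal G_N^0(I_\alpha^{loc}f)\|_{L^{q(\cdot)}}\le\|\sum_j\lambda_j\mathcal G_N^0(I_\alpha^{loc}a_j)\|_{L^{q(\cdot)}}$. Each $I_\alpha^{loc}a_j$ is, up to the factor $\lambda_j|Q_j|^{\alpha/n}$, a molecule centered at $Q_j$: supported in $Q_j+Q(0,2)$, with the $L^{q_1}$-size bound and the off-diagonal decay above, and---when $|Q_j|<1$---with vanishing moments up to order $d$. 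A standard molecule-into-atoms argument (a central $(q(\cdot),\infty)$-atom on $\widehat Q_j$, plus annular pieces corrected by polynomials to restore the moments, the corrections being summable because $\gamma>1$) rewrites $\sum_j\lambda_jI_\alpha^{loc}a_j=\sum_k\nu_kb_k$ with special local $(q(\cdot),\infty)$-atoms $b_k$ on cubes $R_k$ and $\|\sum_k\nu_k\chi_{R_k}\|_{L^{q(\cdot)}}\lesssim\|\sum_j\lambda_j|Q_j|^{\alpha/n}\chi_{Q_j}\|_{L^{q(\cdot)}}$; bounding $\mathcal G_N^0$ of the central part by $M(I_\alpha^{loc}a_j)$ and the tail by its decay, then applying Theorem~\ref{s4th1} with exponent $q(\cdot)$, the power trick and the coefficient inequality give $\|I_\alpha^{loc}f\|_{h^{q(\cdot)}}\lesssim\|\sum_j\lambda_j\chi_{Q_j}\|_{L^{p(\cdot)}}\sim\|f\|_{h^{p(\cdot)}}$. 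In both ranges a density argument (Remark~\ref{s3r2}) yields the asserted bounded extension, and the remaining difficulty is the coefficient inequality together with the bookkeeping of the molecular decomposition when $p^-\le1$.
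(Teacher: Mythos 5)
Your overall strategy mirrors the paper's: decompose $f$ into special local $(p(\cdot),q)$-atoms via Theorem~\ref{s4th2}, split by cube size (your $\widehat Q_j$-split is equivalent to the paper's $\ell(Q_j)\gtrless 1$ dichotomy, since for large cubes $\mbox{supp}(I_\alpha^{loc}a_j)$ lies in a bounded dilate of $Q_j$, so there is no tail term there), use Lemma~\ref{s2l6} for the on-support part, a moment/decay estimate for the tail, and the vector-valued maximal trick with exponent $\gamma=(n+d+1-\alpha)/n$. Incidentally you have silently corrected a typo: the paper's $r=(n+d+1-\alpha)/(n+1)$ should be $(n+d+1-\alpha)/n$, as your computation shows. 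Your observation that $I_\alpha^{loc}a$ inherits vanishing moments up to order $d$ (by binomial expansion, using $K\in L^1$ with compact support) is correct and essential for the $h^{q(\cdot)}$ target.

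The genuinely different step is how you propose to prove the fractional coefficient inequality
\[
\left\|\sum_j\lambda_j|Q_j|^{\alpha/n}\chi_{Q_j}\right\|_{L^{q(\cdot)}}\lesssim\left\|\sum_j\lambda_j\chi_{Q_j}\right\|_{L^{p(\cdot)}}.
\]
You suggest a pointwise domination by $I_\alpha$ of $\sum_j\lambda_j\chi_{3Q_j}$ followed by an off-diagonal extrapolation (which is not contained in Lemma~\ref{s2l2} as stated); you correctly flag that the direct $L^{p(\cdot)}\to L^{q(\cdot)}$ boundedness of $I_\alpha$ fails when $p^-\le1$. The paper instead cites Sawano's coefficient lemma (\cite[Lemma~5.2]{S}), which gives this inequality directly without routing through any operator; that is the cleaner and safer way, and it is available under exactly the hypotheses here. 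Your approach would work in principle (with a Cruz--Uribe--Martell--P\'erez type off-diagonal extrapolation from $A_\infty$), but it is more machinery than is needed, and you should be explicit that what you use is not Lemma~\ref{s2l2}. Similarly, for the $0<q^+\le1$ part the paper simply defers to \cite[Theorem~1.5]{CMN} and \cite[Proposition~3.1]{T21}; your molecular-into-atoms reduction is the standard way to carry that out and is in the same spirit, so this is filling in a citation rather than taking a different route.

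Two small technical points to tighten. First, when you choose $q_1$ so that $q_0=(1/q_1+\alpha/n)^{-1}\in(1,n/\alpha)$, you must take $q_1>\max\{q^+,\,n/(n-\alpha)\}$, not just $q_1>q^+\vee1$, otherwise $q_0$ need not be $>1$. Second, the power trick requires the auxiliary exponent $\tau$ with $1<\tau\le\gamma$ and $\tau q^->1$ (Lemma~\ref{s2l1} requires $\tau>1$), so the correct condition on $d$ is $\gamma>\max\{1,1/q^-\}$, not merely $\gamma>1$; since $d$ can be taken as large as desired in Definition~\ref{s4d1}, this is harmless, but it should be stated, especially in the $q^+\le1$ regime where $q^-<1$.
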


\begin{proof}
The proof of this theorem is similar to the proof of Theorems \ref{s6th1} and \ref{s6th2} and so we only need to concentrate on the differences.
First we consider the case when $1<q^-\le q^+<\infty$.
By the atomic decomposition of $h^{p(\cdot)}$ and a dense argument, in order to show $I^{loc}_\alpha$ admits a bounded extension from
$h^{p(\cdot)}$  to $L^{q(\cdot)}$,
we only need to prove that
$$
\bigg\|\sum_{j}\lambda_jI^{loc}_\alpha (a_j)\bigg\|_{L^{q(\cdot)}}\leq C\left\|\sum_{j}\lambda_{j}
\chi_{Q_{j}}\right\|_{L^{p(\cdot)}}.
$$
To prove it, we will condider two cases for $\ell(Q)$.

Case 1: $\ell(Q_j)> 1$.
In this case, from the definition of $I_\alpha^{loc}(a_j)$,
we  know that $$\mbox{supp}\,(I_\alpha^{loc}(a_j))
\subset Q_j(c_{Q_j}, \ell(Q_j)+4)\subset 10\ell(Q_j).$$
Then we have 
\begin{align*}
\begin{split}
\bigg\|\sum_{j}\lambda_jI^{loc}_\alpha (a_j)\bigg\|_{L^{q(\cdot)}}
&=\left\|\sum_{j}\lambda_{j}|I^{loc}_\alpha (a_j)|
\chi_{10Q_{j}}\right\|_{L^{q(\cdot)}}\\
&\le C\left\|\sum_{j}\lambda_{j}\left(\frac{1}{|Q_{j}|}\int_{Q_{j}}
  |I^{loc}_\alpha (a_j)|^{q}dx\right)^{1/{q}}\chi_{10 Q_{j}}\right\|_{L^{q(\cdot)}}\\
&\le C\left\|\sum_{j}\lambda_{j}\ell(Q_j)^\alpha
\chi_{10Q_{j}t}\right\|_{L^{q(\cdot)}}\le 
C\left\|\sum_{j}\lambda_{j}
\chi_{Q_{j}}\right\|_{L^{p(\cdot)}},
\end{split}
\end{align*}
where the second inequality follows from the boundedness of $I_\alpha^{loc}$ on classical Lebesgue spaces
(\cite[Lemma 8.9]{YY}) and the last inequality follows from \cite[Lemma 5.2]{S}.

Case 2: $\ell(Q_j)\le1$. For any special $(p(\cdot),q)-$atom $a_j$,
we have the following pointwise estimates:
\begin{align*}
|I^{loc}_\alpha (a_j)(x)|
\le C\frac{\ell(Q_j)^{n+d+1}}{(\ell(Q_j)+|x-c_{Q_j}|)^{n+d+1-\alpha}}\le C\ell(Q_j)^\alpha (M\chi_{Q_j}(x))^{r},
\end{align*}
where $r=\frac{n+d+1-\alpha}{n+1}$.
In fact, when $|x-c_{Q_j}|\le \ell(\tilde Q_j)$,
by using the size condition of $a_j$,
we obtain that
$$
|I_\alpha^{loc}(a_j)(x)|\le C\int_{Q_j}
\frac{1}{|x-y|^{n-\alpha}}|a_j(y)|dy\le C\ell(Q_j)^\alpha.
$$
Let $P_N(y)$ be the Taylor polynomial of degree $d$
of the kernel of $I_\alpha^{loc}$ centered at $c_{Q_j}$. When $|x-c_{Q_j}|> \ell(\tilde Q_j)$, by using the moment condition of $a_j$ and the Taylor
expansion theorem, we have  
\begin{align*}
|I_\alpha^{loc}(a_j)(x)|&\le 
C\int_{Q_j}
\left|\frac{1}{|x-y|^{n-\alpha}}-P_N(y)\right||a_j(y)|dy\\
&\le C\int_{ Q_j}
\frac{1}{|x-c_{Q_j}|^{n+d+1-\alpha}}|y-c_{Q_j}|^{d+1}dy\\
&\le C
\frac{{\ell(Q_j)}^{n+d+1}}{|x-c_{Q_j}|^{n+d+1-\alpha}}.
\end{align*}

Similarly, we have
\begin{align*}
\begin{split}
\bigg\|\sum_{j}\lambda_jI^{loc}_\alpha (a_j)\bigg\|_{L^{q(\cdot)}}
&\le C\left\|\sum_{j}\lambda_{j}\ell(Q_j)^\alpha (M\chi_{Q_j})^{r}\right\|_{L^{q(\cdot)}}\\
&\le C\left\|\sum_{j}\lambda_{j}\ell(Q_j)^\alpha
\chi_{Q_{j}}\right\|_{L^{q(\cdot)}}\le 
C\left\|\sum_{j}\lambda_{j}
\chi_{Q_{j}}\right\|_{L^{p(\cdot)}}.
\end{split}
\end{align*}

Thus, we have proved the first part of the theorem.
Now we consider the boundedness of
$I_\alpha^{loc}$ from $h^{p(\cdot)}$ to $h^{q(\cdot)}$. To end this, we need to prove that
$$
\bigg\|\sum_{j}\lambda_j\mathcal G_N^0(I^{loc}_\alpha (a_j))\bigg\|_{L^{q(\cdot)}}\leq C\left\|\sum_{j}\lambda_{j}
\chi_{Q_{j}}\right\|_{L^{p(\cdot)}}.
$$
Observe that when $\ell(Q_j)>1$, 
from the definition of $\mathcal G_N^0(I_\alpha^{loc}(a_j))$, we see that $$\mbox{supp}\,(I_\alpha^{loc}(a_j))
\subset Q_j(c_{Q_j}, \ell(Q_j)+8)\subset 20\ell(Q_j),$$
which yields that
$
\|I^{loc}_\alpha (f)\|_{L^{q(\cdot)}}\leq C\left\|\sum_{j}\lambda_{j}
\chi_{Q_{j}}\right\|_{L^{p(\cdot)}}.
$
When $\ell(Q_j)\le 1$,
we follow the same proof as in \cite[Theorem 1.5]{CMN}
(also see \cite[Proposition 3.1]{T21}),
then we can obtain the desired results.

Therefore,  the proof of Theorem \ref{s6th3} is complete.
\end{proof}

Finally, we present the boundedness of the inhomogeneous Calder\'on-Zygmund operators and
the local fractional integrals on the
duals of $h^{p(\cdot)}$.
Precisely, we can obtain that the inhomogeneous Calder\'on-Zygmund operators is bounded on
$bmo^{p(\cdot)}$
and
the local fractional integrals is bounded from
$bmo^{q(\cdot)}$ to
$bmo^{p(\cdot)}$ and bounded from 
$L^{q(\cdot)'}$ to $bmo^{p(\cdot)}$ under some conditions.

By Theorems \ref{s6th2}, \ref{s6th3}, Corollary \ref{s5c1} and
\cite[Proposition 3.2]{TWL}, we have the following results.
\begin{cor}\label{s6c1}
Suppose that $p(\cdot)\in LH$ and $\{\frac{n}{n+\varepsilon}\vee\frac{n}{n+\delta}\}<p^-\le p^+<\infty.$ Let $T$ be an inhomogeneous Calder\'{o}n-Zygmund singular integral. If $T$ is a
bounded operator on $L^{2}$ and $T_{loc}(1)=0$, then 
there exists a constant $C$ such that
\begin{align*}
\|T(f)\|_{\widetilde{bmo}^{p(\cdot)}}\le C\|f\|_{\widetilde{bmo}^{p(\cdot)}},
\end{align*}
for all $f\in \widetilde{bmo}^{p(\cdot)}$ .
\end{cor}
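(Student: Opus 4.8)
The plan is to deduce Corollary~\ref{s6c1} from the duality $\widetilde{bmo}^{p(\cdot)}=(h^{p(\cdot)})'$ established in Theorem~\ref{s5th4}, combined with the self-improvement of an inhomogeneous Calder\'on--Zygmund operator on $h^{p(\cdot)}$ from Theorem~\ref{s6th2}, applied not to $T$ itself but to its transpose. Write $T^{\mathrm t}$ for the transpose of $T$, i.e.\ the continuous linear operator $\mathcal D\to\mathcal D'$ with kernel $\mathcal K^{\mathrm t}(x,y)=\mathcal K(y,x)$. Because the size estimate and the two regularity estimates imposed on $\mathcal K$ are symmetric under interchanging the two variables, $\mathcal K^{\mathrm t}$ satisfies exactly the same hypotheses, so $T^{\mathrm t}$ is again an inhomogeneous Calder\'on--Zygmund singular integral; moreover $T^{\mathrm t}$ is bounded on $L^2$ since $T$ is. Finally, the hypothesis $T_{loc}(1)=0$ in the corollary is, by definition, the statement that $\int_{\mathbb R^n}T^{\mathrm t}(a)(x)\,dx=\langle T(1),a\rangle=0$ for every special local $(p(\cdot),q)$-atom $a$ supported in a cube of measure $<1$; in the notation introduced before Theorem~\ref{s6th2} this is precisely $(T^{\mathrm t})^{\ast}_{loc}(1)=0$.

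Consequently Theorem~\ref{s6th2}, applied with $T^{\mathrm t}$ in the role of $T$ (the range condition $\{\frac{n}{n+\varepsilon}\vee\frac{n}{n+\delta}\}<p^-\le p^+<\infty$ being unchanged), yields a constant $C$ with
\[
\bigl\|T^{\mathrm t}(f)\bigr\|_{h^{p(\cdot)}}\le C\,\|f\|_{h^{p(\cdot)}}\qquad\text{for all }f\in h^{p(\cdot)}.
\]
Now fix $g\in\widetilde{bmo}^{p(\cdot)}$. For $f\in h_{fin}^{p(\cdot),q}$ with $(p^+\vee1)<q<\infty$ set $\langle T(g),f\rangle:=\langle g,T^{\mathrm t}(f)\rangle$; this is legitimate since $T^{\mathrm t}(f)\in h^{p(\cdot)}$, and it is consistent with the a priori action of $T$ from $\mathcal D$ to $\mathcal D'$. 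Using Theorem~\ref{s5th4}(1) and the displayed bound,
\[
\bigl|\langle T(g),f\rangle\bigr|=\bigl|\langle g,T^{\mathrm t}(f)\rangle\bigr|\le\|g\|_{\widetilde{bmo}^{p(\cdot)}}\bigl\|T^{\mathrm t}(f)\bigr\|_{h^{p(\cdot)}}\le C\,\|g\|_{\widetilde{bmo}^{p(\cdot)}}\,\|f\|_{h^{p(\cdot)}},
\]
so $f\mapsto\langle T(g),f\rangle$ extends to a bounded linear functional on $h^{p(\cdot)}$ of norm at most $C\|g\|_{\widetilde{bmo}^{p(\cdot)}}$. By Theorem~\ref{s5th4}(2) there is a (unique, modulo the polynomial ambiguity inherent in $\widetilde{bmo}^{p(\cdot)}$) element of $\widetilde{bmo}^{p(\cdot)}$ representing this functional, with norm controlled by $C\|g\|_{\widetilde{bmo}^{p(\cdot)}}$; identifying it with $T(g)$ gives $\|T(g)\|_{\widetilde{bmo}^{p(\cdot)}}\le C\,\|g\|_{\widetilde{bmo}^{p(\cdot)}}$, which is the assertion. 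Alternatively one may test directly against the finite normalised sums $\sum_{j=1}^M\lambda_j a_j$ entering the definition of $\|\cdot\|_{\widetilde{bmo}^{p(\cdot)}}$ and invoke Corollary~\ref{s5c1} together with \cite[Proposition~3.2]{TWL} to pass among the equivalent Campanato, Lipschitz and Carleson-measure descriptions of the dual.

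The quantitative estimates here are inherited wholesale from Theorems~\ref{s6th2} and~\ref{s5th4}, so the only genuinely delicate point is the bookkeeping of well-definedness: verifying that $T(g)$ makes sense for an arbitrary $g\in\widetilde{bmo}^{p(\cdot)}$ (which is only a locally integrable function, defined up to polynomials of degree $d$), that the duality prescription $\langle T(g),f\rangle=\langle g,T^{\mathrm t}(f)\rangle$ agrees with the distributional action of $T$ on $\mathcal D$, and---above all---that the hypothesis $T_{loc}(1)=0$ does encode exactly the cancellation $\int T^{\mathrm t}(a)=0$ needed to apply Theorem~\ref{s6th2} to $T^{\mathrm t}$. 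Once these identifications are pinned down, the norm inequality is immediate.
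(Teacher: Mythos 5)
Your proposal is correct and follows essentially the same route the paper intends: the paper simply cites Theorems~\ref{s6th2}, \ref{s6th3}, Corollary~\ref{s5c1} and \cite[Proposition~3.2]{TWL} without writing out the argument, and what that amounts to is exactly your duality transfer---apply the $h^{p(\cdot)}$-boundedness to the transpose $T^{\mathrm t}$ (whose kernel inherits the same size and smoothness bounds, with the hypothesis $T_{loc}(1)=0$ translating into $(T^{\mathrm t})^{\ast}_{loc}(1)=0$) and then pass through $(h^{p(\cdot)})'=\widetilde{bmo}^{p(\cdot)}$ from Theorem~\ref{s5th4}. The one small mismatch worth flagging is that the paper's cited Corollary~\ref{s5c1} concerns the range $p^{+}\le 1$, while the present corollary allows $p^{+}<\infty$; the duality statement actually needed over the full range is the second Theorem~\ref{s5th4} for $\widetilde{bmo}^{p(\cdot)}$, which is what you use---so your reference is in fact the more accurate one.
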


\begin{cor}\label{s6c2}
Let $n$ be the non-negative integers
and $0<\alpha<n$. Suppose that $p(\cdot)\in LH$,
$0<p^-\le p^+\le 1$ and
$\frac{1}{q(x)}=\frac{1}{p(x)}-\frac{\alpha}{n}$
for any $x\in\mathbb R^n$.
For $0<q^-\le q^+\le1$, then
there exists a constant $C$ such that
\begin{align*}
\|I^{loc}_\alpha(f)\|_{bmo^{p(\cdot)}}\le C\|f\|_{bmo^{q(\cdot)}},
\end{align*}
for all $f\in bmo^{q(\cdot)}$ .
For $p^+<1<q^-\le q^+<\infty$, 
there exists a constant $C$ such that
\begin{align*}
\|I^{loc}_\alpha(f)\|_{bmo^{p(\cdot)}}\le C\|f\|_{L^{q'(\cdot)}},
\end{align*}
for all $f\in L^{q'(\cdot)}$ .
\end{cor}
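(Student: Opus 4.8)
\textit{Proof proposal.} The plan is to derive both inequalities by duality, after observing that the formal transpose of $I^{loc}_\alpha$ is again a local fractional integral of the same kind. Writing $I^{loc}_\alpha f=k\ast f$ with $k(y)=\phi_0(y)|y|^{\alpha-n}$, one has $k\in L^1(\mathbb R^n)$ with compact support because $0<\alpha<n$, so $I^{loc}_\alpha f$ is a well-defined element of $L^1_{loc}$ whenever $f\in L^1_{loc}$; in particular this holds for $f\in bmo^{q(\cdot)}$ (which lies in $L_{loc}$ by definition) and for $f\in L^{q'(\cdot)}$ (which embeds into $L^1_{loc}$ since $q'\in\mathcal P$ when $1<q^-\le q^+<\infty$). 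A change of variables and Fubini give, for $g\in\mathcal D$,
\begin{align*}
\int_{\mathbb R^n}I^{loc}_\alpha f(x)\,g(x)\,dx=\int_{\mathbb R^n}f(y)\,\widetilde I^{loc}_\alpha g(y)\,dy,\qquad
\widetilde I^{loc}_\alpha g(y)=\int_{\mathbb R^n}\frac{\phi_0(z-y)}{|z-y|^{n-\alpha}}\,g(z)\,dz,
\end{align*}
where $\widetilde I^{loc}_\alpha$ is the local fractional integral associated with the bump $\widetilde\phi_0(x)=\phi_0(-x)$, which again satisfies $\widetilde\phi_0\equiv1$ on $Q(0,1)$ and $\mbox{supp}\,\widetilde\phi_0\subset Q(0,2)$. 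Hence Theorem \ref{s6th3} applies verbatim to $\widetilde I^{loc}_\alpha$, and the identity above extends to all $g\in h_{fin}^{p(\cdot),q}$ since such $g$ lie in $L^q$ and $I^{loc}_\alpha f\in L^1_{loc}$.

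For the first assertion, assume $0<q^-\le q^+\le1$, fix $f\in bmo^{q(\cdot)}$, and test $I^{loc}_\alpha f$ against an arbitrary $g\in h_{fin}^{p(\cdot),q}$. By Theorem \ref{s6th3} (for $\widetilde I^{loc}_\alpha$) we have $\widetilde I^{loc}_\alpha g\in h^{q(\cdot)}$ with $\|\widetilde I^{loc}_\alpha g\|_{h^{q(\cdot)}}\le C\|g\|_{h^{p(\cdot)}}$, and then Theorem \ref{s5th1}(1) — the pairing $(h^{q(\cdot)})'=bmo^{q(\cdot)}$, available because $q^+\le1$ — yields
\begin{align*}
\left|\int_{\mathbb R^n}I^{loc}_\alpha f(x)\,g(x)\,dx\right|
=\left|\langle f,\widetilde I^{loc}_\alpha g\rangle\right|
\le C\|f\|_{bmo^{q(\cdot)}}\,\|\widetilde I^{loc}_\alpha g\|_{h^{q(\cdot)}}
\le C\|f\|_{bmo^{q(\cdot)}}\,\|g\|_{h^{p(\cdot)}}.
\end{align*}
Thus $g\mapsto\int (I^{loc}_\alpha f)\,g$ is a bounded linear functional on the dense (Corollary \ref{s4c1}) subspace $h_{fin}^{p(\cdot),q}$ of $h^{p(\cdot)}$, of norm at most $C\|f\|_{bmo^{q(\cdot)}}$; by Theorem \ref{s5th1}(2) (using $p^+\le1$) it is represented by a unique $g_0\in bmo^{p(\cdot)}$ with $\|g_0\|_{bmo^{p(\cdot)}}\le C\|f\|_{bmo^{q(\cdot)}}$, and identifying $g_0$ with $I^{loc}_\alpha f$ (see below) gives the claim. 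The second assertion, with $p^+<1<q^-\le q^+<\infty$, is identical: Theorem \ref{s6th3} now produces $\widetilde I^{loc}_\alpha g\in L^{q(\cdot)}$ with $\|\widetilde I^{loc}_\alpha g\|_{L^{q(\cdot)}}\le C\|g\|_{h^{p(\cdot)}}$, and the generalized H\"older inequality (Lemma \ref{s2l3}) with the pair $(q',q)$ replaces the $h$–$bmo$ duality on the source side, giving $|\int (I^{loc}_\alpha f)g|\le C\|f\|_{L^{q'(\cdot)}}\|g\|_{h^{p(\cdot)}}$ and hence $\|I^{loc}_\alpha f\|_{bmo^{p(\cdot)}}\le C\|f\|_{L^{q'(\cdot)}}$. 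Alternatively, both conclusions are exactly the content of \cite[Proposition 3.2]{TWL} combined with Theorem \ref{s6th3} and Corollary \ref{s5c1}, which package this transpose-and-dualize mechanism.

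The main obstacle is the step ``identifying $g_0$ with $I^{loc}_\alpha f$'': Theorem \ref{s5th1}(2) only furnishes an abstract Riesz-type representative, and one must check it coincides a.e.\ with the honestly defined function $I^{loc}_\alpha f\in L^1_{loc}$. This is where the structure of $h^{p(\cdot)}$ enters: since atoms supported on cubes of sidelength in $[1,2)$ carry no moment condition, every $\mathcal D$-function (indeed every $L^q$-function) supported in a cube of sidelength $\ge1$ is, up to normalization, a finite combination of special local $(p(\cdot),q)$-atoms, so $g_0$ and $I^{loc}_\alpha f$ integrate identically against all such test functions; because $g_0-I^{loc}_\alpha f\in L^{q'}_{loc}$ and the $|Q|\ge1$ part of the $bmo^{p(\cdot)}$ norm pins down the representative as a genuine function rather than a coset modulo polynomials, this forces $g_0=I^{loc}_\alpha f$ a.e. The remaining point — that $\widetilde\phi_0$ satisfies the hypotheses of Theorem \ref{s6th3} — is immediate from the symmetry of $Q(0,1)$ and $Q(0,2)$ about the origin and requires no computation.
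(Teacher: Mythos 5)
Your proposal is correct and follows exactly the duality-plus-transpose mechanism the paper itself invokes (the paper provides no written proof for this corollary, only a citation to Theorems \ref{s6th2}, \ref{s6th3}, Corollary \ref{s5c1}, and \cite[Proposition 3.2]{TWL}). The observation that $\widetilde I^{loc}_\alpha$ is again a local fractional integral with the reflected bump $\widetilde\phi_0$, and that Theorem \ref{s6th3} then applies to it, is precisely what makes the transpose argument go through; your flagging of the identification of the abstract Riesz representative $g_0$ with the pointwise-defined $I^{loc}_\alpha f$ is the right delicate point, and your resolution — using that compactly supported $L^q$ functions on cubes of sidelength $\ge1$ are (multiples of) single special local atoms, so $g_0-I^{loc}_\alpha f$ annihilates a dense class, and that the $|Q|\ge1$ part of $\|\cdot\|_{bmo^{p(\cdot)}}$ makes it a genuine norm rather than a seminorm modulo polynomials — is sound. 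One small bookkeeping remark: for the intermediate inequality $|\langle f,\widetilde I^{loc}_\alpha g\rangle|\le C\|f\|_{bmo^{q(\cdot)}}\|\widetilde I^{loc}_\alpha g\|_{h^{q(\cdot)}}$ to follow from Theorem \ref{s5th1}(1), one should note that $\widetilde I^{loc}_\alpha g$ is itself a compactly supported $L^{\tilde q}$ function, hence lies in $h_{fin}^{q(\cdot),\tilde q}$ (as a multiple of a single large-cube atom), so $l_f$ evaluated on it is the honest integral and the bound $\|l_f\|\le C\|f\|_{bmo^{q(\cdot)}}$ applies.
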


\section*{Acknowledgements}
The project is
sponsored by National Natural Science Foundation of China (No. 11901309), Natural Science Foundation of Jiangsu Province of China (No. BK20180734), Natural Science Research of Jiangsu Higher Education Institutions of China (No. 18KJB110022) and Nanjing University of Posts and Telecommunications Science Foundation (Nos. NY219114, NY217151).

\bibliographystyle{amsplain}

\end{document}